\newcommand{\fref}[1]{\hyperref[{#1}]{\ref*{#1}}}
\newcommand{\Ab}{\mathbb{A}}
\newcommand{\Bb}{\mathbb{B}}
\newcommand{\Hb}{\mathbb{H}}
\newcommand{\Lb}{\mathbb{L}}
\newcommand{\Pb}{\mathbb{P}}
\newcommand{\Qb}{\mathbb{Q}}
\newcommand{\Zb}{\mathbb{Z}}
\newcommand{\Ac}{\mathcal{A}}
\newcommand{\Ec}{\mathcal{E}}
\newcommand{\Fc}{\mathcal{F}}
\newcommand{\Mc}{\mathcal{M}}
\newcommand{\Nc}{\mathcal{N}}
\newcommand{\Oc}{\mathcal{O}}
\newcommand{\Rc}{\mathcal{R}}
\newcommand{\Sc}{\mathcal{S}}
\newcommand{\Ls}{\mathscr{L}}
\newcommand{\Ms}{\mathscr{M}}
\newcommand{\Ps}{\mathscr{P}}
\newcommand{\PCob}{{\underline\Omega}}
\newcommand{\op}{\mathrm{op}}
\newcommand{\coim}{\mathrm{\coim}}
\newcommand{\Hom}{\mathrm{Hom}}
\newcommand{\Spec}{\mathrm{Spec}}
\newcommand{\Fl}{\mathrm{Fl}}
\newcommand{\bl}{\mathrm{Bl}}
\newcommand{\Td}{\mathrm{Td}}
\newcommand{\wtil}{\widetilde}
\newcommand{\colim}{\mathrm{colim}}
\newtheorem{theo}{Tplottin ubuntuheorem}[section]
\theoremstyle{plain}
\newtheorem{thm}[theo]{Theorem}
\newtheorem{lem}[theo]{Lemma}
\newtheorem{prop}[theo]{Proposition}
\newtheorem{cor}[theo]{Corollary}
\newtheorem*{thm*}{Theorem}
\newtheorem*{lem*}{Lemma}
\newtheorem*{prop*}{Proposition}
\newtheorem*{cor*}{Corollary}
\theoremstyle{definition}
\newtheorem{defn}[theo]{Definition}
\newtheorem{ex}[theo]{Example}
\theoremstyle{remark}
\newtheorem{rem}[theo]{Remark}
\newtheorem{cons}[theo]{Construction}
\newtheorem{war}[theo]{Warning}
\title{Chern classes in precobordism theories}
\author{Toni Annala}
\newcommand{\Addresses}{{
  \bigskip
  \footnotesize

  Toni Annala, \textsc{Department of Mathematics, University of British Columbia,
    Vancouver, BC V6T1Z2 Canada}\par\nopagebreak
  \textit{E-mail address:} \texttt{tannala@math.ubc.ca}

}}
\date{}
\begin{document}

\maketitle

\begin{abstract}
We construct Chern classes of vector bundles in the universal precobordism theory of Annala--Yokura over an arbitrary Noetherian base ring of finite Krull dimension. As an immediate corollary of this, we show that the Grothendieck ring of vector bundles can be recovered from the universal precobordism ring, and that we can construct candidates for Chow rings satisfying an analogue of the classical Grothendieck--Riemann--Roch theorem. We also strengthen the weak projective bundle formula of Annala--Yokura to work for arbitrary projective bundles.
\end{abstract}

\tableofcontents

\section{Introduction}

The theory of \emph{algebraic cobordism} is supposed to be the finest possible cohomology theory in algebraic geometry satisfying certain restrictions, but constructing such a theory in general has proven to be a hard problem. The importance of this problem can be highlighted by the following observation: a satisfactory theory of algebraic cobordism rings will also yield a satisfactory theory of Chow rings. This means that if we can understand, for example, how to define algebraic cobordism for singular schemes, then we should also understand how to define the Chow rings for singular schemes. 

The purpose of this work is to pursue one approach, based on derived algebraic geometry, that should yield candidates of algebraic cobordism in great generality. Namely, given a Noetherian base ring $A$ of finite Krull dimension, we consider the \emph{universal precobordism rings} $\PCob^*(X)$ for quasi-projective derived $A$-schemes $X$ introduced in \cite{AY}. We show that these rings satisfy many properties expected from a good theory of algebraic cobordism.

\subsection*{Summary of results}
We start by Section \fref{FGLSect}, which deals with the formal group law of the universal precobordism theory. The main result of Section \fref{FGLSubSect} is Theorem \fref{FGLTheorem} showing that there is a formal group law $F(x,y)$ so that given line bundles $\Ls_1$ and $\Ls_2$ on $X$, we get the equation
$$e(\Ls_1 \otimes \Ls_2) = F\bigl(e(\Ls_1), e(\Ls_2) \bigr) \in \PCob^1(X)$$
on Euler classes. This strengthens the analogous result of \cite{AY} where the line bundles were required to be globally generated. This result allows us to give in Theorem \fref{UniversalPropOfPreCob} a simple universal property for $\PCob^*$: it is the \emph{universal oriented cohomology theory} (see Definition \fref{OrientedCohomDef}). The definition of an oriented cohomology theory is a rather simple one: in addition to desirable functorial properties, the only additional restrictions concern Euler classes of line bundles.

Section \fref{ChernClassConsSect} is the technical heart of the paper. In Section \fref{ChernClassConsSubSect} we construct \emph{Chern classes}
$$c_i(E) \in \PCob^i(X)$$
of a vector bundle $E$ on $X$. This is done by carefully applying derived blow ups of Khan--Rydh \cite{Khan} to the derived schemes of interest (Construction \fref{ChernClassCons}). In Section \fref{ChernPropertiesSubSect} we begin the study of basic properties of Chern classes. The first main result of this section is Theorem \fref{ChernProperties}, which shows that Chern classes are natural in pullbacks and that the top Chern class coincides with the Euler class. The second is Theorem \fref{ProjectiveBundleChernClass}, which shows that, given a rank $r$ vector bundle $E$ on $X$,
$$\sum_{i=0}^r (-1)^i c_{r-i}(E^\vee) \bullet c_1(\Ls)^i = 0 \in \PCob^*(\Pb(E)).$$
This formula will play an important role in Section \fref{ApplicationsSect}. We then prove the validity of the \emph{splitting principle} in Section \fref{SplittingPrincipleSubSect} (Theorem \fref{SplittingPrinciple}), and as easy corollaries we can deduce Theorems \fref{NilpChernClass} and \fref{WSF} showing respectively that Chern classes are nilpotent and that Whitney sum formula holds.  

In Section \fref{ApplicationsSect} we focus on applications of the results obtained in the previous sections. We begin with Section \fref{CFSubSect}, whose main result is Theorem \fref{GeneralCF}: if $\Zb_m$ is the integers considered as an $\Lb$-algebra via the map that classifies the multiplicative formal group law, then we get a natural isomorphism of rings
$$\PCob^*(X) \otimes_\Lb \Zb_m \cong K^0(X).$$
In characteristic 0, this is a slight generalization of the corresponding result in \cite{An} (since we have fewer relations), but over a more general ground ring $A$, the result is completely new. In Section \fref{RRSubSect}, we construct candidates for Chow rings by the formula
$$\PCob^*_a(X) := \Zb_a \otimes_\Lb \PCob^*(X),$$
where $\Zb_a$ is the integers, again considered as an $\Lb$-algebra, but this time via the map classifying the additive formal group law. In Theorem \fref{GeneralRR}, we show that this new theory $\PCob^*_a$ satisfies an analogue of the classical Grothendieck--Riemann--Roch theorem.

The main result of Section \fref{PBFSubSect} --- Theorem \fref{PBF} --- shows that if $E$ is a vector bundle of rank $r$ on $X$, then we have a natural isomorphism
$$\PCob^*(\Pb(E)) \cong \PCob^*(X)[t] / \langle f(t) \rangle,$$
where $t = c_1(\Oc(1))$ and
$$f(t) := \sum_{i=0}^r (-1)^i c_{r-i}(E^\vee) \bullet c_1(\Ls)^i.$$
The proof uses in an essential way the computation of precobordism with line bundles $\PCob^{*,1}$ from \cite{AY}. Note that the result strengthens both Theorem \fref{ProjectiveBundleChernClass} and Corollary 6.24 of \cite{AY}. We then end by quickly showing that the arguments generalize to arbitrary bivariant precobordism theories $\Bb^*$ (Theorem \fref{BivariantPBF}).

\subsection*{Related work}

Algebraic cobordism was originally introduced by Voevodsky in his proof of the Milnor conjecture. In order to understan the theory better, Levine and Morel in their foundational work (see \cite{LM}) gave a geometric construction for the algebraic cobordism rings $\Omega^*(X)$ of smooth varieties $X$ over a field of characteristic 0. In \cite{An}, the author extended the construction of the cobordism rings to arbitrary quasi-projective derived schemes under the same characteristic 0 assumption. Moreover, the cobordism rings were also shown to be a part of a larger \emph{bivariant theory}, still denoted by $\Omega^*$. In this paper, we will largely ignore all the bivariant theoretic aspects, since it would require us to introduce the bivariant formalism.

This paper builds upon the earlier paper \cite{AY} where precobordism theories over a Noetherian base ring of finite Krull dimension were first introduced. For us, the most important results from this work are Theorems 6.12 and 6.13, expressing the structure of the bivariant precobordism with line bundles $\Bb^{*,1}$ in terms of the original precobordism theory $\Bb^*$. Also the variants $\Bb^{*,1}_\mathrm{gl}$ and $\Bb^*_{\Pb^\infty},$ and their relationship with $\Bb^{*,1}$ will play an important role. 

There has been considerable interest of studying cobordism theories coming from motivic homotopy theory, see for example \cite{De, EHKY}. At this moment, their relationship with the work pursued here remains poorly understood. Note that since the theories constructed do not have homotopy invariance in general (since $K^0$ does not), the two approaches can not give the same result in general. Investigating possible connections is left for future work.

\subsection*{Acknowledgements}
The author would thank his advisor Kalle Karu for multiple discussions. The author was supported by the Vilho, Yrj\"o and Kalle V\"ais\"al\"a Foundation of the Finnish Academy of Science and Letters. 

\subsection*{Conventions}
Our derived $A$-schemes are locally modeled by simplicial $A$-algebras. If a diagram is said to commute, it is to be understood that it commutes coherently up to homotopy. An \emph{inclusion} or \emph{embedding} of vector bundles $E \hookrightarrow F$ is a morphism whose dual $F^\vee \to E^\vee$ is surjective (which is to say, it is surjective on $\pi_0$).

\section{Background}

In this section, we are going to recall definitions and results that are necessary for the main part of the paper. As a rule, only results that are new are proven.

\subsection{Projective bundles}

Let $X$ be a derived scheme and let $E$ be a vector bundle on $X$. We define the \emph{projective bundle} $\pi: \Pb_X(E) \to X$ as the derived $X$-scheme so that given an $X$-scheme $Y$, the space of morphisms $Y \to \Pb_X(E)$ is canonically equivalent to the space ($\infty$-groupoid) of surjections
$$E^\vee \vert_Y \to \Ls^\vee$$
where $\Ls$ is a line bundle on $Y$. It is known that $\Pb_X(E)$ exists and that the structure morphism $\pi$ is smooth. Moreover, the universal property induces a canonical surjection $E^\vee \to \Oc_{\Pb_X(E)}(1)$ to the \emph{anticanonical line bundle} of $\Pb_X(E)$. The derived pushforward to $X$ of the canonical surjection is equivalent to the identity morphism of $E$. If it should cause no confusion, we will often denote the projective bundle simply by $\Pb(E)$ and the anticanonical line bundle by $\Oc(1)$.

Note that the whole discussion of the last paragraph can be dualized so that $\Pb(E)$ represents embeddings $\Ls \hookrightarrow E \vert_Y$, and so that there is a canonical embedding $\Oc(-1) \hookrightarrow E$ on $\Pb(E)$. It is then easy to see that given an embedding $E \hookrightarrow F$ of vector bundles on $X$, we get the induced \emph{linear embedding} $i: \Pb(E) \hookrightarrow \Pb(F)$. It is known that $i$ is a quasi-smooth embedding of virtual codimension $\mathrm{rank}(F) - \mathrm{rank}(E)$. For the details, the reader may consult Section 2.8 of \cite{AY}.

\begin{war}
Note that there are two conventions for projective bundles so that what we call here $\Pb(E)$, might somewhere else be called $\Pb(E^\vee)$. Our convention is closer to the one standard in intersection theory.
\end{war}

\subsection{Derived vanishing loci}

Given a global section $s$ of a vector bundle $E$ of rank $r$ on a derived scheme $X$, we can form the $X$-scheme called \emph{derived vanishing locus} $i: V(s) \hookrightarrow X$ of $s$ in $X$ as the homotopy Cartesian product
\begin{equation}
\begin{tikzcd}
V(s) \arrow[->]{r}{i} \arrow[->]{d}{} & X \arrow[]{d}{} \\
X \arrow[->]{r}{0} & E
\end{tikzcd}
\end{equation}

$$
\CD
V(s) @>{i}>> X \\
@VVV @V{s}VV \\
X @>{0}>> E
\endCD
$$ 
Note that $i$ is a quasi-smooth embedding of virtual codimension $r$. By the defining property of homotopy Cartesian products, $V(s)/X$ has another universal property: namely, the space of morphisms over $X$ from $Y$ to $V(s)$ is canonically equivalent to the space of paths $s \vert_Y \sim 0$ in the space of global sections $\Gamma(Y; E \vert_Y)$. This allows us to make the following observation, recorded as a lemma.

\begin{lem}\label{PreferredPathLemma} In the derived vanishing locus $i: V(s) \hookrightarrow X$, there is a canonical path $\alpha: i^* s \sim 0$, which is natural under pullbacks of the data $(E,s)$ in the obvious way.
\end{lem}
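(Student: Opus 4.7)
The plan is to extract the path $\alpha$ directly from the universal property of $V(s)$ recalled immediately above the lemma. Applied to $Y = V(s)$ itself, equipped with the tautological $X$-structure $i: V(s) \to X$, that universal property yields a canonical equivalence between the space of $X$-morphisms $V(s) \to V(s)$ and the space of paths from $i^*s$ to $0$ in the space of global sections $\Gamma(V(s); i^*E)$. I would simply \emph{define} $\alpha$ to be the path corresponding to $\mathrm{id}_{V(s)}$ under this equivalence. Nothing more is needed to produce the path.

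For the naturality statement, let $f: X' \to X$ be an arbitrary morphism of derived schemes and consider the pulled-back data $(f^*E, f^*s)$ on $X'$. Stability of homotopy Cartesian squares under pasting gives a canonical equivalence $V(f^*s) \simeq X' \times_X V(s)$, and hence a comparison morphism $g: V(f^*s) \to V(s)$ lying over $f$. The universal property we invoked is itself functorial in the base scheme, i.e.\ the equivalence of mapping spaces that it sets up is compatible with precomposition and pullback of sections. Consequently the path $\alpha'$ on $V(f^*s)$ corresponding to $\mathrm{id}_{V(f^*s)}$ is identified with the path corresponding to $g = g \circ \mathrm{id}_{V(f^*s)}$ on the morphism side, which in turn corresponds under the equivalence to the pullback of $\alpha$ along $g$ on the path side.

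The only point requiring care — and not really an obstacle, but worth spelling out — is this bookkeeping: one must verify that the two procedures, namely (i) form the canonical path on $V(f^*s)$ from scratch, and (ii) pull back the canonical path on $V(s)$ along $g$, produce the same element in the path space. But this is a formal consequence of the functoriality of the mapping-space equivalence that defines the universal property, together with the fact that a homotopy pullback of a homotopy pullback is computed as the composite pullback. I do not anticipate any genuine difficulty beyond unwinding definitions, so the proof should be very short.
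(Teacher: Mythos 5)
Your proposal is correct and is essentially identical to the paper's own (very brief) proof: both define $\alpha$ as the path corresponding to $\mathrm{id}_{V(s)}$ under the universal property of the derived vanishing locus, with naturality following formally from the compatibility of that universal property with base change. You simply spell out the naturality bookkeeping that the paper dismisses as trivial.
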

\begin{proof}
Indeed $\alpha$ is the path corresponding to the identity morphism $V(s) \to V(s)$ over $X$. The other claims follow trivially.
\end{proof}

Our next goal is to prove a kind of a transitivity result for derived vanishing loci (Proposition \fref{VanishingLociInExtensions}). 

\begin{cons}\label{SectionLift}
Suppose now $X$ is a derived scheme, and 
\begin{equation}\label{ExactSeq1}
0 \to E' \to E \to E'' \to 0
\end{equation}
is a short exact sequence of vector bundles. If $s$ is a global section of $E$, then it maps to a global section $s''$ of $E'$; denote the derived vanishing locus of $s''$ by $Z_1$. The pullback of (\fref{ExactSeq1}) induces a (homotopy) fibre sequence 
\begin{equation}\label{GSFib}
\Gamma(Z_1; E') \to \Gamma(Z_1; E) \to \Gamma(Z_1; E'')
\end{equation}
of spaces of global sections. The natural path $\alpha: s''\vert_{Z_1} \sim 0$ in $\Gamma(Z_1; E'')$ given by Lemma \fref{PreferredPathLemma} allows us to lift $s$ to a natural element (unique up to homotopy) $s' \in \Gamma(Z_1; E')$; denote the derived vanishing locus of $s'$ by $Z_2$. The following proposition should be thought as some kind of transitivity result for derived vanishing loci. Notice how this construction is natural under pullbacks of the data $(E, E', E'', s)$
\end{cons}

\begin{prop}\label{VanishingLociInExtensions}
Let $X, s$ and $Z_2$ be as in Construction \fref{SectionLift}. Then there exists an equivalence
$$Z_2 \simeq Z,$$
where $Z$ is the derived vanishing locus of $s$ in $X$, which is natural under pullbacks of the data $(E, E', E'', s)$
\end{prop}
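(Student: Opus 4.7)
The plan is to check that $Z$ and $Z_2$ corepresent the same functor on derived $X$-schemes in a natural way, so that Yoneda gives the desired natural equivalence. Throughout, let $f : Y \to X$ be an arbitrary derived $X$-scheme.

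First I would unwind the universal properties. By the defining property of $V(s)$ recalled in Lemma \fref{PreferredPathLemma}, maps $Y \to Z$ over $X$ are canonically equivalent to paths $\beta : f^*s \sim 0$ in the space $\Gamma(Y; f^*E)$. A map $Y \to Z_2$ over $X$ is, by definition, the data of (i) a map $g : Y \to Z_1$ over $X$, i.e. a path $\beta'' : f^*s'' \sim 0$ in $\Gamma(Y; f^*E'')$, together with (ii) a path $\beta' : g^*s' \sim 0$ in $\Gamma(Y; g^*E') = \Gamma(Y; f^*E')$. By the naturality clause of Construction \fref{SectionLift}, $g^*s'$ is exactly the lift of $f^*s$ to $\Gamma(Y; f^*E')$ produced from the path $\beta''$ via the fibre sequence
$$\Gamma(Y; f^*E') \to \Gamma(Y; f^*E) \to \Gamma(Y; f^*E'').$$

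Next, I would explain why the pair $(\beta'', \beta')$ is naturally the same datum as a single path $\beta : f^*s \sim 0$. Pulling back the short exact sequence to $Y$ still yields the above fibre sequence of global section spaces, because the short exact sequence is a fibre/cofibre sequence of vector bundles and $\Gamma(Y; -)$ preserves fibre sequences. Applying the functor $\mathrm{Map}(\Delta^1, -)$ to this fibre sequence and taking fibres over the basepoint $(f^*s, 0) \in \Gamma(Y; f^*E)^2$ (resp.\ $(f^*s'', 0) \in \Gamma(Y; f^*E'')^2$) yields a fibre sequence of path spaces
$$\mathrm{Path}_{\Gamma(Y; f^*E')}(\widetilde{s}, 0) \to \mathrm{Path}_{\Gamma(Y; f^*E)}(f^*s, 0) \to \mathrm{Path}_{\Gamma(Y; f^*E'')}(f^*s'', 0),$$
where for each $\beta''$ in the rightmost space the fibre is computed using the lift $\widetilde{s}$ determined by $\beta''$ — which is precisely $g^*s'$. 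This fibre sequence furnishes, naturally in $Y$, the equivalence between paths $\beta$ and pairs $(\beta'', \beta')$ described above.

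Combining the two paragraphs gives an equivalence $\mathrm{Map}_{/X}(Y, Z) \simeq \mathrm{Map}_{/X}(Y, Z_2)$ natural in $Y$, hence $Z \simeq Z_2$ over $X$. Naturality under pullbacks of the data $(E, E', E'', s)$ is immediate: $V(-)$ is functorial in the pair (bundle, section), Construction \fref{SectionLift} is natural by design, and the translation between paths in $\Gamma(Y; f^*E)$ and pairs of paths is induced by a fibre sequence of global sections which is itself natural in the short exact sequence.

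The main obstacle is the second paragraph — verifying that the fibre sequence of sections really does produce a fibre sequence of path spaces that identifies endpoint-to-endpoint paths upstairs with compatible pairs of such paths. This is a standard $(\infty,1)$-categorical manipulation (paths are fibres of $\mathrm{ev}_0 \times \mathrm{ev}_1 : B^{\Delta^1} \to B \times B$, and this formation commutes with homotopy limits), but one must be careful that the resulting lift matches the one selected by Construction \fref{SectionLift} so that the equivalence is compatible with the way $s'$ is defined; this compatibility follows from the canonicity of the lift produced by the fibre sequence, which is exactly what Construction \fref{SectionLift} records.
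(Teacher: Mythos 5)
Your proposal is correct, but it takes a genuinely different route from the paper. The paper first \emph{constructs} a comparison map $\psi: Z_2 \to Z$ by concatenating the transport path $\wtil\alpha: s\vert_{Z_2} \sim s'\vert_{Z_2}$ with the tautological path $\beta: s'\vert_{Z_2} \sim 0$, and then proves $\psi$ is an equivalence by checking Zariski-locally: one reduces to the split trivial sequence $\Oc_X^{\oplus n} \to \Oc_X^{\oplus n+m} \to \Oc_X^{\oplus m}$, where the statement becomes the transitivity of derived intersections $V(s_1,\dots,s_n) \cap V(s_{n+1},\dots,s_{n+m}) \simeq V(s_1,\dots,s_{n+m})$. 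You instead never leave the functor of points: you identify $\mathrm{Map}_{/X}(Y,Z_2)$ and $\mathrm{Map}_{/X}(Y,Z)$ directly as total spaces over $\mathrm{Path}_{\Gamma(Y;f^*E'')}(f^*s'',0)$ with fibres the path spaces in $\Gamma(Y;f^*E')$, using the standard fact that path spaces of a fibre sequence again form a fibre sequence with fibres computed via transport. Your approach buys an intrinsic argument with no local trivialization and makes the naturality statements essentially automatic, at the cost of the $\infty$-categorical bookkeeping you flag: to compare the two total spaces one still has to produce an actual map over the base realizing the fibrewise identifications, and when you unwind what that map is, it is exactly the paper's concatenation $\wtil\alpha * \beta$, so the two proofs share their comparison map and differ only in how the equivalence is verified. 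The paper's local reduction is the more elementary fallback if one does not want to justify the path-space fibre sequence in detail; both are complete once their respective ``standard'' inputs are granted.
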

\begin{proof}
In order to get a morphism $Z_2 \to Z$, we need to find a path $s \vert_{Z_2} \sim 0$ in $\Gamma(Z_2; E)$. But this is easy: there is a natural path $\alpha: s'' \vert_{Z_2} \sim 0$ in $\Gamma(Z_2; E'')$ which lifts to a natural path $\wtil \alpha: s\vert_{Z_2} \sim s'\vert_{Z_2}$ (this is essentially how $s'$ was constructed). Recalling that $Z_2$ was defined as the vanishing locus of $s'$, we obtain a natural path $\beta: s'\vert_{Z_2} \sim 0$ in $\Gamma(Z_2; E')$, which maps to give a natural path $s'\vert_{Z_2} \sim 0$ in $\Gamma(Z_2; E)$ denoted abusively by $\beta$. The desired natural path $s\vert_{Z_2} \sim 0$ is then given by the composition of $\wtil \alpha$ and $\beta$, and we get a natural map $\psi: Z_2 \to Z$.

We are left to show that $\psi$ is an equivalence, which can checked locally on $X$. We can therefore reduce to the case where all the vector bundles are trivial, and that the sequence (\fref{ExactSeq1}) is the standard split exact sequence 
$$\Oc_X^{\oplus n} \to \Oc_X^{\oplus n + m} \to \Oc_X^{\oplus m}.$$
In this situation $s$ corresponds to an $(n+m)$-tuple $(s_1,...,s_{n+m})$ of functions of $X$, $s''$ corresponds to $(s_{n+1},...,s_{n+m})$ and $s'$ corresponds to $(s_1,...,s_{n})$, and the equivalence follows from the equivalence of the derived intersection $V(s_1,...,s_n) \cap V(s_{n+1},...,s_{n+m})$ with $V(s_1,...,s_{n+m})$.
\end{proof}

The following result shows that all linear embeddings of projective bundles are vanishing loci of sections of vector bundles.

\begin{prop}\label{LinearEmbeddingIsVanishingLocus}
Let $X$ be a derived scheme, and let 
\begin{equation}\label{LEVLSeq}
0 \to E' \xrightarrow{i} E \xrightarrow{f} E'' \to 0
\end{equation}
be an exact sequence of vector bundles on $X$. Consider the vector bundle $E(1)$ on $\Pb(E)$ and the section $s \in \Gamma(\Pb(E); E(1))$ corresponding via the natural identifications 
$$\Gamma(\Pb(E); E(1)) \simeq \Gamma(X; E^\vee \otimes E) \simeq \Hom_X(E,E)$$ to the identity morphism. Then the derived vanishing locus of $f_*(s) \in \Gamma(\Pb(E); E''(1))$ is equivalent to the linear embedding $\Pb(E') \hookrightarrow \Pb(E)$ over $\Pb(E)$.
\end{prop}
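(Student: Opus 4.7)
The plan is to check that both $V(f_*(s))$ and the linear embedding $\Pb(E') \hookrightarrow \Pb(E)$ represent the same functor on derived $\Pb(E)$-schemes, and to exhibit a natural comparison map between them using the canonical path of Lemma \fref{PreferredPathLemma}.

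First I would unpack the two universal properties. A morphism $Y \to \Pb(E)$ over $X$ corresponds to an embedding $\Ls \hookrightarrow E \vert_Y$ of a line bundle, and under the identification $\Gamma(\Pb(E); E(1)) \simeq \Hom(\Oc(-1), E)$ the section $s$ is the tautological embedding $\Oc(-1) \hookrightarrow E$. Consequently $f_*(s)$ corresponds to the composition $\Oc(-1) \xrightarrow{s} E \xrightarrow{f} E''$. By the defining homotopy Cartesian square of the derived vanishing locus, a morphism $Y \to V(f_*(s))$ over $\Pb(E)$ is the same datum as a morphism $\varphi: Y \to \Pb(E)$ together with a nullhomotopy $\alpha: \varphi^* f_*(s) \sim 0$ in $\Gamma(Y; \varphi^* E''(1))$; equivalently, a line subbundle $\Ls \hookrightarrow E \vert_Y$ together with a nullhomotopy of $f \circ (\Ls \hookrightarrow E \vert_Y)$ in $\Hom(\Ls, E'' \vert_Y)$.

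Next I would construct the comparison morphism $\Pb(E') \to V(f_*(s))$ over $\Pb(E)$. The linear embedding $\Pb(E') \hookrightarrow \Pb(E)$ corresponds to sending a line subbundle $\Ls \hookrightarrow E' \vert_Y$ to its composition with $E' \hookrightarrow E$. Since (\fref{LEVLSeq}) is a fibre sequence, there is a canonical nullhomotopy of $f \circ i : E' \to E''$, which pulled back along $\Ls \hookrightarrow E' \vert_Y$ gives the required nullhomotopy of $f \circ (\Ls \hookrightarrow E \vert_Y)$. By the universal property of the derived vanishing locus this produces the desired morphism $\psi: \Pb(E') \to V(f_*(s))$ over $\Pb(E)$, and by construction it is natural under pullbacks in the data.

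To check that $\psi$ is an equivalence, I would invoke the fibre sequence $E' \to E \xrightarrow{f} E''$ once more: given a line bundle $\Ls$ on $Y$, the space of pairs consisting of a map $\Ls \to E \vert_Y$ and a nullhomotopy of its composition to $E'' \vert_Y$ is canonically equivalent to the space of maps $\Ls \to E' \vert_Y$. It remains to verify that such a lift $\Ls \to E' \vert_Y$ is automatically an embedding whenever the original $\Ls \to E \vert_Y$ is; this is the only nonformal point. But the dual $E^\vee \vert_Y \to \Ls^\vee$ is surjective on $\pi_0$ by assumption and factors through $E'^\vee \vert_Y \to \Ls^\vee$ via the surjection in the dual sequence $0 \to E''^\vee \vert_Y \to E^\vee \vert_Y \to E'^\vee \vert_Y \to 0$, so $E'^\vee \vert_Y \to \Ls^\vee$ is also surjective on $\pi_0$. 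The main obstacle in the whole argument is this last surjectivity check; everything else is a formal manipulation of homotopy limits. Once it is in place, $\psi$ is an equivalence of functors on $\Pb(E)$-schemes, hence an equivalence of derived schemes, natural in the data $(E, E', E'', s)$.
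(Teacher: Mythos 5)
Your proof is correct, and its first half --- identifying $s$ with the tautological embedding $\Oc(-1) \hookrightarrow E$ and producing the comparison map $\psi: \Pb(E') \to V(f_*(s))$ over $\Pb(E)$ from the canonical nullhomotopy of $f \circ i$ supplied by exactness of (\fref{LEVLSeq}) --- is exactly what the paper does, just phrased via functors of points rather than via the paper's diagram of identifications $\Gamma(\Pb(E); E(1)) \simeq \Hom_X(E,E)$ and its companions. Where you genuinely diverge is in showing $\psi$ is an equivalence. The paper argues geometrically: $\Pb(E')$ and $V(f_*(s))$ are quasi-smooth embeddings of the same virtual codimension in $\Pb(E)$, so after a local check that $V(f_*(s))$ is smooth over $X$, the map $\psi$ is a quasi-smooth embedding of virtual codimension $0$ and hence an equivalence. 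You instead verify the universal property directly: the fibre sequence $\Hom(\Ls, E'\vert_Y) \to \Hom(\Ls, E\vert_Y) \to \Hom(\Ls, E''\vert_Y)$ identifies a map $u: \Ls \to E\vert_Y$ together with a nullhomotopy of $f \circ u$ with a lift $v: \Ls \to E'\vert_Y$, and you check that the embedding conditions on the two sides correspond because $\pi_0$-surjectivity of $E^\vee\vert_Y \to \Ls^\vee$ descends through the surjection $E^\vee\vert_Y \to E'^\vee\vert_Y$. You should also state the converse direction explicitly (that $v$ an embedding forces $i \circ v$ to be one, by composing two $\pi_0$-surjections), since $\psi$ itself needs it to be well defined, and you should note that ``being an embedding'' is a condition on path components of the mapping space, so the fibre-sequence equivalence restricts to the relevant components. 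With those small additions your route buys a proof that is entirely formal and local-computation-free, at the level of functors of points; the paper's route avoids that bookkeeping but delegates a local smoothness verification it declines to write out. Both are sound.
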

\begin{proof}
Consider the diagram
\begin{equation*}
\begin{tikzcd}
\Gamma(\Pb(E), E(1)) \arrow[->]{r}{f_*} \arrow[->]{d}{\simeq} & \Gamma(\Pb(E), E''(1)) \arrow[->]{r}{i^*} \arrow[->]{d}{\simeq} & \Gamma(\Pb(E'), E''(1)) \arrow[->]{d}{\simeq} \\
\Gamma(X, E^\vee \otimes E) \arrow[->]{r}{(1 \otimes f)_*} \arrow[->]{d}{\simeq} & \Gamma(X, E^\vee \otimes E'') \arrow[->]{r}{(i^\vee \otimes 1)_*} \arrow[->]{d}{\simeq} & \Gamma(X, E'^\vee \otimes E'') \arrow[->]{d}{\simeq} \\
\Hom_X(E, E) \arrow[->]{r}{f \circ} & \Hom_X(E, E'') \arrow[->]{r}{\circ i} & \Hom_X(E', E'').
\end{tikzcd}
\end{equation*}
It is clear that the two squares on the left hand side commute. To see why the upper right square must commute, we recall that the linear embedding $i: \Pb(E') \hookrightarrow \Pb(E)$ is induced by the surjection
$$E^\vee \xrightarrow{i^\vee} E'^\vee \to \Oc(1)$$
on $\Pb(E)$, proving the commutativity of 
\begin{equation*}
\begin{tikzcd}
\Gamma(\Pb(E); \Oc(1)) \arrow[->]{r}{i^*} \arrow[->]{d}{\simeq} & \Gamma(\Pb(E'); \Oc(1)) \arrow[->]{d}{\simeq} \\
\Gamma(X; E^\vee) \arrow[->]{r}{i^\vee_*} & \Gamma(X; E'^\vee).
\end{tikzcd}
\end{equation*}
The commutativity of the bottom right square follows from the dual statement that
\begin{equation*}
\begin{tikzcd}
\Gamma(X, E^\vee \otimes E'') \arrow[->]{r}{(i^\vee \otimes 1)_*} \arrow[->]{d}{\simeq} & \Gamma(X, E'^\vee \otimes E'') \arrow[->]{d}{\simeq} \\
\Hom_X(E''^\vee, E^\vee) \arrow[->]{r}{i^\vee \circ} & \Hom_X(E''^\vee, E'^\vee)
\end{tikzcd}
\end{equation*}
commutes.

But now it is clear that there exists a path $i^*(f_*(s)) \sim 0$ inside $\Gamma(\Pb(E'); E''(1))$ since it corresponds to the morphism $f \circ i$, and therefore has a preferred nullhomotopy by exactness of (\fref{LinearEmbeddingIsVanishingLocus}). Therefore we get a morphism $\Pb(E') \to V(i^*(f_*(s)))$ over $\Pb(E)$. Note that both derived schemes have the same virtual codimension in $\Pb(E)$. Therefore one can conclude, after checking locally that $V(i^*(f_*(s)))$ is smooth over $X$ (a standard argument we are not going to repeat here), that the map $\Pb(E') \to V(i^*(f_*(s)))$ must be an equivalence as a quasi-smooth embedding of virtual codimension 0 (recall that an $X$-morphism between smooth $X$-schemes is quasi-smooth).
\end{proof}

\subsection{Derived blow ups}

Let us recall the definition of a derived blow-up from \cite{Khan} (Section 4.1). Namely, given a derived scheme $X$ and a quasi-smooth embedding $j: Z \hookrightarrow X$, the \emph{derived blow up of $X$ along $Z$} is the derived scheme $\pi: \bl_Z(X) \to X$ so that, given another $X$-scheme $\pi_Y: Y \to X$, the space of morphisms $Y \to \bl_Z(X)$ over $X$ is canonically equivalent to the space of commuting diagrams
\begin{center}
\begin{tikzcd}
D \arrow[hookrightarrow]{r}{i_D} \arrow[->]{d}{g} & Y \arrow[->]{d}{\pi_Y} \\ 
Z \arrow[hookrightarrow]{r}{j} & X
\end{tikzcd}
\end{center}
so that
\begin{enumerate}
\item $i_D$ is a quasi-smooth embedding of virtual codimension 1;
\item the above square truncates to a Cartesian square of schemes; 
\item the canonical morphism $g^* \Nc^\vee_{Z/X} \to \Nc^\vee_{D/Y}$ of conormal bundles is surjective.
\end{enumerate}

\noindent Let us recall some of the basic properties of blow ups that are going to be useful for us.

\begin{thm}[cf. Theorem 4.1.5 \cite{Khan}]
Let $Z \hookrightarrow X$ be a quasi-smooth immersion. Then
\begin{enumerate}
\item the blow up $\pi: \bl_Z(X) \to X$ is natural in pullbacks;

\item if $X \hookrightarrow Y$ is a quasi-smooth immersion, then there exists a canonical quasi-smooth immersion $\bl_Z(X) \hookrightarrow \bl_Z(Y)$ called the \emph{strict transform};

\item if both $Z$ and $X$ are classical, $\pi: \bl_Z(X) \to X$ is the classical blow up of $X$ along $Z$.
\end{enumerate}
\end{thm}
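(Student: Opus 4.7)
The three parts are essentially recorded in \cite{Khan}, so the plan is to unpack the universal property of the derived blow up and assemble the required maps in each case; only item (2) involves a nontrivial construction.

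For part (1), given a Cartesian square with $f \colon X' \to X$ and $Z' := Z \times_X X'$ (which is a quasi-smooth immersion, since these are stable under base change), I would verify that $\bl_Z(X) \times_X X'$ satisfies the universal property of $\bl_{Z'}(X')$. A morphism from an $X'$-scheme $Y$ into this fibre product is the same as a morphism $Y \to \bl_Z(X)$ over $X$. By the defining universal property, such a datum is a commuting square with a quasi-smooth embedding $D \hookrightarrow Y$ of virtual codimension one mapping to $Z$, subject to the classical-Cartesian and conormal-surjectivity conditions. Using the map $Y \to X'$, the map $D \to Z$ factors uniquely through $Z' = Z \times_X X'$, and all three conditions transfer to this factorization (again because quasi-smoothness, truncated-Cartesianness, and surjectivity on conormal bundles are preserved under base change), verifying the universal property.

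For part (2), I would construct the strict transform by applying the universal property of $\bl_Z(Y)$ to the composition $\bl_Z(X) \to X \hookrightarrow Y$. The exceptional divisor $E_X \hookrightarrow \bl_Z(X)$ is tautologically a quasi-smooth embedding of virtual codimension one and carries a natural map $g \colon E_X \to Z$; the associated square is Cartesian on underlying classical schemes by the corresponding property of $\bl_Z(X)$. The required surjectivity $g^* \Nc^\vee_{Z/Y} \twoheadrightarrow \Nc^\vee_{E_X/\bl_Z(X)}$ is obtained by composing the tautological surjection $g^* \Nc^\vee_{Z/X} \twoheadrightarrow \Nc^\vee_{E_X/\bl_Z(X)}$ with the surjection $\Nc^\vee_{Z/Y} \twoheadrightarrow \Nc^\vee_{Z/X}$ coming from the quasi-smooth embedding $X \hookrightarrow Y$. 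To check that the resulting map $\bl_Z(X) \to \bl_Z(Y)$ is itself a quasi-smooth immersion, I would argue Zariski-locally on $Y$: after trivializing the conormal bundle of $Z$ in $Y$, the blow up $\bl_Z(Y)$ is covered by standard affine models, and $\bl_Z(X)$ sits inside as the derived vanishing locus of the pulled-back equations cutting out $X$ in $Y$.

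For part (3), when both $Z$ and $X$ are classical, $Z \hookrightarrow X$ is a regular immersion of classical schemes, and a local-model computation shows that $\bl_Z(X)$ is itself classical. I would then produce mutually inverse morphisms between $\bl_Z(X)$ and the classical blow up $\wtil X \to X$ by matching universal properties: the classical exceptional Cartier divisor of $\wtil X$ supplies exactly the data needed to invoke the derived universal property of $\bl_Z(X)$, while restricting the derived universal property of $\bl_Z(X)$ to classical test schemes recovers the classical universal property satisfied by $\wtil X$. The main obstacle across all three parts is tracking the higher coherences implicit in the $\infty$-categorical universal property, but this bookkeeping is handled in \cite{Khan}, and the proof ultimately reduces to assembling the observations above.
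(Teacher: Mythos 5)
The paper offers no proof of this statement at all: it is recalled, with a ``cf.,'' from Theorem 4.1.5 of \cite{Khan}, so the only meaningful comparison is with the arguments of Khan--Rydh. Your outline follows the same route they do --- base change via the universal property for (1), construction of the strict transform by feeding the exceptional divisor of $\bl_Z(X)$ into the universal property of $\bl_Z(Y)$ for (2), and local models plus a comparison of universal properties for (3). Part (1) and the construction of the map in part (2) are correct as sketched; in particular the factorization $D \to Z' = Z \times_X X'$ and the composite surjection $g^*\Nc^\vee_{Z/Y} \to g^*\Nc^\vee_{Z/X} \to \Nc^\vee_{E_X/\bl_Z(X)}$ are exactly the right ingredients, and the truncated-Cartesian condition over $Y$ does reduce to the one over $X$ because $X \times_Y Z \simeq Z$ classically.

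There is, however, a concrete error in your local verification that the strict transform is a quasi-smooth immersion. The derived vanishing locus in $\bl_Z(Y)$ of the pulled-back equations cutting out $X$ in $Y$ is the \emph{total} transform $X \times_Y \bl_Z(Y)$, not the strict transform: since $Z \subseteq X$, its classical truncation contains the entire exceptional divisor (the full preimage of $Z$), whereas $\bl_Z(X)$ meets the exceptional divisor only in the smaller projective subbundle corresponding to $\Nc_{Z/X}$. Already for $Z$ the origin in $Y = \Ab^2$ and $X$ a line through it, the total transform is the strict transform union the exceptional curve, which is not $\bl_Z(X) \simeq \Ab^1$. The correct local statement is this: writing $Z = V(g_1,\dots,g_n)$ and $X = V(g_1,\dots,g_m)$ for a Koszul-regular sequence adapted to the surjection $\Nc^\vee_{Z/Y} \to \Nc^\vee_{Z/X}$, the strict transform lives in the charts of $\bl_Z(Y)$ indexed by $i > m$ and is there the derived vanishing locus of the \emph{transformed} functions $t_j$ (with $g_j = t_j g_i$) for $j \le m$, which is what gives quasi-smoothness of the expected codimension. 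A milder caveat applies to (3): the classical universal property of the blow up (preimage of $Z$ is a Cartier divisor) and the derived universal property restricted to classical test schemes are not tautologically the same condition, and reconciling them is precisely the content of Theorem 4.1.5(iii) of \cite{Khan}; deferring to \emph{loc.\ cit.} is legitimate --- the paper under review does exactly that --- but ``matching universal properties'' alone does not discharge it.
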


The following proposition gives an explicit presentation for the derived blow up of $X$ at $Z$ in the case where $Z$ is the derived vanishing locus of a global section $s$ of a vector bundle $E$.

\begin{prop}\label{ProjectiveBlowUp}
Let $X$ be a derived scheme, $E$ be a vector bundle on $X$ and $s$ a global section of $E$. Consider the natural exact sequence
\begin{equation}\label{ProjCanonicalSeq}
0 \to \Oc(-1) \to E \to Q \to 0
\end{equation}
of vector bundles on $\Pb(E)$, and denote by $s''$ the image of $s$ under the composition
$$\Gamma(X; E) \simeq \Gamma(\Pb(E); E) \to \Gamma(\Pb(E); Q).$$
If we denote by $Z$ the derived vanishing locus of $s$ in $X$ and by $Y$ the derived vanishing locus of $s''$ in $\Pb(E)$, then there is an equivalence
$$Y \simeq \bl_Z(X)$$
of derived schemes which is natural under pullbacks of the data $(E,s)$.
\end{prop}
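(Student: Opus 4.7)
The plan is to verify that $Y = V(s'')$ fulfils the universal property of $\bl_Z(X)$, with an exceptional divisor built directly out of Proposition \fref{VanishingLociInExtensions}. Applied to the tautological sequence $0 \to \Oc(-1) \to E \to Q \to 0$ on $\Pb(E)$ together with the pullback of $s$, that proposition identifies the derived vanishing locus of $s|_{\Pb(E)}$ in $\Pb(E)$ as a two step construction: first pass to $Y = V(s'')$, then take the vanishing locus of a canonically lifted section $s' \in \Gamma(Y; \Oc(-1)|_Y)$. Since the direct vanishing locus of $s|_{\Pb(E)}$ in $\Pb(E)$ is the base change $\Pb(E) \times_X Z \simeq \Pb(E|_Z)$, we obtain a natural equivalence $D := V(s') \simeq \Pb(E|_Z)$. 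In particular, $D$ embeds into $Y$ as a quasi-smooth divisor and comes equipped with a natural projection $g: D \to Z$.

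Next I would check Khan--Rydh's three conditions for the data $(D \hookrightarrow Y, g, \pi)$, where $\pi: Y \to X$ is the restriction of the projection $\Pb(E) \to X$. Condition (1) is automatic, since $D$ is cut out by a section of the line bundle $\Oc(-1)|_Y$. Condition (2) reduces to a direct truncation-level check: $s|_Z \sim 0$ forces $s''|_{\Pb(E)|_Z} \sim 0$, so the truncation of $Y \times_X Z$ is the whole of $\Pb(E|_Z)$, matching $\pi_0(D)$. For condition (3), the conormal bundles are identified as $\Nc^\vee_{Z/X} \simeq E^\vee|_Z$ (via the defining section $s$) and $\Nc^\vee_{D/Y} \simeq \Oc(1)|_D$ (via $s'$), and under these identifications the canonical conormal map becomes the restriction to $D$ of the tautological surjection $E^\vee \twoheadrightarrow \Oc(1)$ on $\Pb(E)$, which is surjective since $\Oc(-1) \hookrightarrow E$ is an inclusion of vector bundles. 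This yields a morphism $\phi: Y \to \bl_Z(X)$ over $X$ that is manifestly natural under pullbacks of $(E,s)$.

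The last step is to verify that $\phi$ is an equivalence. Naturality of both sides under pullbacks of $(E,s)$ (for $\bl_Z$ by part (1) of the quoted Theorem, for $Y$ by the proof of Proposition \fref{VanishingLociInExtensions}) means the question is local on $X$; I may therefore assume $E \simeq \Oc_X^{\oplus r}$ and $s = (f_1, \ldots, f_r)$. Any such $(X,s)$ is the derived pullback of the universal datum $(\Spec \Zb[t_1, \ldots, t_r], (t_1, \ldots, t_r))$ along the classifying map $t_i \mapsto f_i$, and a further naturality reduction lets me assume we are in this universal case. There $X$ is classical and $Z \hookrightarrow X$ is a classical regular embedding, so part (3) of the quoted Theorem identifies $\bl_Z(X)$ with the classical blow up, while a direct chart-by-chart computation on $\Pb(E) = X \times \Pb^{r-1}$ shows that $V(s'')$ is cut out by the equations $\{f_i y_j - f_j y_i\}_{i,j}$, which is the usual incidence description of the same classical blow up. The main obstacle, beyond the bookkeeping, is arranging this last reduction cleanly: one needs every construction feeding into $\phi$ to behave coherently enough under pullback that the canonical equivalence in the universal case truly descends to a canonical equivalence in general.
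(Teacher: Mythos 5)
Your proposal is correct and follows essentially the same route as the paper: construct the divisor $D = V(s') \simeq \Pb(E)\times_X Z$ via the lifting construction for vanishing loci in extensions, verify the Khan--Rydh conditions to obtain $\phi: Y \to \bl_Z(X)$, and then check $\phi$ is an equivalence by reducing via naturality to the universal local case over $\Spec \Zb[t_1,\dots,t_r]$, where both sides become the classical blow up. The only differences are cosmetic (you identify the conormal map explicitly as the tautological surjection, where the paper deduces surjectivity from the outer square being homotopy Cartesian, and you write the incidence equations $f_i y_j - f_j y_i$ where the paper phrases the same locus via the twisted Euler sequence).
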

\begin{proof}
Our first task is to find a natural map $Y \to \bl_Z(X)$. By composing with the projection $\pi: \Pb(E) \to X$, we get a natural map $Y \to X$. Moreover, the exact sequence (\fref{ProjCanonicalSeq}) yields as in Construction \fref{SectionLift} a natural global section $s'$ of $\Oc_Y(-1)$ whose vanishing locus $D \hookrightarrow Y$ is naturally identified with the pullback of $Z \hookrightarrow X$ via $\pi$. Therefore we get a homotopy commutative square
\begin{equation}\label{FundamentalSquare}
\begin{tikzcd}
D \arrow[hookrightarrow]{r}{} \arrow[->]{d}{\pi\vert_Z} & Y \arrow[->]{d} \\ 
Z \arrow[hookrightarrow]{r} & X
\end{tikzcd}
\end{equation}
which comes from a diagram
\begin{center}
\begin{tikzcd}
D \arrow[hookrightarrow]{r}{i_D} \arrow[->]{d}{\pi\vert_Z} & Y  \arrow[hookrightarrow]{r} & \Pb(E) \arrow[->]{d} \\ 
Z \arrow[hookrightarrow]{rr} & & X
\end{tikzcd}
\end{center}
whose outer square is homotopy Cartesian. We therefore deduce that (\fref{FundamentalSquare}) 
\begin{enumerate}
\item truncates into a Cartesian square of classical schemes;
\item the induced map $\pi \vert_Z^* \Nc^\vee_{Z/X} \to \Nc^\vee_{D/Y}$ on conormal bundles is surjective
\end{enumerate}
so that by the universal property of derived blow ups (see \cite{Khan}), we obtain a map $\phi: Y \to \bl_Z(X)$. As the whole construction was natural in derived pullbacks, also the morphism $\phi$ is.

We prove that $\phi$ is an equivalence in the usual way: by naturality, we can check this locally, and therefore we can use naturality again to reduce to the situation where we blow up the section $s = x_1 e_1 + x_2 e_2 + \cdots + x_n e_n$ of the trivial vector bundle $\Oc_{\Ab^n}^{\oplus n}$ on $\Ab^n = \Spec(\Zb[x_1,...,x_n])$. Note that the sequence (\fref{ProjCanonicalSeq}) is equivalent to the twisted Euler sequence 
$$0 \to \Oc(-1) \to \Oc^{\oplus n} \to T_{\Pb^{n-1}_{\Ab^n} / \Ab^n} (-1) \to 0$$
on $\Pb^{n-1}_{\Ab^n}$. Denoting by $y_i$ the dual basis for $e_i$, we see that the section $s''$ corresponds to $x_1 \partial_{y_1} + x_2 \partial_{y_2} + \cdots + x_n \partial_{y_n}$, whose vanishing locus consists clearly of exactly those points $\bigl((x_1,...,x_n), [y_1: ...: y_n] \bigr)$ so that $[y_1:...:y_n] = [x_1:...:x_n]$ whenever the latter is well defined and of arbitrary $\bigl((0,...,0),[y_1:...:y_n] \bigr)$ otherwise. Hence the vanishing locus is just the blow up of $\Ab^n$ at the origin and we are done.
\end{proof}

\begin{ex}\label{ProjBlowUpEx}
It might be enlightening (but it will certainly be useful later) to consider the special case of Proposition \fref{ProjectiveBlowUp} where the section $s$ vanishes nowhere. Since blowing up along an empty center doesn't do anything, we obtain the commutative triangle
\begin{center}
\begin{tikzcd}
X \arrow[->]{rd}{\mathrm{Id}} \arrow[hookrightarrow]{r}{i} & \Pb(E) \arrow[->]{d} \\ 
 & X
\end{tikzcd}
\end{center}
where $i$ identifies $X$ as the vanishing locus of $s' \in \Gamma(\Pb(E); Q)$. We claim that
\begin{equation}\label{TrivialBlowUpExSeq}
0 \to i^*\Oc_{\Pb(E)} \xrightarrow{i^*s} i^*E \to i^*Q \to 0
\end{equation}
is exact. Indeed, $i^* s$ is an embedding of vector bundles since $s$ was assumed not to vanish anywhere, and since the composition $i^* \Oc_{\Pb(E)} \to i^* Q$ is canonically homotopic to 0 by Lemma \fref{PreferredPathLemma}, the exactness follows from rank considerations. As the $i$-pullback of the canonical exact sequence (\fref{ProjCanonicalSeq}) is also exact, and since it shares the same right side as (\fref{TrivialBlowUpExSeq}), it follows from the universal property of $\Pb(E)$ that $i$ is equivalent to the linear embedding $s: \Pb(\Oc_X) \to \Pb(E)$.

Of course, if $s$ has a non-empty vanishing locus, then the linear embedding $s: \Pb(\Oc_X) \to \Pb(E)$ does not make sense. In that situation, we will have to first modify our space by taking the blow up, and then, on the modified space, there exists a canonical inclusion $\Oc(\Ec) \to E$ that allows us to define a map to $\Pb(E)$. In fact, one can show that blowing up is the ``most economic way'' of making $s$ equivalent to $s_1 \otimes s_2$, where $s_2: \Ls \hookrightarrow E$ an embedding and $s_1$ is a section of $\Ls$. 
\end{ex}

Next we are going to show that a linear embedding $\Pb(E') \hookrightarrow \Pb(E' \oplus E'')$ is almost a retraction. Let us begin with more general considerations: suppose $X$ is a derived scheme and
$$0 \to E' \to E \to E'' \to 0$$
an exact sequence of vector bundles on $X$. As $\Pb(E')$ is the derived vanishing locus of a section $s''$ of $E''(1)$ on $\Pb(E)$, Proposition  \fref{ProjectiveBlowUp} yields a natural surjection $E''^\vee (-1) \to \Oc(-\Ec)$ of vector bundles on $\bl_{\Pb(E')}(\Pb(E))$. We can then twist this surjection to obtain a surjection
\begin{equation}\label{TwistedSurj}
E''^\vee \to \Oc(1 -\Ec)
\end{equation}
then gives rise to a morphism $\rho: \bl_{\Pb(E')}(\Pb(E)) \to \Pb(E'')$.

\begin{prop}\label{BlowUpOfLinearEmbedding}
Let $X$ be a derived scheme, and let 
$$0 \to E' \to E' \oplus E'' \to E'' \to 0$$
be a split short exact sequence of vector bundles on $X$. Then the morphism $\rho$, constructed as above, expresses  $\Pb(E'')$ as a retract (up to homotopy) of $\bl_{\Pb(E')}(\Pb(E' \oplus E''))$.
\end{prop}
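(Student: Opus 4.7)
The plan is to exhibit a section $\sigma: \Pb(E'') \to \bl_{\Pb(E')}(\Pb(E' \oplus E''))$ of $\rho$ (up to homotopy) and verify that $\rho \circ \sigma \simeq \mathrm{id}_{\Pb(E'')}$.

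\textbf{Construction of $\sigma$.} The splitting produces a linear embedding $\bar j: \Pb(E'') \hookrightarrow \Pb(E' \oplus E'')$ corresponding to the sub-bundle inclusion $E'' \hookrightarrow E' \oplus E''$. Any tautological line $\Oc_{\Pb(E'')}(-1) \hookrightarrow E''$ meets the sub-bundle $E' \subset E' \oplus E''$ only at zero, so the image of $\bar j$ is set-theoretically disjoint from $\Pb(E')$; working locally (after trivializing all bundles) one verifies that even the derived intersection is empty. Hence $\bar j$ factors through the open complement $U := \Pb(E' \oplus E'') \setminus \Pb(E')$. Since the blow up restricts to an equivalence over $U$, we obtain a canonical lift $\sigma: \Pb(E'') \to \bl_{\Pb(E')}(\Pb(E' \oplus E''))$.

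\textbf{Identifying $\rho \circ \sigma$.} By the universal property of $\Pb(E'')$, the composite $\rho \circ \sigma$ is classified by the pullback $\sigma^*\bigl(E''^\vee \to \Oc(1-\Ec)\bigr)$, and it suffices to identify this with the tautological surjection $E''^\vee \to \Oc_{\Pb(E'')}(1)$. For the target: $\sigma$ avoids $\Ec$, so $\sigma^*\Oc(-\Ec) \simeq \Oc$, while $\sigma^*\Oc(1) \simeq \bar j^*\Oc_{\Pb(E'\oplus E'')}(1) \simeq \Oc_{\Pb(E'')}(1)$ by the standard behaviour of $\Oc(1)$ under linear embeddings; combining, $\sigma^*\Oc(1 - \Ec) \simeq \Oc_{\Pb(E'')}(1)$. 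For the morphism itself, unwinding Proposition \fref{ProjectiveBlowUp}, the blow up $\bl$ sits inside $\Pb_{\Pb(E'\oplus E'')}(E''(1))$ as the vanishing locus of the lift of the section $s_F$ corresponding to the projection $f: E'\oplus E'' \to E''$; on $\bl$, the new tautological line bundle $\Oc_{\mathrm{new}}(-1)$ is identified with $\Oc(\Ec)$ via this lift, and the surjection $E''^\vee(-1) \to \Oc(-\Ec)$ is the restriction of the tautological surjection $\pi^*(E''(1))^\vee \to \Oc_{\mathrm{new}}(1)$. Over $\bl \setminus \Ec \simeq U$, the inclusion $\Oc_{\mathrm{new}}(-1) \hookrightarrow \pi^*E''(1)$ is the line spanned by $s_F$, and $\bar j^* s_F$ is precisely the tautological inclusion $\Oc_{\Pb(E'')}(-1) \hookrightarrow E''$ twisted by $\Oc(1)$, since on $\bar j(\Pb(E''))$ the tautological line lies in $E''$ and $f$ restricts to the identity on $E''$. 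Dualizing and twisting, the pullback of the surjection matches the tautological surjection $E''^\vee \to \Oc_{\Pb(E'')}(1)$, as required.

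\textbf{Main obstacle.} The geometric picture is transparent, and the essential difficulty is the bookkeeping: one must carefully track how $\Oc_{\mathrm{new}}(-1)$, $\Oc(\Ec)$ and the pulled-back $\Oc(1)$ combine on $\bl$, and ensure that the nullhomotopies and identifications produced by Proposition \fref{ProjectiveBlowUp} and Example \fref{ProjBlowUpEx} are natural enough to transfer under $\sigma$. Once these identifications are pinned down, $\rho \circ \sigma \simeq \mathrm{id}_{\Pb(E'')}$ is forced by the universal property of $\Pb(E'')$, and $\Pb(E'')$ is realized as a homotopy retract of $\bl_{\Pb(E')}(\Pb(E' \oplus E''))$.
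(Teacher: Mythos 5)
Your proposal is correct and follows essentially the same route as the paper: the section is obtained from the disjointness of $\Pb(E')$ and $\Pb(E'')$ (the paper phrases this as a homotopy Cartesian square over the blow up, you as a factorization through the open complement $U$, which amounts to the same thing), and $\rho\circ\sigma\simeq\mathrm{id}$ is checked by pulling back the surjection $E''^\vee\to\Oc(1-\Ec)$, restricting to $U$ via Example \fref{ProjBlowUpEx}, and identifying the resulting section of $E''^\vee(-1)$ with the identity morphism $E''\to E''$ as in Proposition \fref{LinearEmbeddingIsVanishingLocus}.
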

\begin{rem}
Of course, the last claim should also hold in the $\infty$-categorical sense and not only up to homotopy. But we have decided to restrict the generality in order to get away with a simpler proof.
\end{rem}
\begin{proof}
The only thing that is not obvious is that $\rho$ expresses $\Pb(E'')$ as a retract of the blow up. First of all, since $\Pb(E')$ and $\Pb(E'')$ do not meet inside $\Pb(E' \oplus E'')$, we can form the homotopy Cartesian square
\begin{equation*}
\begin{tikzcd}
\Pb(E'') \arrow[->]{r}{i'} \arrow[->]{d}{\mathrm{Id}} & \bl_{\Pb(E')}(\Pb(E' \oplus E'')) \arrow[->]{d}{} \\
\Pb(E'') \arrow[->]{r}{i} & \Pb(E' \oplus E'')
\end{tikzcd}
\end{equation*}
providing the map $i'$, which we claim to satisfy $\rho \circ i \simeq \mathrm{Id}$.

Note that we have to show that the $i'$-pullback of the surjection (\fref{TwistedSurj}) is equivalent to the canonical surjection $E''^\vee \to \Oc(1)$. But since
\begin{enumerate}
\item $i$ factors through the open subset $U := \Pb(E' \oplus E'') \backslash \Pb(E')$; 
\item the restriction $E''^\vee \to \Oc_U(1)$ of (\fref{TwistedSurj}) to $U$ corresponds to $s''^\vee \vert_U: E''^\vee(-1) \to \Oc_U$ by Example \fref{ProjBlowUpEx}
\end{enumerate} 
we can conclude that the pullback of (\fref{TwistedSurj}) corresponds to the global section $i^*(s''^\vee)$ of $E''^\vee (-1)$ on $\Pb(E'')$. By unwinding the definitions (much like in the proof of Proposition \fref{LinearEmbeddingIsVanishingLocus}), one sees that $i^*(s''^\vee)$ corresponds to the identity morphism $E'' \to E''$ on $X$, identifying the corresponding surjection $E''^\vee \to \Oc(1)$ on $\Pb(E'')$ with the canonical one.
\end{proof}

\subsection{Precobordism theories}\label{PrecobordismBackground}

Let us recall from \cite{AY} that a \emph{precobordism theory} $\Bb^*$ is a bivariant theory (on the homotopy category of quasi-projective derived schemes over a Noetherian ring $A$) satisfying certain axioms. What this means is that we get an Abelian group $\Bb^*(X \xrightarrow{f} Y)$ for any morphism $f: X \to Y$, a bilinear \emph{bivariant product} $\bullet$ associated to compositions of morphisms, \emph{bivariant pullbacks} associated to derived Cartesian squares, and \emph{bivariant pushforwards} associated to the factoring of $f$ through a projective morphism $g: X \to X'$. The structure of bivariant theory makes $(\Bb^*(X), \bullet) := (\Bb^*(X \xrightarrow{\mathrm{Id}} X), \bullet)$ (commutative) rings, contravariantly functorial in $X$. The choice of an \emph{orientation} for $\Bb^*$ gives rise to \emph{Gysin pushforward morphisms} $f_!: \Bb^*(X) \to \Bb^*(Y)$
Since we are mostly interested in the cohomology rings in this paper, we are not going to recall bivariant formalism in greater detail here. The interested reader can consult \cite{An} or \cite{AY} for further details.

What we are going to need in this paper, is detailed understanding of the cohomology theory associated to the universal precobordism. It is recorded in the following construction, which is just the cohomological restriction of the bivariant construction of \cite{AY} Section 6.1.

\begin{cons}[Universal precobordism rings]
Let $A$ be a Noetherian ring. For any quasi-projective derived $A$-scheme $X$, we define the \emph{ring of cobordism cycles} $\Mc^*_+(X)$ so that the degree $d$ part $\Mc^d_+(X)$ is the free Abelian group on equivalence classes $[V \xrightarrow{f} X]$ with $f$ projective and quasi-smooth of relative virtual dimension $-d$ (modulo, of course, the relation that disjoint union of cycles corresponds to addition). The (commutative) ring structure is given by the homotopy fibre product over $X$ the class of the identity morphism serving as the unit element. 

We can also define operations on $\Mc_+^*$. For an arbitrary map $g: X \to Y$, the \emph{pullback morphism} $g^*: \Mc_+^*(Y) \to \Mc_+^*(X)$ is defined by linearly extending
$$[V \to Y] \mapsto [V \times_Y X \to X].$$
Note that $g^*$ acts by homomorphisms of rings and preserves degrees. For every projective $g: X \to Y$ that is also quasi-smooth (of relative virtual dimension $-d$), we can define the \emph{Gysin pushforward morphisms} $g_!: \Mc_+^*(X) \to \Mc_+^{*+d}(Y)$ by linearly extending
$$[V \xrightarrow{f} X] \mapsto [V \xrightarrow{g \circ f} Y].$$
Note that $g_!$ doesn't need to preserve the multiplication, only the addition. Both the pullbacks and pushforwards are functorial in the obvious sense.

There are now two equivalent sets of relations one can enforce on the rings $\Mc_+^*$ to obtain the \emph{universal precobordism rings} $\PCob^*$. Either
\begin{enumerate}
\item \emph{double point relations}: given a projective quasi-smooth morphism $W \to \Pb^1 \times X$, let $W_0$ be the fibre over $0$ and suppose that the fibre $W_\infty$ over $\infty$ is the sum of two virtual Cartier divisors $A$ and $B$ inside $W$; we then require that
\begin{equation}\label{DoublePointCobordismEq}
[W_0 \to X] - [A \to X] - [B \to X] + [\Pb_{A \times_W B}(\Oc(A) \oplus \Oc) \to X] = 0
\end{equation}
(the linear combinations of elements of the above form clearly form ideals that are stable in the operations of $\Mc_+^*$, and therefore the quotient theory makes sense);
\end{enumerate}
or
\begin{enumerate}
\item \emph{homotopy fibre relations}: given a projective quasi-smooth morphism $W \to \Pb^1 \times X$, let $W_0$ and $W_\infty$ be the fibres over $0$ and $\infty$ respectively; then we require that
$$[W_0 \to X] - [W_\infty \to X] = 0$$
(again, linear combinations of these form ideals stable in operations of $\Mc_+^*$, making the quotient theory sensible);
\item given line bundles $\Ls_1$ and $\Ls_2$ on $X$, we require that
\begin{align}\label{GeometricFGL}
c_1(\Ls_1 \otimes \Ls_2) &= c_1(\Ls_1) + c_1(\Ls_2) - c_1(\Ls_1) \bullet c_1(\Ls_2) \bullet [\Pb_1 \to X] \\
&- c_1(\Ls_1) \bullet c_1(\Ls_2) \bullet c_1(\Ls_1 \otimes \Ls_2) \bullet ([\Pb_2 \to X] - [\Pb_3 \to X]), \notag 
\end{align}
where 
\begin{align*}
\Pb_1 &:= \Pb_X(\Ls_1 \oplus \Oc); \\
\Pb_2 &:= \Pb_X(\Ls_1 \oplus (\Ls_1 \otimes \Ls_2) \oplus \Oc); \\
\Pb_3 &:= \Pb_{\Pb_X(\Ls_1 \oplus (\Ls_1 \otimes \Ls_2))}(\Oc(-1) \oplus \Oc);
\end{align*}
and $c_1(\Ls)$ is the \emph{first Chern class} (the \emph{Euler class}) of the line bundle $\Ls$ (equivalent to $[V(s) \hookrightarrow X]$ for any global section $s$ of $\Ls$); as this equation is not stable under pushforwards, one needs to take all the generated relations before taking the quotient;
\end{enumerate}
the equivalence of these two sets of relations was proven in \cite{AY} (following the original proof in \cite{LP}). The two slightly different sets of relations will both be convenient for us at certain points of the paper.
\end{cons}

We will also need to recall some of the formal properties of $\PCob^*$ that are helpful in performing computations.

\begin{thm}
The operations of $\PCob^*$ satisfy the following basic properties.
\begin{enumerate}
\item \emph{Push-pull formula:} if  the square
\begin{equation*}
\begin{tikzcd}
X' \arrow[->]{r}{f'} \arrow[->]{d}{g'} & Y' \arrow[->]{d}{g} \\
X \arrow[->]{r}{f} & Y
\end{tikzcd}
\end{equation*}
is homotopy Cartesian with $f$ projective and quasi-smooth (of pure relative virtual dimension), then $f'_! g'^* (\alpha) = g^* f_!(\alpha)$ for all $\alpha \in \PCob^*(X)$.

\item \emph{Projection formula:} if $f: X \to Y$ is projective and quasi-smooth (of pure relative virtual dimension), then $f_!\bigl(f^*(\alpha) \bullet \beta \bigr) = \alpha \bullet f_!(\beta)$ for all $\alpha \in \PCob^*(Y)$ and $\beta \in \PCob^*(X)$. 
\end{enumerate}
\end{thm}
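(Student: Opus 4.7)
Both identities are bilinear in the classes involved and both constructions ($\bullet$, $f_!$, $g^*$, $f'_!$, $g'^*$) are defined by explicit formulas on the ring of cobordism cycles $\Mc^*_+$. My plan is therefore to verify the two equalities at the level of representative generators $[V\to X]$, using only the universal property of homotopy pullbacks, and then to observe that all four operations descend to $\PCob^*$ because they send the ideal of relations to itself.

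\textbf{Push-pull.} Start with a generator $\alpha = [V \xrightarrow{h} X]\in \PCob^*(X)$. By definition
$$g^* f_!(\alpha) = g^*\bigl([V \xrightarrow{f\circ h} Y]\bigr) = [V\times_Y Y' \to Y'].$$
On the other side, $g'^*(\alpha) = [V\times_X X' \to X']$ and then applying $f'_!$ composes with $f'$, producing the class of $V\times_X X'$ over $Y'$ through $f'$. Since the original square is homotopy Cartesian, the canonical equivalence $X' \simeq X\times_Y Y'$ gives
$$V\times_X X' \simeq V\times_X (X\times_Y Y') \simeq V\times_Y Y',$$
and under this equivalence the structure morphism to $Y'$ becomes exactly the one occurring in $g^*f_!(\alpha)$. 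Quasi-smoothness and projectivity of the relevant maps are preserved under homotopy base change, so the right-hand side is a well-defined cycle, and the identity holds in $\Mc^*_+$.

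\textbf{Projection formula.} Write $\alpha = [W\xrightarrow{k} Y]$ and $\beta = [V\xrightarrow{h} X]$. Then $f^*(\alpha) = [W\times_Y X \to X]$, and the bivariant product over $X$ gives
$$f^*(\alpha)\bullet \beta = [(W\times_Y X)\times_X V \to X] = [W\times_Y V \to X].$$
Applying $f_!$ composes with $f$, yielding $[W\times_Y V \to Y]$. But this is also what one obtains by computing $\alpha \bullet f_!(\beta) = [W\to Y]\bullet [V\xrightarrow{f\circ h} Y]$ directly as a homotopy fibre product over $Y$. So again the two sides agree on cycles.

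\textbf{Descent to $\PCob^*$.} The only remaining point is that these computations pass to the quotient. For pullbacks this is clear: pulling back a double point cobordism $W\to \Pb^1\times Y$ along $g$ yields a double point cobordism $W\times_Y X \to \Pb^1\times X$, with the fibres over $0$ and $\infty$ and the virtual Cartier divisors behaving compatibly. For Gysin pushforward along a projective quasi-smooth $f$, composing a double point cobordism $W\to \Pb^1\times X$ with $\mathrm{id}\times f$ gives another projective quasi-smooth map to $\Pb^1\times Y$, and the fibres are preserved by the pushforward. Thus the ideals of relations are stable under all four operations, and the identities above descend to $\PCob^*$. The only potentially delicate step is the identification $V\times_X X' \simeq V\times_Y Y'$ — but this is exactly the transitivity of homotopy pullbacks, so no genuine obstacle arises.
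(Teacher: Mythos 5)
Your proof is correct and matches the route the paper itself indicates: the paper's proof simply appeals to the fact that these identities hold already for $\Mc_+^*$ as part of an oriented bivariant theory, adding parenthetically that "the formulas can also be checked easily on the level of cycles" --- which is precisely the verification you carry out, via transitivity of homotopy pullbacks on generators plus stability of the relation ideals under the four operations. No gaps; the descent step you include is in fact already built into the paper's construction of $\PCob^*$, so it is harmless extra detail.
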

\begin{proof}
Of course, this does not have anything to do with the definition of $\PCob^*$. All the formulas hold already for $\Mc_+^*$ because it is part of a bivariant theory with an orientation (the formulas can also be checked easily on the level of cycles).
\end{proof}

The following simple observations will also be useful for us in the sequel.

\begin{cor}\label{InjPullback}
Let $f: X \to Y$ be projective quasi-smooth morphism between derived schemes so that there exists a class $\eta_X \in \PCob^*(X)$ with $f_! (\eta_X) = 1$. Then the pullback morphism $\pi^*: \PCob^*(Y) \to \PCob^*(X)$ is injective.
\end{cor}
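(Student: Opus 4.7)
The plan is to give a one-line argument using the projection formula stated in the preceding theorem. Assume that $f^*(\alpha) = 0$ for some $\alpha \in \PCob^*(Y)$ (I read $\pi^*$ as a typo for $f^*$ since $f$ is the map in question). The idea is to multiply by the witness class $\eta_X$ and push forward: by the projection formula,
\[
0 = f_!\bigl(0 \bullet \eta_X\bigr) = f_!\bigl(f^*(\alpha) \bullet \eta_X\bigr) = \alpha \bullet f_!(\eta_X) = \alpha \bullet 1 = \alpha,
\]
so $f^*$ is injective.

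There is essentially no obstacle; the only subtlety is ensuring that the projection formula applies in the form needed, namely $f_!(f^*(\alpha) \bullet \eta_X) = \alpha \bullet f_!(\eta_X)$, which is exactly part (2) of the theorem (since $f$ is projective and quasi-smooth of pure relative virtual dimension, as required). The hypothesis $f_!(\eta_X) = 1$ in the unit ring element of $\PCob^*(Y)$ then immediately yields the conclusion. No appeal to the precobordism relations or to the structure of cobordism cycles is needed beyond the formal bivariant properties.
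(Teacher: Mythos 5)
Your proof is correct and is exactly the paper's argument: apply the projection formula to $f_!\bigl(f^*(\alpha)\bullet\eta_X\bigr)=\alpha\bullet f_!(\eta_X)=\alpha$, which exhibits a left inverse to $f^*$ and hence gives injectivity. Your reading of $\pi^*$ as a typo for $f^*$ is also right.
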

\begin{proof}
Indeed, it follows from the projection formula that
\begin{align*}
f_!(\pi^*(\alpha) \bullet \eta_X) &= \alpha \bullet f_!(\eta_X) \\
&= \alpha
\end{align*}
for all $\alpha \in \PCob^*(Y)$, proving the claim.
\end{proof}

\begin{prop}\label{SummandGivesInjectivePush}
Let $E$ and $F$ be vector bundles on a derived scheme $X$. Then the embedding $i: \Pb(E) \hookrightarrow \Pb(E \oplus F)$ gives rise to an injective Gysin pushforward morphism $i_!: \PCob^*(\Pb(E)) \to \PCob^*(\Pb(E \oplus F)).$ 
\end{prop}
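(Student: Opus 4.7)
The plan is to use Proposition~\ref{BlowUpOfLinearEmbedding} to factor $i_!$ through a Gysin pushforward that is manifestly split injective. Applying that proposition to the split short exact sequence $0 \to F \to E \oplus F \to E \to 0$ produces the derived blow up $\pi: \bl := \bl_{\Pb(F)}(\Pb(E \oplus F)) \to \Pb(E \oplus F)$ together with a morphism $\rho: \bl \to \Pb(E)$ and a section $i': \Pb(E) \to \bl$ of $\rho$ satisfying $\rho \circ i' \simeq \mathrm{Id}_{\Pb(E)}$ and $\pi \circ i' \simeq i$.

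First I would observe that $\Pb(E)$ and $\Pb(F)$ are disjoint derived subschemes of $\Pb(E \oplus F)$ (classically their intersection is empty, and one can check locally that this persists at the derived level), so the blow down $\pi$ restricts to an equivalence over $\Pb(E)$. Consequently the square
\begin{equation*}
\begin{tikzcd}
\Pb(E) \arrow[->]{r}{i'} \arrow[->]{d}{\mathrm{Id}} & \bl \arrow[->]{d}{\pi} \\
\Pb(E) \arrow[->]{r}{i} & \Pb(E \oplus F)
\end{tikzcd}
\end{equation*}
is homotopy Cartesian, and since $i$ is a quasi-smooth projective embedding, the push-pull formula yields $i'_! = \pi^* \circ i_!$. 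It therefore suffices to prove injectivity of $i'_!$.

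For this last step, I would argue that $\rho$ is itself projective and quasi-smooth: it is projective because $\bl$ is projective over $X$ and $\Pb(E)$ is quasi-projective and separated over $X$, and quasi-smooth because $\bl$ is the smooth blow up of the smooth $X$-scheme $\Pb(E \oplus F)$ along the smooth subscheme $\Pb(F)$, so both source and target of $\rho$ are smooth over $X$. The Gysin pushforward $\rho_!$ is then defined, and functoriality together with $\rho \circ i' \simeq \mathrm{Id}_{\Pb(E)}$ gives $\rho_! \circ i'_! = \mathrm{Id}$, exhibiting $i'_!$ as a split injection. Consequently $\pi^* \circ i_!$ is injective, forcing $i_!$ to be injective.

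The main delicate point is verifying that the displayed square is genuinely homotopy Cartesian at the derived level, which reduces to showing that the derived pullback of $\Pb(E)$ under $\pi$ is $\Pb(E)$ itself. This should follow from the explicit local description of the blow up given in Proposition~\ref{ProjectiveBlowUp}, since that presentation makes manifest that $\pi$ is an equivalence away from the center $\Pb(F)$, and $\Pb(E)$ lies entirely in the complement of $\Pb(F)$.
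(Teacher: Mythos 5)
Your proposal is correct and follows essentially the same route as the paper: both factor the problem through the homotopy Cartesian square coming from Proposition \fref{BlowUpOfLinearEmbedding}, use push-pull to identify $i'_!$ with $\pi^* \circ i_!$, and deduce injectivity of $i'_!$ from the retraction $\rho \circ i' \simeq \mathrm{Id}$ via functoriality of Gysin pushforwards. The extra details you supply (disjointness of $\Pb(E)$ and $\Pb(F)$, and why $\rho$ is projective and quasi-smooth so that $\rho_!$ exists) are exactly the points the paper leaves implicit.
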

\begin{proof}
By Proposition \fref{BlowUpOfLinearEmbedding} we can form a homotopy Cartesian diagram
\begin{equation*}
\begin{tikzcd}
\Pb(E) \arrow[->]{r}{i'} \arrow[->]{d}{\mathrm{Id}} & \bl_{\Pb(F)}(\Pb(E \oplus F)) \arrow[->]{d}{\pi} \\
\Pb(E) \arrow[->]{r}{i} & \Pb(E \oplus F)
\end{tikzcd}
\end{equation*}
so that $i'$ admits a partial inverse $\rho \circ i' \simeq \mathrm{Id}_{\Pb(E)}$. As $i'_!$ is injective by functoriality, we can use push-pull formula to conclude that also $\pi^* i_!$ is injective, and therefore $i_!$ must be injective.
\end{proof}

\subsubsection*{Precobordism with bundles}

We can also define a slight variant of the above theory called the \emph{universal precobordism ring with vector bundles}. Namely, we start with the group completions $\Mc_+^{d,r}(X)$ of the Abelian monoid on cycles
$$[V \to X, E],$$
where $V \to X$ is quasi-smooth and projective of relative virtual dimension $-d$, $E$ is a vector bundle of rank $r$ on $V$ and the monoid structure is given by disjoint union. One defines the pullbacks and pushforwards for $\Mc_+^{*,*}$ in the obvious way. There are now two product structures that make sense: namely
$$[V_1 \to X, E_1] \bullet_\oplus [V_2 \to X, E_2] := [V_1 \times_X V_2 \to X, E_1 \oplus E_2]$$
and 
$$[V_1 \to X, E_1] \bullet_\otimes [V_2 \to X, E_2] := [V_1 \times_X V_2 \to X, E_1 \otimes E_2].$$
Note that the restricted group $\Mc_+^{*,1}(X)$ with line bundles is a ring with multiplication $\bullet_\otimes$.

The rings $\PCob^{*,*}$ are obtained by enforcing the equation (cf. (\fref{DoublePointCobordismEq}))
$$[W_0 \to X, E \vert_{W_0}] - [A \to X, E \vert_A] - [B \to X, E \vert_B] + [\Pb_{A \times_W B}(\Oc(A) \oplus \Oc) \to X, E \vert_{A \times_W B}] = 0$$
to hold, where $E$ is any vector bundle on $W$. Of course one could also arrive at the same rings by enforcing homotopy fibre relation and (\fref{GeometricFGL}). Note that we have natural embeddings of theories
$$\PCob^*(X) \hookrightarrow \PCob^{*,0}(X)$$
and
$$\PCob^*(X) \hookrightarrow \PCob^{*,1}(X)$$
defined by equipping the cycle with a rank 0 vector bundle or a trivial line bundle respectively. When it should cause no confusion, we often omit the target $X$ from cycles and the subscripts from the products $\bullet_\oplus$ and $\bullet_\otimes$ in order to simplify the notation.

\section{The formal group law of precobordism theory}\label{FGLSect}

The purpose of this section is to prove that the Euler class of $\Ls_1 \otimes \Ls_2$ can be computed easily from those of $\Ls_1$ and $\Ls_2$. Recall first that in \cite{AY} Section 6.3, we were able to find coefficients $a_{i,j} \in \PCob^*(pt)$ of a formal group law
$$F(x,y) = \sum_{i,j} a_{ij} x^i y^j$$
by letting $a_{ij}$ be so that
$$e(\Oc(1,1)) = \sum_{i,j} a_{ij} e(\Oc(1,0))^i \bullet e(\Oc(0,1))^j \in \PCob^*(\Pb^n \times \Pb^m)$$
for any (and hence all) $n \geq i$ and $m \geq j$. It follows from the weak version of the projective bundle formula that this is well defined. Moreover, it is a trivial consequence that given globally generated line bundles $\Ls_1$ and $\Ls_2$ on a quasi-projective derived scheme $X$, then
\begin{equation}\label{FGLEq}
e(\Ls_1 \otimes \Ls_2) = \sum_{i,j} a_{ij} e(\Ls_1)^i \bullet e(\Ls_2)^j \in \PCob^*(X).
\end{equation}
We can now state more precisely the main goal of this section: we want to show that (\fref{FGLEq}) holds for arbitrary $\Ls_1$ and $\Ls_2$, not just globally generated ones (Theorem \fref{FGLTheorem}). Traditionally, there are two ways to reduce the general case to the globally generated one:
\begin{enumerate}
\item using properties of formal group laws, it is possible to define new Euler classes $\tilde e(\Ls)$ in a way that $\tilde e(\Ls) = e(\Ls)$ whenever $\Ls$ is globally generated and (\fref{FGLEq}) holds for arbitrary $\Ls_1, \Ls_2$;
 
\item using Jouanolou's trick one can find a affine scheme $Y$ that is a torsor for a vector bundle on $X$. Then, assuming that the cohomology theory satisfies a strong enough form of homotopy invariance, we can check (\fref{FGLEq}) on by pulling back to $Y$, where the line bundles $\Ls_1$ and $\Ls_2$ become globally generated.

\end{enumerate} 
Since we do not want to redefine Euler classes, and since $\PCob^*$ does not satisfy homotopy invariance, we are forced to do something different. Luckily, everything boils down to explicit computations.

\subsection{Formal group law for arbitrary line bundles}\label{FGLSubSect}

Our strategy is essentially to show that one can derive the formal group law directly from (\fref{GeometricFGL}) without the need of knowing projective bundle formula beforehand. To do this, we need to be able to express the classes $[\Pb_i]$ as power series in $e(\Ls_1)$ and $e(\Ls_2)$ whose coefficients do not depend on $X, \Ls_1$ or $\Ls_2$. From now on, $X$ is assumed to be quasi-projective over a Noetherian ring $A$ of finite Krull dimension and $\Ls_1,\Ls_2$ line bundles on $X$. Let us first recall the following result from \cite{AY}:

\begin{lem}[cf. \cite{AY} Lemma 6.18]\label{ClassOfLine}
We have the equality
\begin{align*}
[X, \Ls] &= \sum_{i \geq 0} e(\Ls)^i \bullet \bigl(\beta_i - [\Pb(\Ls \oplus \Oc)] \bullet \beta_{i-1} \bigr) \in \PCob^{*,1}(X),
\end{align*}
where 
\begin{align*}
(P_0, \Mc_0) &:= \bigl( \Spec(A), \Oc \bigr), \\
P_{i+1} &:= \Pb_{P_i}(\Mc_i \oplus \Oc), \\
\Mc_{i+1} &:= \Mc_{i}(1), \\
\beta_i &:= \pi^*[P_i, \Mc_i] \in \PCob^*(X),
\end{align*}
and $P_{-1} = \emptyset$.
\end{lem}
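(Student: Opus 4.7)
This is essentially the same statement as Lemma 6.18 of \cite{AY}, and I expect the proof to proceed along identical lines. The idea is to exhibit $[X,\Ls]$ as a ``geometric series'' coming from a single well-chosen double point degeneration that one then applies recursively.

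The strategy is to construct a cobordism $W \to X \times \Pb^1$ carrying a line bundle $\Nc$ on $W$ so that the bundle-decorated double point relation in $\PCob^{*,1}(X)$ reads
$$[X, \Ls] = \beta_0 - [\Pb(\Ls \oplus \Oc)] \bullet \beta_{-1} + e(\Ls) \bullet R_1,$$
where $R_1$ is a residual class of the same structural shape as $[X, \Ls]$, but with $(X, \Ls)$ replaced by $(P_1, \Mc_1)$ pulled back to $X$. A natural candidate for $W$ is a suitable deformation to the normal cone inside the projective completion $P = \Pb_X(\Ls^\vee \oplus \Oc)$, whose two canonical sections carry the line bundles $\Oc$ and $\Ls$ respectively; the line bundle $\Nc$ is built from $\Oc_P(1)$ twisted by the exceptional divisor. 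Pushing forward the resulting relation along $\pi \colon P \to X$ and keeping careful track of how $\Nc$ restricts to the two virtual Cartier divisors in $W_\infty$ should yield the claimed first-order identity.

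Once this base case is established, the recursive definition $P_{i+1} = \Pb_{P_i}(\Mc_i \oplus \Oc), \ \Mc_{i+1} = \Mc_i(1)$ is set up precisely so that the same construction applied to $(P_1, \Mc_1)$ produces exactly the next term of the series. Iterating $N$ times gives
$$[X,\Ls] = \sum_{i=0}^{N} e(\Ls)^i \bullet \bigl(\beta_i - [\Pb(\Ls \oplus \Oc)] \bullet \beta_{i-1}\bigr) + e(\Ls)^{N+1} \bullet R_{N+1},$$
and the infinite identity is interpreted in the evident completion of $\PCob^{*,1}(X)$ along the ideal generated by $e(\Ls)$, or, equivalently, modulo every power of $e(\Ls)$.

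The main obstacle is the geometric engineering of $(W, \Nc)$: we need the double point relation carried with $\Nc$ to produce precisely the coefficients $\beta_i - [\Pb(\Ls \oplus \Oc)] \bullet \beta_{i-1}$, and the residual $R_1$ must be structurally identical to $[X, \Ls]$ so that the recursion genuinely self-perpetuates. Once this cobordism is identified --- which is the real content of the argument in \cite{AY} --- the remainder of the proof is routine bookkeeping using the push-pull and projection formulas.
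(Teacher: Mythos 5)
Your overall strategy is exactly what the paper does: the proof in the text is a one-line iteration of the one-step relation from \cite{AY} Lemma 6.18, which is precisely the ``first-order identity plus self-similar residual'' you describe, applied repeatedly to kill the remainder terms. So the geometric engineering you outline is the content of the cited lemma, not something you need to redo here, and the recursion is set up exactly as you say.

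There is, however, one genuine error in your write-up: the identity is \emph{not} to be interpreted in a completion of $\PCob^{*,1}(X)$ along $e(\Ls)$, nor merely modulo every power of $e(\Ls)$. The Euler class $e(\Ls)$ of a line bundle is nilpotent (\cite{AY} Lemma 6.2, used throughout this paper, e.g.\ in Lemma \fref{NilpTopChern}), so the sum $\sum_{i\ge 0}$ has only finitely many nonzero terms and the remainder $e(\Ls)^{N+1}\bullet R_{N+1}$ in your iteration literally vanishes for $N$ large. This is what the paper means by ``kill all the $T_i$,'' and it matters: the lemma is used later as an honest identity in the ring $\PCob^{*,1}(X)$ --- for instance, Lemma \fref{ClassOfProjectivizedLine} solves for $[\Pb(\Ls\oplus\Oc)]$ by inverting a series whose higher coefficients are nilpotent, and Theorem \fref{FGLTheorem} compares coefficients in the actual ring. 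An identity that only holds modulo all powers of $e(\Ls)$, or in a formal completion, would not suffice for these applications unless you separately argue the ring is separated for that filtration; nilpotence makes the issue moot.
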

\begin{proof}
This is what you get when you apply the relation of \cite{AY} Lemma 6.18 infinitely many times to kill all the $T_i$.
\end{proof}

As an easy consequence of the above lemma, we get the following formula taking care of the class $[\Pb_1]$.

\begin{lem}\label{ClassOfProjectivizedLine}
We have the equality
$$[\Pb(\Ls \oplus \Oc)] = { \sum_{i \geq 0} \pi^*[P_{i+1}] \bullet e(\Ls)^i \over \sum_{i \geq 0} \pi^*[P_i] \bullet e(\Ls)^i} \in \PCob^*(X),$$
where $P_i$ are as in Lemma \fref{ClassOfLine}.
\end{lem}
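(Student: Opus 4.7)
The plan is to deduce this identity from Lemma~\fref{ClassOfLine} by applying the natural forgetful ring homomorphism
$$\phi \colon \PCob^{*,1}(X) \to \PCob^*(X), \qquad [V \to X, \Mc] \mapsto [V \to X],$$
and then performing a short algebraic rearrangement. First I would check that $\phi$ is well-defined: the double-point cobordism relation used to cut out $\PCob^{*,1}$ from $\Mc_+^{*,1}$ attaches the same vector bundle $E$ uniformly to all four cycles $W_0, A, B, \Pb_{A \times_W B}(\Oc(A) \oplus \Oc)$, so its image under $\phi$ is precisely the double-point relation defining $\PCob^*$; and $\phi$ is a ring homomorphism with respect to $\bullet_\otimes$ on the source because both $\bullet_\otimes$ and $\bullet$ use the same homotopy fibre product on underlying cycles.

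Next, I would apply $\phi$ to the equality in Lemma~\fref{ClassOfLine}. Using $\phi([X, \Ls]) = [X \xrightarrow{\mathrm{Id}} X] = 1$ and $\phi(\beta_i) = \pi^*[P_i]$, the identity becomes
$$1 = \sum_{i \geq 0} e(\Ls)^i \bullet \pi^*[P_i] \; - \; [\Pb(\Ls \oplus \Oc)] \bullet \sum_{i \geq 0} e(\Ls)^i \bullet \pi^*[P_{i-1}]$$
in $\PCob^*(X)$. Because $P_{-1} = \emptyset$ the second sum effectively starts at $i=1$, and re-indexing $j = i-1$ rewrites it as $e(\Ls) \bullet A$, where $A := \sum_{i \geq 0} e(\Ls)^i \bullet \pi^*[P_i]$ is the denominator of the claimed formula. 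The displayed equation thus reads
$$1 = A \bullet \bigl(1 - e(\Ls) \bullet [\Pb(\Ls \oplus \Oc)] \bigr),$$
which shows that $A$ is a unit in $\PCob^*(X)$. Since $\pi^*[P_0] = 1$, we have the tautological decomposition $A = 1 + e(\Ls) \bullet A_1$ with $A_1 := \sum_{i \geq 0} e(\Ls)^i \bullet \pi^*[P_{i+1}]$, and solving for $[\Pb(\Ls \oplus \Oc)]$ yields $[\Pb(\Ls \oplus \Oc)] = A_1/A$, as claimed.

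The main obstacle will be bookkeeping rather than any deep input: one must verify carefully that $\phi$ really does preserve the relations defining $\PCob^{*,1}$, and that the infinite-sum manipulations are legitimate in the same formal-power-series sense in $e(\Ls)$ used to state Lemma~\fref{ClassOfLine} (so that in particular the fraction $A_1/A$ is unambiguous given that $A$ is a unit with leading coefficient~$1$). Once these points are granted, the entire argument is two lines of algebra applied to the forgetful image of Lemma~\fref{ClassOfLine}.
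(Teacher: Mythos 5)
Your reduction to Lemma \fref{ClassOfLine} via the forgetful map $\phi$ is set up correctly ($\phi$ is indeed well defined and multiplicative for $\bullet_\otimes$), and the identity
$$1 = A \bullet \bigl(1 - e(\Ls)\bullet[\Pb(\Ls\oplus\Oc)]\bigr), \qquad A := \sum_{i\ge 0} e(\Ls)^i\bullet\pi^*[P_i],$$
is a correct consequence; it does show that $A$ is a unit. But the final step does not follow. Writing $A = 1 + e(\Ls)\bullet A_1$ with $A_1 := \sum_{i\ge 0}e(\Ls)^i\bullet\pi^*[P_{i+1}]$, your identity rearranges to
$$e(\Ls)\bullet\bigl(A_1 - A\bullet[\Pb(\Ls\oplus\Oc)]\bigr) = 0,$$
and you then cancel $e(\Ls)$ to conclude $A\bullet[\Pb(\Ls\oplus\Oc)] = A_1$. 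That cancellation is illegitimate: $e(\Ls)$ is nilpotent, hence a zero divisor, so your equation only determines $[\Pb(\Ls\oplus\Oc)]$ modulo the annihilator of $e(\Ls)$. The case $\Ls=\Oc$ makes the loss of information visible: the forgotten equation collapses to $1=1$, while the lemma still asserts $[\Pb_X(\Oc\oplus\Oc)]=\pi^*[P_1]$. The forgetful map simply discards the part of Lemma \fref{ClassOfLine} that carries the class you are trying to compute.

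The paper's proof applies a different $\PCob^*(X)$-linear operator to Lemma \fref{ClassOfLine}, namely $\Pb\colon [V\to X,\Mc]\mapsto[\Pb_V(\Mc\oplus\Oc)\to X]$. Under this operator the left-hand side $[X,\Ls]$ becomes exactly the unknown $[\Pb_X(\Ls\oplus\Oc)]$ and $\beta_i$ becomes $\pi^*[P_{i+1}]$, so the resulting equation reads
$$[\Pb(\Ls\oplus\Oc)] = A_1 - [\Pb(\Ls\oplus\Oc)]\bullet(A-1)$$
(the $i=0$ term of the second sum contributes nothing because $P_{-1}=\emptyset$), i.e.\ $[\Pb(\Ls\oplus\Oc)]\bullet A = A_1$, where the unknown now appears with the unit coefficient $A$ rather than with an overall factor of $e(\Ls)$. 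If you want to salvage your strategy you must replace $\phi$ by such an operator; as written, the argument has a genuine gap at the division step.
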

\begin{proof}
It is clear that we have a well defined $\PCob^*$-linear transformation $\Pb: \PCob^{*,1}(X) \to \PCob^{*-1}(X)$ defined by the formula
$$[V \to X, \Ls] \mapsto [\Pb_V(\Ls \oplus \Oc) \to X].$$
Applying this transformation to Lemma \fref{ClassOfLine}, and noting that 
\begin{align*}
\Pb (\beta_i) &= \Pb \pi^* [P_i, \Mc_i] \\
&= \pi^*[\Pb_{P_i}(\Mc_i \oplus \Oc)] \\
&= \pi^*[P_{i+1}],
\end{align*}
we can conclude that
$$[\Pb(\Ls \oplus \Oc)] = \sum_{i \geq 0} e(\Ls)^i \bullet \bigl(\pi^*[P_{i+1}] - [\Pb(\Ls \oplus \Oc)] \bullet \pi^*[P_{i}] \bigr).$$
The claim follows by solving for $[\Pb(\Ls \oplus \Oc)]$.
\end{proof}

It is then easy to find formulas for the classes $[\Pb_2]$ and $[\Pb_3]$

\begin{lem}\label{ClassOfP2P3}
We have the equalities
\begin{align}
[\Pb(\Ls_1 \oplus (\Ls_1 \otimes \Ls_2) \oplus \Oc)] &= \sum_{i,j} \gamma_{ij} e(\Ls_1)^i \bullet e(\Ls_2)^j \\
\notag \\
[\Pb_{\Pb_X(\Ls_1 \oplus (\Ls_1 \otimes \Ls_2))}(\Oc(-1) \oplus \Oc)] &= \sum_{i,j} \phi_{ij} e(\Ls_1)^i \bullet e(\Ls_2)^j
\end{align}
in $\PCob^*(X)$ for some $\gamma_{ij}, \phi_{ij} \in \PCob^*(pt)$ not depending on $X, \Ls_1$ or $\Ls_2$.
\end{lem}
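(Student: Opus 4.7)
The plan is to imitate the strategy of Lemmas \fref{ClassOfLine} and \fref{ClassOfProjectivizedLine}, but this time working in the module $\PCob^{*,2}(X)$ of precobordism cycles carrying two line bundles. Let $\pi$ denote the structure morphism $X \to \Spec(A)$.

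First I would establish a two-line-bundle analog of Lemma \fref{ClassOfLine}: an expansion
$$[X, \Ls_1, \Ls_2] = \sum_{i,j \geq 0} e(\Ls_1)^i \bullet e(\Ls_2)^j \bullet \pi^* \mu_{ij} \in \PCob^{*,2}(X),$$
with each $\mu_{ij} \in \PCob^{*,2}(\Spec A)$ a universal class built from iterated projective bundles carrying distinguished line bundles, depending only on $A$. This can be obtained by iterating the relation of \cite{AY} Lemma 6.18 twice---first on the $\Ls_1$-slot (treating the $\Ls_2$-slot as being carried along), and then on the $\Ls_2$-slot---substituting Lemma \fref{ClassOfProjectivizedLine} to eliminate each occurrence of $[\Pb(\Ls_k \oplus \Oc)]$ and absorb it into universal coefficients.

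Next I would define two $\PCob^*(X)$-linear transformations $T_\gamma, T_\phi : \PCob^{*,2}(X) \to \PCob^{*-2}(X)$ by extending linearly
\begin{align*}
T_\gamma([V \to X, \Ls, \Mc]) &= [\Pb_V(\Ls \oplus (\Ls \otimes \Mc) \oplus \Oc) \to X], \\
T_\phi([V \to X, \Ls, \Mc]) &= [\Pb_{\Pb_V(\Ls \oplus (\Ls \otimes \Mc))}(\Oc(-1) \oplus \Oc) \to X].
\end{align*}
That these descend to $\PCob^{*,2}(X)$ is verified by base-changing the defining projective bundle constructions along a double-point cobordism over $\Pb^1 \times X$, exactly as for the transformation $\Pb$ in Lemma \fref{ClassOfProjectivizedLine}. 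Applying $T_\gamma$ and $T_\phi$ to the expansion of the first step (and using compatibility with pullbacks) yields the desired formulas with $\gamma_{ij} := T_\gamma(\mu_{ij})$ and $\phi_{ij} := T_\phi(\mu_{ij})$ in $\PCob^*(\Spec A) = \PCob^*(pt)$.

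The main obstacle is the first step: one must ensure that iterating \cite{AY} Lemma 6.18 in the two-line-bundle setting produces only universal coefficients, with every non-universal piece (those involving the classes $[\Pb(\Ls_k \oplus \Oc)]$) systematically absorbed via Lemma \fref{ClassOfProjectivizedLine}. Bookkeeping the resulting double power series and verifying its convergence in the same formal sense as in the previous lemmas is where the real work lies; once this is clean, the application of $T_\gamma$ and $T_\phi$ is essentially mechanical.
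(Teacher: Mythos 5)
Your proposal is correct and follows essentially the same route as the paper: there, too, the class of $X$ decorated with the relevant bundle data is expanded as a universal power series in $e(\Ls_1)$ and $e(\Ls_2)$ via Lemmas \fref{ClassOfLine} and \fref{ClassOfProjectivizedLine}, after which a $\PCob^*$-linear projectivization transformation is applied. The only cosmetic difference is that the paper works in the already-defined theory of cycles with rank-$2$ vector bundles, writing $[X, \Ls_1 \oplus (\Ls_1 \otimes \Ls_2)] = [X,\Ls_1]\bullet_\otimes\bigl([X,\Oc]\bullet_\oplus[X,\Ls_2]\bigr)$ and then applying the simpler transformations $\Pb$ and $\wtil\Pb$, whereas you introduce a two-line-bundle variant and fold the formation of $\Ls\oplus(\Ls\otimes\Mc)$ into the transformations $T_\gamma$ and $T_\phi$.
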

\begin{proof}
The proof is quite easy. Denote by $\Pb$ and $\wtil \Pb$ the $\PCob^*$-linear natural transformations from $\PCob^{i,j}$ to $\PCob^{i-j}$ defined by the formulas
$$[V \to X, E] \mapsto [\Pb_V(E \oplus \Oc) \to X]$$
and
$$[V \to X, E] \mapsto [\Pb_{\Pb_V(E)}(\Oc(-1) \oplus \Oc) \to X]$$
respectively. The desired formula is obtained in each case by applying Lemma \fref{ClassOfLine} to the right hand side of
$$[X, \Ls_1 \oplus (\Ls_1 \otimes \Ls_2)] = [X, \Ls_1] \bullet_{\otimes} \bigl([X, \Oc] \bullet_\oplus [X, \Ls_2] \bigr),$$
replacing the instances of $\PCob(\Ls \oplus \Oc)$ by power series using Lemma \fref{ClassOfProjectivizedLine}, and then finally applying either $\Pb$ or $\wtil \Pb$.
\end{proof}

We are now ready to prove the main theorem of the section.

\begin{thm}\label{FGLTheorem}
Suppose $\pi: X \to \Spec(A)$ is a quasi-projective derived scheme over a Noetherian ring $A$ of finite Krull dimension, and suppose $\Ls_1$ and $\Ls_2$ are line bundles on $X$. Then
$$e(\Ls_1 \otimes \Ls_2) = \sum_{i,j} a_{ij} e(\Ls_1)^i \bullet e(\Ls_2)^j \in \PCob^*(X)$$
where $a_{ij} \in \PCob^*(pt)$ are the coefficients of the formal group law (\fref{FGLEq}) on globally generated line bundles constructed in \cite{AY}.
\end{thm}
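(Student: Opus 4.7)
The plan is to derive the identity directly from the geometric form of the formal group law (\fref{GeometricFGL}), which holds by construction of $\PCob^*$ for \emph{arbitrary} line bundles, not merely the globally generated ones. Recall that (\fref{GeometricFGL}) expresses $c_1(\Ls_1 \otimes \Ls_2)$ in terms of $c_1(\Ls_1)$, $c_1(\Ls_2)$, $c_1(\Ls_1 \otimes \Ls_2)$ itself, and the three projective bundle classes $[\Pb_1]$, $[\Pb_2]$, $[\Pb_3]$. By Lemmas \fref{ClassOfProjectivizedLine} and \fref{ClassOfP2P3} each $[\Pb_i]$ admits a universal power series expansion in $e(\Ls_1), e(\Ls_2)$ with coefficients in $\PCob^*(pt)$ depending only on the universal classes $\beta_i = \pi^*[P_i, \Mc_i]$; in particular these coefficients do not depend on $X$, $\Ls_1$ or $\Ls_2$.

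Substituting these expansions into (\fref{GeometricFGL}) produces an implicit identity of the shape
$$c_1(\Ls_1 \otimes \Ls_2) = S\bigl(e(\Ls_1), e(\Ls_2)\bigr) + c_1(\Ls_1 \otimes \Ls_2) \bullet T\bigl(e(\Ls_1), e(\Ls_2)\bigr),$$
where $S$ and $T$ are universal power series over $\PCob^*(pt)$ and $T$ is divisible by $e(\Ls_1) \bullet e(\Ls_2)$, since every occurrence of $c_1(\Ls_1 \otimes \Ls_2)$ on the right of (\fref{GeometricFGL}) carries precisely the prefactor $c_1(\Ls_1) \bullet c_1(\Ls_2)$. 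Iterating the substitution $c_1(\Ls_1 \otimes \Ls_2) \mapsto S + c_1(\Ls_1 \otimes \Ls_2) \bullet T$ pushes the remaining instance of $c_1(\Ls_1 \otimes \Ls_2)$ to arbitrarily high order in $e(\Ls_1) \bullet e(\Ls_2)$, which is precisely the same kind of formal finiteness already exploited in Lemma \fref{ClassOfLine}; the net result is an expansion
$$e(\Ls_1 \otimes \Ls_2) = \sum_{i,j} b_{ij} e(\Ls_1)^i \bullet e(\Ls_2)^j$$
with coefficients $b_{ij} \in \PCob^*(pt)$ visibly universal, i.e.\ independent of the data $(X, \Ls_1, \Ls_2)$.

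It then remains to identify $b_{ij}$ with $a_{ij}$. For this I specialise to $X = \Pb^n \times \Pb^m$ with $\Ls_1 = \Oc(1, 0)$ and $\Ls_2 = \Oc(0, 1)$: in this globally generated setting the equation (\fref{FGLEq}) holds by the very construction of $a_{ij}$ in \cite{AY}, so the two universal expressions for $e(\Ls_1 \otimes \Ls_2)$ must agree. The weak projective bundle formula of \cite{AY} ensures that the monomials $e(\Ls_1)^i \bullet e(\Ls_2)^j$ with $i \leq n$, $j \leq m$ are linearly independent over $\PCob^*(pt)$, which forces $a_{ij} = b_{ij}$ in this range; letting $n$ and $m$ grow closes the argument.

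The step I expect to be the main obstacle is the bookkeeping around the implicit equation: one must confirm that the iterative substitution really yields honest coefficients $b_{ij} \in \PCob^*(pt)$ and that universality is preserved along the way. Because the dependence on $c_1(\Ls_1 \otimes \Ls_2)$ enters the right hand side of (\fref{GeometricFGL}) only through a factor of $c_1(\Ls_1) \bullet c_1(\Ls_2)$, each round of the iteration strictly increases the order in the two Euler classes, which matches the formal framework already relied on in \cite{AY} and Lemma \fref{ClassOfLine}; once this is in place, the rest of the argument is a clean specialisation-and-uniqueness manoeuvre.
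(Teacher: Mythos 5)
Your proposal is correct and follows essentially the same route as the paper: substitute the universal power-series expansions of $[\Pb_1], [\Pb_2], [\Pb_3]$ from Lemmas \fref{ClassOfProjectivizedLine} and \fref{ClassOfP2P3} into (\fref{GeometricFGL}), solve the resulting implicit equation for $e(\Ls_1 \otimes \Ls_2)$ (your iteration is just the geometric-series inversion the paper performs by division, legitimate because $e(\Ls_1) \bullet e(\Ls_2)$ is nilpotent), and identify the resulting universal coefficients with the $a_{ij}$ by specializing to $\Pb^n \times \Pb^m$ and using the weak projective bundle formula.
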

\begin{proof}
First of all, we can apply (\fref{GeometricFGL}) to conclude that
$$e(\Ls_1 \otimes \Ls_2) = {e(\Ls_1) + e(\Ls_2) - e(\Ls_1) \bullet e(\Ls_2) \bullet [\Pb_1 \to X] \over 1 + e(\Ls_1) \bullet e(\Ls_2) \bullet ([\Pb_2 \to X] - [\Pb_3 \to X])} \in \PCob^*(X).$$
Lemmas \fref{ClassOfProjectivizedLine} and \fref{ClassOfP2P3} allow us to replace the classes $[\Pb_i \to X]$ by natural power series in $e(\Ls_1)$ and $e(\Ls_2)$, whose coefficients do not depend on $X, \Ls_1$ or $\Ls_2$, and therefore
$$e(\Ls_1 \otimes \Ls_2) = \sum_{i,j} a'_{ij} e(\Ls_1)^i \bullet e(\Ls_2)^j$$
for some $a'_{ij} \in \PCob^*(pt)$ and for all $X, \Ls_1$ and $\Ls_2$. But since the coefficients $a_{ij}$ of (\fref{FGLEq}) were defined by computing the class $e(\Oc(1,1))$ on $\PCob^*(\Pb^n \times \Pb^m)$ as $n$ and $m$ tend to infinity, we see that $a'_{ij}=a_{ij}$ for all $i,j \geq 0$, proving the claim.
\end{proof}

\subsection{Universal property of the universal precobordism}\label{UnivPropSubSect}

The construction of the universal precobordism rings $\PCob^*(X)$ in terms of free generators and relations gives them a universal property. Since the cohomological universal property is not explicitly written down anywhere, we will write down a nice universal property in this subsection (see Theorem \fref{UniversalPropOfPreCob}). In order to state the theorem, we need to make some preliminary definitions. Throughout this section $A$ will be a Noetherian ring of finite Krull dimension, $d \Sc ch_A$ will be the homotopy category of quasi-projective derived schemes over $A$ and $\Rc^*$ is the category of commutative graded rings (not graded commutative). 

\begin{defn}
A functor $\Fc: d \Sc ch_A^\op \to \Rc^*$ is \emph{additive} if the natural inclusions $X \hookrightarrow X \coprod Y$ and $Y \hookrightarrow X \coprod Y$ induce an isomorphism 
$$\Fc \left( X \coprod Y \right) \xrightarrow{\cong} \Fc(X) \times \Fc(Y)$$
of graded rings.
\end{defn}

The main definition, we are going to be working with, is the following: 

\begin{defn}\label{OrientedCohomDef}
An \emph{oriented cohomology theory} $\Hb^*$ on $d \Sc ch_A$ consists of
\begin{enumerate}
\item[($O1$)] an additive functor $\Hb^*: d \Sc ch_A^\op \to \Rc^*$;
\item[($O2$)] for every quasi-smooth projective morphism $f: X \to Y$ of pure relative dimension, we get a \emph{Gysin pushforward morphism}
$$f_!: \Hb^*(X) \to \Hb^*(Y)$$
which is required to be a morphism of $\Hb^*(Y)$-modules, and which is allowed to not preserve the grading.
\end{enumerate}
Note that such a data allows one to define the \emph{Euler class} $e(\Ls)$ of a line bundle $\Ls$ on $X$ as $s_0^* s_{0!} (1_X)$, where $s_0: X \to \Ls$ is the zero section. This data is required to satisfy the following compatibility conditions:
\begin{enumerate}
\item[($Fun$)] the Gysin pushforwards are required to be functorial;

\item[($PP$)] given a homotopy Cartesian diagram
\begin{equation*}
\begin{tikzcd}
X' \arrow[->]{r}{f'} \arrow[->]{d}{g'} & Y' \arrow[->]{d}{g} \\
X \arrow[->]{r}{f} & Y
\end{tikzcd}
\end{equation*}
with $f$ projective and quasi-smooth (of pure relative virtual dimension), then $f'_! g'^* (\alpha) = g^* f_!(\alpha)$ for all $\alpha \in \Hb^*(X)$;

\item[($Norm$)] if $\Ls$ is a line bundle on $X$ and $i: D \hookrightarrow X$ is the inclusion of a derived vanishing locus of a global section of $\Ls$, then 
$$e(\Ls) = i_!(1_D);$$

\item[($FGL$)] the Euler classes of line bundles are nilpotent and there exists elements $b_{ij} \in \Hb^*(pt)$ for $i,j \geq 0$ so that
$$e(\Ls_1 \otimes \Ls_2) = \sum_{i,j} b_{ij} e(\Ls_1)^i \bullet e(\Ls_2)^j$$
for all $X, \Ls_1$ and $\Ls_2$.
\end{enumerate}
\end{defn}

\begin{ex}
Both $K^0$ and $\PCob^*$ are oriented cohomology theories on $d \Sc ch_A$. Note that we have to regard $K^0(X)$ as a graded ring concentrated in degree 0, and therefore we do not want to make restrictions on how Gysin pushforwards affect the grading. 
\end{ex}

\begin{rem}
Why does this definition deserve the name ``oriented cohomology theory''? It does not look at all similar to the definition used by Levine and Morel in \cite{LM}. For example, here the formal group law is one of the axioms whereas in \cite{LM} it was a consequence of the projective bundle formula (which is not an axiom for us). But as we already noted in Section \fref{FGLSubSect}, the failure of homotopy property seems to make it impossible to deduce the validity of formal group laws for line bundles that are not globally generated. Hence, Levine-Morel style characterization seems not to produce a useful notion in this generality. 
\end{rem}

Of course, it will turn out that $\PCob^*$ is the universal oriented cohomology theory. Let us start with an easy Lemma.

\begin{lem}\label{MapFromCycleGroup}
Suppose $\Hb^*$ is a oriented cohomology theory on $d \Sc ch_A$. Then there is a unique natural transformation $\eta': \Mc^*_+ \to \Hb^*$ commuting with pullbacks, pushforwards and preserving the ring structure.
\end{lem}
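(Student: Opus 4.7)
The plan is to define $\eta'$ on generators by $\eta'([V \xrightarrow{f} X]) := f_!(1_V)$ and extend linearly, then to verify the required compatibilities using only the axioms of an oriented cohomology theory.

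For uniqueness, I would observe that every generator $[V \xrightarrow{f} X]$ of $\Mc_+^*(X)$ is the Gysin pushforward along $f$ of the multiplicative unit $[V \xrightarrow{\mathrm{Id}} V]$ of $\Mc_+^*(V)$. Any natural transformation $\eta'$ preserving the ring structure must send the unit of $\Mc_+^*(V)$ to $1_V \in \Hb^*(V)$, and if it further commutes with Gysin pushforwards then the formula $\eta'([V \xrightarrow{f} X]) = f_!(1_V)$ is forced.

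For existence, I would check the four compatibilities in sequence. Well-definedness and additivity are immediate, since $\Mc_+^*(X)$ is free Abelian on equivalence classes of cycles. Compatibility with pullbacks along $g: X' \to X$ follows from the push-pull axiom $(PP)$: writing the homotopy Cartesian square with top row $f': V \times_X X' \to X'$ and left column $g': V \times_X X' \to V$, one has $g^* f_!(1_V) = f'_!(g'^*(1_V)) = f'_!(1_{V \times_X X'})$, which is exactly $\eta'(g^*[V \to X])$. Compatibility with Gysin pushforwards along a projective quasi-smooth $h: X \to Y$ reduces to functoriality $(Fun)$: $h_! f_!(1_V) = (h \circ f)_!(1_V)$. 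Preservation of units is the computation $\mathrm{Id}_!(1_X) = 1_X$.

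The one step that is not a direct invocation of a single axiom — and so the main obstacle — is multiplicativity. Given $[V_1 \xrightarrow{f} X]$ and $[V_2 \xrightarrow{g} X]$, I need
$$f_!(1_{V_1}) \bullet g_!(1_{V_2}) = (g \circ f')_!(1_{V_1 \times_X V_2}),$$
where $f': V_1 \times_X V_2 \to V_2$ is the base change of $f$. Using that $g_!$ is an $\Hb^*(X)$-module map (the projection formula built into $(O2)$), the left side equals $g_!\bigl(g^* f_!(1_{V_1}) \bullet 1_{V_2}\bigr) = g_!\bigl(g^* f_!(1_{V_1})\bigr)$. Applying $(PP)$ to the defining Cartesian square of $V_1 \times_X V_2$ rewrites $g^* f_!(1_{V_1}) = f'_!(1_{V_1 \times_X V_2})$, and functoriality then gives $g_! f'_! = (g \circ f')_!$, completing the check. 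All remaining verifications — such as the fact that disjoint union on the cycle side matches addition on the $\Hb^*$ side — follow from the additivity axiom $(O1)$.
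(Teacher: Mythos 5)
Your proposal is correct and follows essentially the same route as the paper: define $\eta'$ on cycles by $[V \xrightarrow{f} X] \mapsto f_!(1_V)$, note uniqueness is forced by preservation of units and compatibility with pushforwards, and verify multiplicativity via the $\Hb^*(X)$-module property of $f_!$ combined with $(PP)$ and $(Fun)$. The paper phrases the multiplicativity computation slightly more compactly by reusing the already-established compatibility of $\eta'$ with pullbacks and pushforwards, but the underlying argument is identical.
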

\begin{proof}
Indeed, such a transformation must preserve fundamental classes, and therefore
\begin{align*}
\eta \bigl([V \xrightarrow{f} X] \bigr) &= \eta' \bigl( f_!(1_V) \bigr) \\
&= f_! \bigl( \eta' (1_V) \bigr) \\
&= f_! \bigl( 1_V^{\Hb^*} \bigr),
\end{align*}
where $1_V^{\Hb^*} \in \Hb^*(V)$ is the multiplicative unit. The morphisms $\eta'$ preserve addition since $\Hb^*$ was assumed to be additive. They commute with pushforwards by $(Fun)$ and with pullbacks by $(PP)$. To prove that $\eta'$ preserves multiplication, we compute
\begin{align*}
\eta'([V_1 \xrightarrow{f_1} X]) \bullet \eta'([V_2 \xrightarrow{f_2} X]) &= f_{1!}(1_{V_1}^{\Hb^*}) \bullet \eta'([V_2 \xrightarrow{f_2} X]) \\
&= f_{1!} \bigl(f_1^*(\eta'([V_2 \xrightarrow{f_2} X]))  \bigr) \quad (\text{linearity of $f_!$}) \\
&= \eta'\bigl( f_{1!} (f_1^*([V_2 \xrightarrow{f_2} X]))  \bigr) \\
&= \eta' \bigl( [V_1 \xrightarrow{f_1} X] \bullet [V_2 \xrightarrow{f_2} X] \bigr). 
\end{align*}
We have shown that $\eta'$ satisfies all the desired properties, so we are done.
\end{proof}

In order to show that the above morphisms $\eta$ descend the double point cobordism relations, we need the following result (cf. \cite{LP} Lemma 3.3).

\begin{lem}\label{ClassOfProjectivizedLine2}
Let $\Hb^*$ be an oriented cohomology theory, and let $X \in d\Sc ch_A$. If $\Ls$ is a line bundle on $X$, then 
$$\pi_! \bigl( 1_{\Pb(\Ls \oplus \Oc)} \bigr) = -\sum_{i,j \geq 1} b_{ij} e(\Ls)^{i-1} \bullet e(\Ls^\vee)^{j-1}$$
where $b_{ij}$ are as in $(FGL)$ and $\pi$ is the natural projection $\Pb(\Ls \oplus \Oc) \to X$. 
\end{lem}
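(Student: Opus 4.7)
The plan is to exploit the two natural sections $\sigma_0, \sigma_\infty : X \to \Pb := \Pb(\Ls \oplus \Oc)$ coming from the summand inclusions $\Oc \hookrightarrow \Ls \oplus \Oc$ and $\Ls \hookrightarrow \Ls \oplus \Oc$, together with the canonical sequence $0 \to \Oc(-1) \to \Ls \oplus \Oc \to Q \to 0$ on $\Pb$, in which $Q \cong \pi^*\Ls \otimes \Oc(1)$. I will write $\xi := e(\Oc(1))$, $u := \pi^* e(\Ls)$ and $u^\vee := \pi^* e(\Ls^\vee)$ throughout.

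First I identify the two classes $\sigma_{\infty!}(1_X)$ and $\sigma_{0!}(1_X)$. Composing $\Oc(-1) \hookrightarrow \Ls \oplus \Oc$ with projection onto the $\Oc$-summand gives a global section of $\Oc(1)$ whose derived vanishing locus is exactly $\sigma_\infty$, so by $(Norm)$ we have $\sigma_{\infty!}(1_X) = \xi$. Analogously, projecting onto the $\Ls$-summand produces a section of $Q$ vanishing along $\sigma_0$, so $(Norm)$ together with $(FGL)$ applied to $Q \cong \pi^*\Ls \otimes \Oc(1)$ gives $\sigma_{0!}(1_X) = e(Q) = F(u, \xi)$. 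Next I compute $\pi_!(\xi^j)$ for $j \geq 1$: the normal bundle of $\sigma_\infty$ is $\Ls^\vee$, so the self-intersection formula (the projection formula combined with $(Norm)$) yields $\sigma_\infty^* \sigma_{\infty!}(1_X) = e(\Ls^\vee)$, hence $\xi^2 = u^\vee \xi$ and inductively $\xi^j = (u^\vee)^{j-1} \xi$. Combined with $\pi_!(\xi) = 1$ arising from push-pull for $\pi \circ \sigma_\infty = \mathrm{id}_X$, the projection formula gives $\pi_!(\xi^j) = e(\Ls^\vee)^{j-1}$ for every $j \geq 1$.

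The main computation is to expand $F(u, \xi) - \xi = u + \sum_{i, j \geq 1} b_{ij} u^i \xi^j$ and push forward. Since $\pi_!(F(u, \xi)) = 1 = \pi_!(\xi)$ the pushforward of the difference vanishes, and feeding in the formulas above produces
\begin{equation*}
0 \;=\; e(\Ls) \pi_!(1_\Pb) \;+\; e(\Ls) \sum_{i, j \geq 1} b_{ij} e(\Ls)^{i-1} e(\Ls^\vee)^{j-1},
\end{equation*}
which is $e(\Ls)$ times the desired equality.

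The hard part will be cancelling this factor of $e(\Ls)$. My strategy is to rerun the argument under the identification $\Pb(\Ls \oplus \Oc) \cong \Pb(\Ls^\vee \oplus \Oc)$, which swaps the roles of the two sections and of $\Ls, \Ls^\vee$; using commutativity of the formal group law (so $b_{ij} = b_{ji}$) this yields the parallel identity multiplied by $e(\Ls^\vee)$ in place of $e(\Ls)$. A divisibility argument exploiting the relation $F(e(\Ls), e(\Ls^\vee)) = e(\Oc) = 0$ in $\Hb^*(X)$, together with the injectivity of $\pi^*$ provided by the section $\sigma_\infty$, should then pin down $\pi_!(1_\Pb) + \sum_{i, j \geq 1} b_{ij} e(\Ls)^{i-1} e(\Ls^\vee)^{j-1}$ as zero, giving the claimed formula.
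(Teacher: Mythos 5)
Your computation is sound, and uses only the axioms, right up to the last step: $(Norm)$ applied to the sections of $\Oc(1)$ and of $Q\cong\pi^*\Ls\otimes\Oc(1)$ cutting out the two linear subbundles gives $\sigma_{\infty!}(1_X)=\xi$ and $\sigma_{0!}(1_X)=F(u,\xi)$, and the projection formula gives $\pi_!(\xi^j)=e(\Ls^\vee)^{j-1}$ for $j\geq 1$. But what you actually prove is only
$$e(\Ls)\bullet\Bigl(\pi_!(1_{\Pb})+\sum_{i,j\geq 1}b_{ij}\,e(\Ls)^{i-1}\bullet e(\Ls^\vee)^{j-1}\Bigr)=0,$$
and the factor $e(\Ls)$ cannot be cancelled: by axiom $(FGL)$ it is nilpotent, hence a zero-divisor. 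The failure is essential, not cosmetic. Take $\Ls=\Oc$: then $e(\Ls)=0$, your whole argument collapses to $0=0$, yet the lemma asserts the genuinely nontrivial identity $\pi_!(1_{\Pb^1_X})=-b_{11}$. The proposed repair via $\Pb(\Ls\oplus\Oc)\cong\Pb(\Ls^\vee\oplus\Oc)$ cannot work either: the relation $F\bigl(e(\Ls),e(\Ls^\vee)\bigr)=e(\Oc)=0$ forces $e(\Ls^\vee)=-e(\Ls)\bullet(1+\text{nilpotent})$, so $e(\Ls^\vee)$ is a unit multiple of $e(\Ls)$ and the dual identity carries exactly the same information. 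Injectivity of $\pi^*$ does not help, since the unknown $\pi_!(1_\Pb)$ lives downstairs.

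What is missing is one further geometric input producing an equation in which $\pi_!(1_\Pb)$ appears with coefficient $1$ rather than $e(\Ls)$. The paper imports this from the double point degeneration of \cite{LP}, Lemma 3.3, carried out with derived blow ups. A fix that stays entirely within your formalism is to pass to $\Pb\times_X\Pb$: the diagonal is the derived vanishing locus of the composite $\Oc(-1)_1\to\Ls\oplus\Oc\to Q_2$, a section of the line bundle $\pi^*\Ls\otimes\Oc(1)_1\otimes\Oc(1)_2$, so $(Norm)$ and $(FGL)$ give $[\Delta]=F\bigl(F(u,\xi_1),\xi_2\bigr)$. Pushing this forward to $X$, the left side is $\pi_!(1_\Pb)$; on the right, your formulas $\pi_!(\xi^j)=e(\Ls^\vee)^{j-1}$ and the mirror identity $\pi_!\bigl(F(u,\xi)^i\bigr)=e(\Ls)^{i-1}$ (for $i,j\geq1$), together with $(PP)$ applied to the two projections, evaluate the expansion to $2\,\pi_!(1_\Pb)+\sum_{i,j\geq1}b_{ij}e(\Ls)^{i-1}\bullet e(\Ls^\vee)^{j-1}$, and the stated formula follows. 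As written, however, the proposal has a genuine gap at the cancellation step.
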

\begin{proof}
The proof is the same as in \cite{LP} with the distinction that all blow ups should be derived.
\end{proof}

We are finally ready to prove the main result of this subsection: $\PCob^*$ is the universal oriented cohomology theory.

\begin{thm}\label{UniversalPropOfPreCob}
Suppose $\Hb^*$ is a oriented cohomology theory on $d \Sc ch_A$. Then there is a unique natural transformation $\eta: \PCob^* \to \Hb^*$ commuting with pullbacks, pushforwards and preserving the ring structure.
\end{thm}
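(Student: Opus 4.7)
The plan is to first invoke Lemma \fref{MapFromCycleGroup} to produce a natural transformation from the cycle groups $\Mc^*_+$, and then show that it descends to the quotient $\PCob^*$ by verifying that the defining relations are killed in $\Hb^*$. Uniqueness is immediate: since $[V \xrightarrow{f} X] = f_!(1_V)$ in $\PCob^*$ and such classes generate $\PCob^*(X)$, any $\eta$ compatible with pushforwards and units must send these generators to $f_!(1_V^{\Hb^*})$, so it is determined.

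For existence, Lemma \fref{MapFromCycleGroup} provides a natural transformation $\eta' : \Mc^*_+ \to \Hb^*$ commuting with pullbacks, pushforwards, and ring structures. I would verify $\eta'$ vanishes on the defining relations of $\PCob^*$ in the presentation by homotopy fibre relations plus the geometric FGL relation. The homotopy fibre relations are easy: given a projective quasi-smooth $W \to \Pb^1 \times X$ with first projection $\pi_1 : W \to \Pb^1$, both fibres $W_0, W_\infty$ are derived vanishing loci of global sections of the same line bundle $\pi_1^*\Oc_{\Pb^1}(1)$. By axiom $(Norm)$, both Gysin pushforwards $i_{0!}(1_{W_0}^{\Hb^*})$ and $i_{\infty !}(1_{W_\infty}^{\Hb^*})$ equal $e(\pi_1^*\Oc_{\Pb^1}(1)) \in \Hb^*(W)$, so further pushforward to $X$ gives the required equality.

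The main obstacle is the geometric FGL relation (\fref{GeometricFGL}). Using $(Norm)$ one sees that $\eta'$ carries first Chern classes $c_1(\Ls) \in \PCob^*$ to Euler classes $e(\Ls) \in \Hb^*$. Then Lemma \fref{ClassOfProjectivizedLine2}, which is stated for an arbitrary oriented cohomology theory, gives an explicit formula for $\pi_!(1_{\Pb(\Ls \oplus \Oc)})$ in $\Hb^*$ as a universal power series in $e(\Ls)$ and $e(\Ls^\vee)$ whose coefficients are built from the FGL coefficients $b_{ij}$ supplied by $(FGL)$. I would iterate this kind of computation, using push-pull $(PP)$ and the projection formula (a formal consequence of $(O2)$), to express the images $\eta'([\Pb_i])$ for $i = 1, 2, 3$ as explicit power series in $e(\Ls_1), e(\Ls_2)$ and the $b_{ij}$. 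Substituted into (\fref{GeometricFGL}), the relation reduces to a formal power series identity in $e(\Ls_1)$ and $e(\Ls_2)$, where nilpotence of Euler classes (part of $(FGL)$) makes all manipulations legitimate.

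The final step is to show this power series identity follows from axiom $(FGL)$. Here I would essentially run the argument of Section \fref{FGLSubSect} in reverse inside $\Hb^*$: the same formal rearrangement that produced Theorem \fref{FGLTheorem} from (\fref{GeometricFGL}) can be inverted, using that $1 + e(\Ls_1) \bullet e(\Ls_2) \bullet (\eta'([\Pb_2]) - \eta'([\Pb_3]))$ is a unit in the appropriate nilpotent completion, to recover (\fref{GeometricFGL}) from the FGL expansion $e(\Ls_1 \otimes \Ls_2) = \sum b_{ij} e(\Ls_1)^i \bullet e(\Ls_2)^j$. Once (\fref{GeometricFGL}) is verified in $\Hb^*$, together with all relations it generates under operations, $\eta'$ descends to the desired natural transformation $\eta : \PCob^* \to \Hb^*$, which automatically inherits compatibility with pullbacks, pushforwards, and ring structures from $\eta'$.
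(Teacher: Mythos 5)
Your overall architecture (uniqueness via generation by fundamental classes, existence via Lemma \fref{MapFromCycleGroup} followed by descent to the quotient) matches the paper, and your treatment of the homotopy fibre relations via $(Norm)$ is correct. The genuine gap is in your verification of the geometric formal group law relation (\fref{GeometricFGL}). The paper deliberately avoids this relation and instead descends $\eta'$ through the \emph{double point} presentation (\fref{DoublePointCobordismEq}), where the only geometric input needed is Lemma \fref{ClassOfProjectivizedLine2}: the terms $[W_0]-[A]-[B]$ become $F_{\Hb}(e(\Oc(A)),e(\Oc(B)))-e(\Oc(A))-e(\Oc(B))$ by $(Norm)$ and $(FGL)$, and the $\Pb_{A\times_W B}(\Oc(A)\oplus\Oc)$ term is computed by that single lemma to cancel it. Your route instead requires knowing the classes $\eta'([\Pb_2])$ and $\eta'([\Pb_3])$ in an \emph{arbitrary} oriented cohomology theory $\Hb^*$, where $\Pb_2$ is a rank-three projective bundle and $\Pb_3$ an iterated one. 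Lemma \fref{ClassOfProjectivizedLine2} only handles $\Pb(\Ls\oplus\Oc)$, and the lemmas you implicitly lean on for the higher classes (Lemmas \fref{ClassOfLine}, \fref{ClassOfProjectivizedLine}, \fref{ClassOfP2P3}) are proved using the explicit structure of $\PCob^{*,1}$ from \cite{AY}; they are statements about the universal theory with coefficients in $\PCob^*(pt)$, and transporting them to $\Hb^*$ presupposes the very transformation $\eta$ you are constructing. ``Iterating the computation'' is not a routine step here.

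Even granting power series expansions $\eta'([\Pb_i])=P_i(e(\Ls_1),e(\Ls_2))$ over $\Hb^*(pt)$, your final step asserts that the resulting identity
$$F_\Hb(x,y)\bigl(1+xy(P_2-P_3)\bigr)=x+y-xyP_1$$
follows formally from $(FGL)$ by inverting the rearrangement of Theorem \fref{FGLTheorem}. It does not: that identity is not a formal consequence of $F_\Hb$ being a formal group law, but encodes the specific values of the classes $[\Pb_i]$ in $\Hb^*$, i.e., the content of the extended double point relations of \cite{LP}/\cite{AY}. In effect your plan requires reproving the hard direction of the equivalence of the two presentations inside $\Hb^*$. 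The repair is either to carry out that (substantial) computation, or, more economically, to do what the paper does: note that the two ideals of relations coincide in $\Mc^*_+$ and verify descent against the double point relations, where Lemma \fref{ClassOfProjectivizedLine2} does all the work.
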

\begin{proof}
We only have to show that the natural transformation $\eta'$ of \label{MapFromCycleGroup} descends the double point cobordism relations (\fref{DoublePointCobordismEq}). What this boils down to is showing that 
$$\eta' \bigl([\Pb_{A \times_W B}(\Oc(A) \oplus \Oc) \to X] \bigr) = - \sum_{i,j \geq 1} b_{ij} e(\Oc(A))^i \bullet e(\Oc(B))^j \in \Hb^*(W).$$
Denote by $i$ the embedding $A \times_W B \hookrightarrow W$. We note that $i_!(1_{A \times_W B})$ is just the product $e(\Oc(A)) \bullet e(\Oc(B))$, and the restrictions of $\Oc(A)$ and $\Oc(B)$ to $A \times_W B$ are duals of each other. We can now compute that
\begin{align*}
\eta' \bigl([\Pb_{A \times_W B}(\Oc(A) \oplus \Oc) \to X] \bigr) &= - i_! \bigl( \sum_{i,j \geq 1} b_{ij} e(\Oc(A))^{i-1} \bullet e(\Oc(-A))^{j-1} \bigr) \quad (\text{Lemma \fref{ClassOfProjectivizedLine2}}) \\
&=- i_! \bigl( \sum_{i,j \geq 1} b_{ij} e(\Oc(A))^{i-1} \bullet e(\Oc(B))^{j-1} \bigr) \\
&= \sum_{i,j \geq 1} b_{ij} e(\Oc(A))^{i} \bullet e(\Oc(B))^{j}, \quad (\text{linearity of $i_!$})
\end{align*}
which finishes the proof.
\end{proof}

\section{Chern classes in precobordism rings}\label{ChernClassConsSect}

The purpose of this section is to construct \emph{Chern classes} for the precobordism rings $\PCob^*(X)$. More precisely, given a quasi-projective derived scheme over a Noetherian ring $A$ and a vector bundle $E$ on $X$ of rank $r$, we would like to construct classes
$$c_i(E) \in \PCob^i(X)$$
for $1 \leq i \leq r$. We will also show that the newly constructed Chern classes satisfy the usual expected properties.

Before beginning the construction, we recall that given a vector bundle $E$ of rank $r$ on a derived scheme $X$, we can define its \emph{Euler class} (or \emph{top Chern class}) $e(E) \in \PCob^r(X)$ as the cycle class $[V(s) \to X]$ of the inclusion of the derived vanishing locus of any global section $s$ of $E$ (the class does not depend on $s$). The following easy results about Euler classes will be used in the construction of Chern classes.

\begin{lem}\label{MultTopChern}
Let $X$ be a quasi-projective derived scheme over a Noetherian ring $A$, and let 
$$0 \to E' \to E \to E'' \to 0$$
be a short exact sequence of vector bundles of rank $r', r, r''$ respectively on $X$. Then 
$$e(E) = e(E') \bullet e(E'')$$
in $\PCob^r(X)$.
\end{lem}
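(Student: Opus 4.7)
The plan is to use Proposition \fref{VanishingLociInExtensions} to decompose the derived vanishing locus of a section of $E$ into a two-step factorisation through the vanishing locus of the induced section of $E''$, and then to combine this with the projection formula. The main conceptual work is already done in Proposition \fref{VanishingLociInExtensions}; the rest is formal.

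Concretely, I would pick any global section $s$ of $E$ (the zero section always works), so that by the definition of the Euler class recalled just before the Lemma, $e(E) = [V(s) \to X]$ and this is independent of $s$. Let $s''$ denote the image of $s$ in $\Gamma(X; E'')$, let $i_1: V(s'') \hookrightarrow X$ be its derived vanishing locus (so $i_{1!}(1_{V(s'')}) = e(E'')$), and let $s' \in \Gamma(V(s''); i_1^* E')$ be the canonical lift produced by Construction \fref{SectionLift}. Proposition \fref{VanishingLociInExtensions} then identifies the composite $V(s') \hookrightarrow V(s'') \hookrightarrow X$ with the inclusion $V(s) \hookrightarrow X$.

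Since Euler classes do not depend on the choice of section and are compatible with pullback (because the derived vanishing locus of a section commutes with base change), one has $[V(s') \to V(s'')] = e(i_1^* E') = i_1^*\bigl(e(E')\bigr)$. Functoriality of Gysin pushforwards together with the projection formula then gives
\[
e(E) \;=\; (i_1 \circ i_2)_!(1_{V(s')}) \;=\; i_{1!}\bigl(i_1^* e(E')\bigr) \;=\; e(E') \bullet i_{1!}(1_{V(s'')}) \;=\; e(E') \bullet e(E''),
\]
as required. The only substantive input is Proposition \fref{VanishingLociInExtensions}; note in particular that no non-vanishing hypothesis on $s$, $s'$ or $s''$ is needed, since the zero sections are always available and the construction of $s'$ from $s$ is automatic.
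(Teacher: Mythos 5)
Your proof is correct and follows essentially the same route as the paper: both factor $V(s)\hookrightarrow X$ through $V(s'')$ via Construction \fref{SectionLift} and Proposition \fref{VanishingLociInExtensions}, identify $[V(s')\to V(s'')]$ with $e(i^*E')=i^*e(E')$, and conclude by the projection formula. The only difference is cosmetic (you run the chain of equalities starting from $e(E)$ rather than from $e(E')\bullet e(E'')$).
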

\begin{proof}
Let $s$ be a global section of $E$, which maps to a global section $s''$ of $E''$. Denote the inclusion $V(s'') \hookrightarrow X$ by $i$. As $e(E'') = i_!(1_{V(s'')})$, we can use the projection formula to conclude that 
\begin{align*}
e(E') \bullet e(E'') &= e(E') \bullet i_!(1_{V(s'')}) \\
&= i_! \bigl(e(i^* E') \bigr).
\end{align*} 
We can now use the canonical section $s'$ of $i^* E'$ as in Construction \fref{VanishingLociInExtensions} to see that $e(i^*E')$ is represented by $[V(s) \to V(s'')]$, where $V(s)$ is the vanishing locus of $s$ in $X$. Hence $i_!\bigl(e(i^*E')\bigr) = [V(s) \to X]$ and the claim follows.
\end{proof}

\begin{lem}\label{NilpTopChern}
Let $X$ be a quasi-projective derived scheme over a Noetherian ring $A$, and let $E$ be a vector bundle of rank $r$ on $X$. Then the Euler class $e(E)$ of $E$ is a nilpotent element of $\PCob^*(X)$.
\end{lem}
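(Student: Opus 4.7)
The plan is to induct on the rank $r$ of $E$. The base case $r=1$ establishes nilpotence of Euler classes of arbitrary line bundles on the quasi-projective $X$; the inductive step reduces to lower rank by passing to the projective bundle $\pi:\Pb(E)\to X$ and its tautological sequence.

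For the base case I would first treat a globally generated line bundle $\Ls$ with generating sections $s_0,\ldots,s_N$. Since $e(\Ls)=[V(s_i)\to X]$ is independent of the chosen section and the bivariant product corresponds to derived fibre product over $X$,
$$e(\Ls)^{N+1}=[V(s_0)\times_X\cdots\times_X V(s_N)\to X].$$
Iterated application of Proposition \ref{VanishingLociInExtensions} identifies this with the derived vanishing locus of the single section $(s_0,\ldots,s_N)\in\Gamma(X,\Ls^{\oplus(N+1)})$, which is empty since the $s_i$ have no common zero, so $e(\Ls)^{N+1}=0$. For a general line bundle $\Ls$ on quasi-projective $X$, I would write $\Ls\cong\Ls_1\otimes\Ls_2^\vee$ with $\Ls_1,\Ls_2$ very ample. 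Applying the geometric relation (\ref{GeometricFGL}) to $\Ls_2\otimes\Ls_2^\vee\cong\Oc$ and using $e(\Oc)=0$ gives
$$e(\Ls_2^\vee)\bigl(1-e(\Ls_2)\bullet[\Pb_X(\Ls_2\oplus\Oc)\to X]\bigr)=-e(\Ls_2),$$
and since $e(\Ls_2)$ is nilpotent the parenthesized factor is a unit, making $e(\Ls_2^\vee)$ nilpotent. A second application of (\ref{GeometricFGL}) to $\Ls\cong\Ls_1\otimes\Ls_2^\vee$ then expresses $e(\Ls)$ as a sum of nilpotent terms multiplied by the inverse of $1+(\text{nilpotent})$, so $e(\Ls)$ is nilpotent.

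For the inductive step, on $\Pb(E)$ the tautological short exact sequence $0\to\Oc(-1)\to\pi^*E\to Q\to 0$ gives, by Lemma \ref{MultTopChern},
$$\pi^*e(E)=e(\pi^*E)=e(\Oc(-1))\bullet e(Q),$$
with $e(\Oc(-1))$ nilpotent by the base case applied to the line bundle $\Oc(-1)$ on the quasi-projective derived scheme $\Pb(E)$, and $e(Q)$ nilpotent by the inductive hypothesis applied to the rank $r-1$ bundle $Q$. To descend nilpotence from $\pi^*e(E)$ back to $e(E)$, I would apply Corollary \ref{InjPullback}: if there exists a class $\xi\in\PCob^{r-1}(\Pb(E))$ with $\pi_!(\xi)=1$, then the projection formula gives $e(E)^n=\pi_!\bigl(\xi\bullet(\pi^*e(E))^n\bigr)$, which vanishes once $(\pi^*e(E))^n=0$.

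The main obstacle is constructing such a class $\xi$, which is essentially a fragment of the projective bundle formula. Although the full Theorem \ref{PBF} of this paper is proved only in Section \ref{PBFSubSect}, I would try to extract $\xi$ as an explicit polynomial in $e(\Oc(1))$ with $\PCob^*(X)$-coefficients using the weak projective bundle formula and the structural computation of $\Bb^{*,1}$ on projective bundles from \cite{AY} (Theorems 6.12 and 6.13). Alternatively, one could iterate the argument along the full flag bundle $\Fl(E)\to X$, where $\pi^*E$ splits into a chain of line bundle quotients, and verify pullback injectivity at each intermediate projective bundle step.
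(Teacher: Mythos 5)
Your base case is sound --- the derived-fibre-product argument for globally generated line bundles and the two applications of (\fref{GeometricFGL}) to handle $\Ls\cong\Ls_1\otimes\Ls_2^\vee$ reproduce in detail what the paper outsources to Lemma 6.2 of \cite{AY}. The inductive step, however, has a genuine gap exactly where you flag it: the class $\xi\in\PCob^*(\Pb(E))$ with $\pi_!(\xi)=1$. At this point in the paper no such class is available for a non-trivial projective bundle. The weak projective bundle formula of \cite{AY} and its Theorems 6.12--6.13 only describe trivial bundles $\Pb^n_X$ and the theory $\PCob^{*,1}$; extending this to general $\Pb(E)$ is precisely the content of Theorem \fref{PBF}, whose proof lies downstream of the Chern class machinery. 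Worse, the paper's only device for producing a class pushing forward to $1$ over a modification of $X$ --- the class $\eta_{X,E}$ of Construction \fref{ChernClassCons}, built from Lemma \fref{BreakingApartUnity} --- explicitly invokes the nilpotence of Euler classes, so routing the proof of Lemma \fref{NilpTopChern} through any of that machinery would be circular. Iterating along the flag bundle does not help: each stage is again a non-trivial projective bundle presenting the same descent problem.

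The gap is avoidable, and the paper's actual argument sidesteps the projective bundle entirely. Since $X$ is quasi-projective over the Noetherian ring $A$, one can choose a line bundle $\Ls$ and finitely many sections $s_1,\dots,s_n\in\Gamma(X;E^\vee\otimes\Ls)$ with empty common vanishing locus; these assemble into a surjection $E^{\oplus n}\to\Ls$ of vector bundles. Lemma \fref{MultTopChern} applied to the resulting short exact sequence, together with $e(E^{\oplus n})=e(E)^n$, gives $e(E)^n=\alpha\bullet e(\Ls)$ for some $\alpha$, and nilpotence of $e(\Ls)$ (your base case) finishes the proof with no induction on rank and no descent along $\pi$. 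I recommend replacing your inductive step with this argument.
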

\begin{proof}
The case $r=1$ is Lemma 6.2 of \cite{AY}. Note that if a section $s$ of $E^\vee \otimes \Ls$ doesn't vanish on some open subset $U \subset X$, then the naturally associated map $s' \in \Hom_X(E, \Ls)$ is surjective on $U$. As $X$ is quasi-projective and $A$ is Noetherian, we can find a line bundle $\Ls$ and global sections $s_1,...,s_n \in \Gamma(X; E^\vee \otimes \Ls)$ so that the total vanishing locus of the sections $s_i$ is empty. It follows that they induce a surjective morphism $E^{\oplus n} \to \Ls$ of vector bundles, and therefore we must have
$$e(E)^n = e(E^{\oplus n}) = \alpha \bullet e(\Ls)$$
for some $\alpha \in \PCob^{rn-1}(X)$ by Lemma \fref{MultTopChern}. The claim now follows from the nilpotence of $e(\Ls)$ and the commutativity of the product $\bullet$.
\end{proof}
\begin{rem}
There are much more natural proofs of Lemma \fref{NilpTopChern}, for example by following ideas of \cite{AY} Section 6.2 and reducing to the globally generated case by deforming the vector bundle $E \otimes \Ls$ to $E$. Such an approach would have the advantages of being much more natural and yielding a precise formula for $e(E)$ in terms of $e(E)$, $e(\Ls)$ and some geometric data, but it would have the disadvantage of being considerably longer.
\end{rem}

\subsection{Construction of Chern classes}\label{ChernClassConsSubSect}

In this section, we are going to construct the Chern classes. The key fact that we are going to need is the following observation:

\begin{lem}\label{FiltrationOnBlowUpLem}
Let $X$ be a derived scheme, $E$ a vector bundle on $X$ and $s \in \Gamma(X;E)$ a global section of $E$. Now the canonical equivalence of \fref{ProjectiveBlowUp} and the canonical surjection $E^\vee \to \Oc(1)$ on $\Pb(E)$ yield us a natural surjection
\begin{equation}\label{FundamentalSurjection}
E^\vee \to \Oc(-\Ec)
\end{equation}
of vector bundles on $\bl_Z(X)$, where $\Ec$ is the exceptional divisor.
\end{lem}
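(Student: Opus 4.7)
The strategy is to obtain the surjection by pulling back the canonical surjection $E^\vee \to \Oc(1)$ on $\Pb(E)$ via the identification of $\bl_Z(X)$ as a derived subscheme of $\Pb(E)$ supplied by Proposition \ref{ProjectiveBlowUp}.

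First I would apply Proposition \ref{ProjectiveBlowUp} to identify $\bl_Z(X) \simeq Y$, where $Y \hookrightarrow \Pb(E)$ is the derived vanishing locus of $s'' \in \Gamma(\Pb(E); Q)$, the image of $s$ under the quotient map $E \to Q$ from the canonical exact sequence
\begin{equation*}
0 \to \Oc(-1) \to E \to Q \to 0.
\end{equation*}
Restricting the canonical surjection $E^\vee \to \Oc_{\Pb(E)}(1)$ (dual to $\Oc(-1) \hookrightarrow E$) to $Y$ produces a surjection $E^\vee \to \Oc_Y(1)$ of vector bundles on $\bl_Z(X)$.

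What remains is to identify $\Oc_Y(1)$ with $\Oc_Y(-\Ec)$. For this I would invoke the lifting mechanism of Construction \ref{SectionLift} applied to the displayed exact sequence: the preferred nullhomotopy of $s''|_Y$ coming from Lemma \ref{PreferredPathLemma} lifts $s|_Y$ to a canonical global section $s'$ of $\Oc_Y(-1)$. According to the proof of Proposition \ref{ProjectiveBlowUp}, the derived vanishing locus of $s'$ is naturally identified with the pullback of $Z \hookrightarrow X$ along the blow up projection, that is, with the exceptional divisor $\Ec \hookrightarrow Y$. A section of a line bundle whose derived vanishing locus is an effective Cartier divisor canonically identifies the line bundle with $\Oc$ of that divisor, so $\Oc_Y(-1) \simeq \Oc_Y(\Ec)$, and dualizing gives $\Oc_Y(1) \simeq \Oc_Y(-\Ec)$ as required.

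There is no real obstacle here: each ingredient (Proposition \ref{ProjectiveBlowUp}, the canonical surjection on $\Pb(E)$, and Construction \ref{SectionLift}) is natural in pullbacks of the data $(E, s)$, so the resulting surjection $E^\vee \to \Oc(-\Ec)$ inherits the same naturality. The only mildly delicate point is the identification of $\Oc_Y(-1)$ with $\Oc_Y(\Ec)$ via $s'$; this can be verified locally on $X$, where after reducing to the model case treated at the end of the proof of Proposition \ref{ProjectiveBlowUp} it becomes a direct computation on the blow up of $\Ab^n$ at the origin.
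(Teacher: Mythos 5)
Your proof is correct and follows essentially the same route as the paper: restrict the canonical surjection $E^\vee \to \Oc(1)$ along the identification $\bl_Z(X) \simeq Y \hookrightarrow \Pb(E)$ of Proposition \fref{ProjectiveBlowUp}, and then identify $\Oc_Y(1)$ with $\Oc(-\Ec)$ using that $\Ec$ is by construction the derived vanishing locus of the canonical section $s'$ of $\Oc_Y(-1)$. The paper's proof is just a terser version of the same observation, so there is nothing to add.
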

\begin{proof}
The only thing that hasn't been explicitly stated already is that the line bundle $\Oc(1)$ on $\Pb(E)$ restricts to $\Oc(-\Ec)$ on the blow up $\bl_Z(X)$. But this is easy as $\Ec$ is by construction the vanishing locus of a global section of restriction of $\Oc(-1)$.
\end{proof}

Another result we are going to need in the construction is the following.

\begin{lem}\label{BreakingApartUnity}
Let $X$ be a quasi-projective derived scheme over a Noetherian ring $A$ of finite Krull dimension, and let $E$ be a vector bundle on $X$. Let $s \in \Gamma(X; E)$ be a global section of $E$, and let $Z \hookrightarrow X$ be the inclusion of the derived vanishing locus of $s$. Denote by $\wtil X$ the derived blow up $\bl_Z(X)$. Then
$$1_X = \sum_{i=0}^\infty e(E)^i \bullet [\Pb(E \oplus \Oc) \to X]^i \bullet \bigl([\wtil X \to X] - [\Pb_\Ec(\Oc(\Ec) \oplus \Oc) \to X]\bigr) \in \PCob^*(X),$$
where $\Ec$ is the exceptional divisor of $\wtil X$.
\end{lem}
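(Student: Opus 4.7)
The plan is to derive the asserted identity from a single application of the double point cobordism relation (\fref{DoublePointCobordismEq}) to the derived deformation to the normal cone of $Z$ in $X$, and then to re-express the resulting finite relation as a geometric series, which is legitimate since $e(E)$ is nilpotent by Lemma \fref{NilpTopChern}.

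First, I form the derived blow up $W := \bl_{Z \times \{\infty\}}(X \times \Pb^1)$, regarded as a projective quasi-smooth scheme over $\Pb^1 \times X$. Since $Z = V(s)$, the derived conormal bundle of $Z \times \{\infty\} \subset X \times \Pb^1$ is $E^\vee\vert_Z \oplus \Oc$, and standard properties of derived blow ups (Theorem 4.1.5 of \cite{Khan}), together with Proposition \fref{ProjectiveBlowUp}, yield the following identifications: $W_0 \simeq X$; the fibre $W_\infty$ is a sum of two virtual Cartier divisors $A + B$, where $A = \wtil X$ is the strict transform of $X \times \{\infty\}$ and $B = \Pb_Z(E\vert_Z \oplus \Oc)$ is the exceptional divisor of $W$; and the derived intersection $A \times_W B$ is canonically identified with $\Ec$, sitting inside $B$ via the linear embedding $\Pb_Z(E\vert_Z) \hookrightarrow \Pb_Z(E\vert_Z \oplus \Oc)$.

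Next, I compute the line bundle $\Oc_W(A)\vert_\Ec$. Since $\Oc_W(A + B) \simeq \pi^* \Oc_{\Pb^1}(\infty)$ is trivial on $W_\infty$, we have $\Oc_W(A)\vert_\Ec \simeq \Oc_W(-B)\vert_\Ec$. Using the standard identification of the conormal bundle of the exceptional divisor of a (derived) blow up, $\Oc_W(-B)\vert_B \simeq \Oc_B(1)$, which restricts along $\Ec \hookrightarrow B$ to $\Oc_\Ec(1) \simeq \Oc_{\wtil X}(-\Ec)\vert_\Ec$. Combined with the general isomorphism $\Pb(\Ls \oplus \Oc) \simeq \Pb(\Ls^\vee \oplus \Oc)$ (obtained by twisting with $\Ls^\vee$), this identifies $[\Pb_\Ec(\Oc_W(A)\vert_\Ec \oplus \Oc) \to X]$ with $[\Pb_\Ec(\Oc(\Ec) \oplus \Oc) \to X]$. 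Applying the double point relation to $W$, together with the identity $[\Pb_Z(E\vert_Z \oplus \Oc) \to X] = e(E) \bullet [\Pb(E \oplus \Oc) \to X]$, now yields
$$1_X = [\wtil X \to X] + e(E) \bullet [\Pb(E \oplus \Oc) \to X] - [\Pb_\Ec(\Oc(\Ec) \oplus \Oc) \to X].$$

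Setting $a := e(E) \bullet [\Pb(E \oplus \Oc) \to X]$ and $b := [\wtil X \to X] - [\Pb_\Ec(\Oc(\Ec) \oplus \Oc) \to X]$, the identity above simply reads $b = 1 - a$. Since $a$ is nilpotent by Lemma \fref{NilpTopChern}, $(1-a)^{-1}$ exists and is given by the finite sum $\sum_{i \geq 0} a^i$, so
$$1_X \;=\; b \cdot (1-a)^{-1} \;=\; \sum_{i=0}^\infty a^i \bullet b,$$
which is precisely the claimed formula. The main technical hurdle is justifying the derived-geometric identifications of $W_0$, $W_\infty$, $A$, $B$ and $A \times_W B$ above; however, these reduce by the naturality of derived blow ups to local computations analogous to those already carried out in Proposition \fref{ProjectiveBlowUp}.
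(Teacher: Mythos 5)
Your proof is correct and follows essentially the same route as the paper: both apply the double point relation to $W = \bl_{Z\times\{\infty\}}(X\times\Pb^1)$ to obtain $1_X = [\wtil X \to X] + e(E)\bullet[\Pb(E\oplus\Oc)\to X] - [\Pb_\Ec(\Oc(\Ec)\oplus\Oc)\to X]$ and then conclude by summing a geometric series, which converges by nilpotence of $e(E)$. The only (cosmetic) difference is that the paper realizes the higher relations --- the basic one multiplied by powers of $[\Pb(E\oplus\Oc)\to X]$ --- geometrically via fibre products of $\Pb_W(E\oplus\Oc)$ over $W$, whereas you simply invert $1-a$ in the ring; the two are equivalent, and your identification of $\Oc_W(A)\vert_{A\times_W B}$ with $\Oc(-\Ec)\vert_\Ec$ is a careful justification of a step the paper leaves implicit.
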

\begin{proof}
Consider $W := \bl_{\infty \times Z} (\Pb^1 \times X) \to \Pb^1 \times X$ and note that the fibre of $W$ over $\infty$ is the sum of virtual Cartier divisors $\bl_Z(X)$ and the exceptional divisor $\Ec' \simeq \Pb_Z(E \oplus \Oc)$ intersecting at the exceptional divisor $\Ec$ of $\bl_Z(X)$. The morphism $W \to \Pb^1 \times X$ is therefore a derived double point degeneration over $X$ realizing the relation
\begin{align*}
1_X &= [\wtil X \to X] + [\Pb_Z(E \oplus \Oc) \to X] - [\Pb_\Ec(\Oc(\Ec) \oplus \Oc) \to X] \\
&= [\wtil X \to X] + e(E) \bullet [\Pb(E \oplus \Oc) \to X] - [\Pb_\Ec(\Oc(\Ec) \oplus \Oc) \to X]
\end{align*}
in $\PCob^*(X)$. Moreover, $\Pb_W(E \oplus \Oc) \to \Pb^1 \times X$ realizes the relation
\begin{align*}
[\Pb(E \oplus \Oc) \to X] &= [\Pb_{\wtil X}(E \oplus \Oc) \to X] + e(E) \bullet [\Pb(E \oplus \Oc) \to X]^2 \\
&- [\Pb(E \oplus \Oc) \to X] \bullet [\Pb_\Ec(\Oc(\Ec) \oplus \Oc) \to X]
\end{align*}
and more generally, the $n$-fold derived fibre product of $\Pb_W(E \oplus \Oc)$ over $W$ realizes the relation
\begin{align*}
[\Pb(E \oplus \Oc) \to X]^n &= [\Pb(E \oplus \Oc) \to X]^n \bullet [\wtil X \to X] + e(E) \bullet [\Pb(E \oplus \Oc) \to X]^{n+1} \\
&- [\Pb(E \oplus \Oc) \to X]^n \bullet [\Pb_\Ec(\Oc(\Ec) \oplus \Oc) \to X].
\end{align*}
Combining these these relations and remembering that the Euler classes are nilpotent, we obtain the desired equation. 
\end{proof}

We can now begin our construction.

\begin{cons}[Construction of $c_i(E)$]\label{ChernClassCons}
Suppose $X$ is a quasi-projective derived scheme over a Noetherian ring $A$ of finite Krull dimension, and let $E$ be a vector bundle of rank $r$ on $X$. We are going to construct 

\begin{enumerate}
\item a projective quasi-smooth morphism $\pi_{X,E}: \wtil X_E \to X$ of relative virtual dimension 0 that is moreover natural in pullbacks in the obvious sense;

\item $\pi_{X,E}^* E$ has a natural filtration $E_\bullet$ by vector bundles with line bundles $\Ls_1, ... , \Ls_r$ as the associated graded pieces;

\item a class $\eta_{X,E} \in \PCob^0(\wtil X_E)$ pushing forward to $1_X \in \PCob^0(X)$ that is natural in the sense that given $f: Y \to X$, then if $f'$ is as in the homotopy Cartesian square (by the first item)
\begin{center}
\begin{tikzcd}
\wtil Y_{f^*E} \arrow[->]{r}{f'} \arrow[->]{d}{\pi_{Y,f^*E}} & \wtil X_E \arrow[->]{d}{\pi_{X,E}} \\
Y \arrow[->]{r}{f} & X,
\end{tikzcd}
\end{center}
we have $f'^*(\eta_{X,E}) = \eta_{Y,f^*E}$.
\end{enumerate}
After finding such data, we can define the \emph{Chern classes}
\begin{equation}\label{ChernClassDef}
c_i(E) := \pi_{X,E!} \bigl( s_i(e(\Ls_1),...,e(\Ls_r)) \bullet \eta_{X,E} \bigr)
\end{equation}
where $s_i$ is the $i^{th}$ elementary symmetric polynomial, and the \emph{total Chern class}
\begin{equation}\label{TotalChernClassDef}
c(E) := 1 + c_1(E) + \cdots + c_r(E).
\end{equation}

In order to perform the desired construction, we are going to proceed by induction on the rank $r$. For the base case $r=1$, we set $\pi_{X,\Ls}$ to be the identity morphism $X \to X$, the filtration to be the trivial filtration $0 \subset \Ls$, and the class $\eta_{X, \Ls} \in \PCob^0(X)$ to be $1_X$. 

Suppose then that $r > 1$ and that we have performed the desired construction for all derived schemes $Y$ and all vector bundles $F$ of rank at most $r-1$, and let $E$ be a rank $r$ vector bundle on a derived scheme $X$. Let $Z \hookrightarrow X$ be the derived vanishing locus of the zero section of $E$, and denote by $\wtil X$ the derived blow up $\bl_Z(X)$ whose structure morphism we are going to denote by $\pi$. Note that by Lemma \fref{FiltrationOnBlowUpLem}, there is a natural short exact sequence
\begin{equation}\label{QuotientSeqOnBlowUp}
0 \to \Oc(\Ec) \to E \to Q \to 0
\end{equation}
of vector bundles on $\wtil X$, which provides the first step of the desired filtration.  Moreover, by Lemma \fref{BreakingApartUnity}, the class
$$\eta'_{X,E} := \sum_{i=0}^\infty e(E)^i \bullet [\Pb_{\wtil X}(E \oplus \Oc) \to \wtil X]^i \bullet \bigl(1_{\wtil X} - e(\Oc(\Ec)) \bullet [\Pb_{\wtil X} (\Oc(\Ec) \oplus \Oc) \to \wtil X] \bigr) \in \PCob^*(\wtil X)$$
pushes forward to $1_X \in \PCob^*(X)$. We can now apply the inductive argument to the pair $\wtil X = \bl_Z(X)$, $Q$ to obtain a natural map $\pi_{\bl_Z(X), Q}: \wtil{\bl_Z(X)}_Q \to \bl_Z(X)$, a natural filtration of the vector bundle $\pi_{\bl_Z(X), Q}^*Q$, and a natural class $\eta_{\bl_Z(X),Q} \in \PCob^0(\wtil{\bl_Z(X)}_Q)$ having the desired properties. We now set
\begin{equation}\label{FundamentalMapDef}
\pi_{X,E}: \wtil X_E := \wtil{\bl_Z(X)}_Q \xrightarrow{\pi_{\bl_Z(X), Q}} \bl_Z(X) \xrightarrow{\pi} X
\end{equation}
and
\begin{equation}\label{FundamentalClassDef}
\eta_{X,E} := \eta_{\bl_Z(X), Q} \bullet \pi_{\bl_Z(X), Q}^*(\eta'_{X,E}).
\end{equation}
Moreover, we obtain a natural filtration $E_\bullet$ of $E$ on $\wtil X_E$ by setting $E_1$ to be the pullback of the canonical inclusion $\Oc(\Ec) \to E$ to $\wtil X_E$, and then pulling back the natural filtration of $Q$ along the surjection $E \to Q$. To check that $\eta_{X,E}$ pushes forward to $1_X$, we compute that
\begin{align*}
\pi_{X,E!}(\eta_{X,E}) &= \pi_! \Bigl( \pi_{\bl_Z(X), Q!} \bigl( \eta_{\bl_Z(X), Q} \bullet \pi_{\bl_Z(X), Q}^*(\eta'_{X,E}) \bigr) \Bigr) \\
&= \pi_! \Bigl( \pi_{\bl_Z(X), Q!} \bigl( \eta_{\bl_Z(X), Q} \bigr) \bullet \eta'_{X,E} \Bigr) \quad (\text{projection formula}) \\
&= \pi_! \Bigl( 1_{\bl_Z(X)} \bullet \eta'_{X,E} \Bigr) \quad (\text{induction}) \\
&= \pi_! \Bigl(\eta'_{X,E} \Bigr) \\
&= 1_X. 
\end{align*}
We can therefore define Chern classes using formula (\fref{ChernClassDef}). 
\end{cons}

\subsection{Basic properties of Chern classes}\label{ChernPropertiesSubSect}

Let us start by showing that Chern classes have many desirable properties.

\begin{thm}\label{ChernProperties}
Let $X$ be a quasi-projective derived $A$-scheme for a Noetherian ring $A$, and let $E$ be a vector bundle of rank $r$ on $X$. Now the Chern classes of Construction \fref{ChernClassCons} satisfy the following basic properties:
\begin{enumerate}
\item \emph{Naturality:} if $f: Y \to X$ is a quasi-projective map of derived schemes, then $f^*c_i(E) = c_i(f^* E) \in \PCob^r(Y)$.

\item \emph{Normalization:} $c_r(E) = e(E) \in \PCob^r(X)$.
\end{enumerate}
\end{thm}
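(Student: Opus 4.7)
The plan is to reduce both assertions directly to the naturality built into Construction \fref{ChernClassCons} together with the push-pull and projection formulas recorded in Section \fref{PrecobordismBackground}. No new ideas beyond the construction itself and Lemma \fref{MultTopChern} will be required.

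For naturality, I would unwind the homotopy Cartesian square produced by the construction: given a quasi-projective $f: Y \to X$, one gets $f': \wtil Y_{f^*E} \to \wtil X_E$ lying over $f$ and satisfying $f'^*(\eta_{X,E}) = \eta_{Y,f^*E}$, with the filtration of $\pi_{Y,f^*E}^*(f^*E)$ identified with the $f'$-pullback of $E_\bullet$ and graded pieces $f'^*\Ls_i$. Applying push-pull to commute $f^*$ past $\pi_{X,E!}$, using that $f'^*$ is a ring homomorphism (so it commutes with the elementary symmetric polynomial expression), and using that Euler classes of line bundles commute with pullback, one reads off $f^* c_i(E) = c_i(f^*E)$ in essentially one line.

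For normalization, the key observation is $s_r(x_1,\ldots,x_r) = x_1 \cdots x_r$, so that
\[
c_r(E) = \pi_{X,E!}\bigl(e(\Ls_1) \bullet \cdots \bullet e(\Ls_r) \bullet \eta_{X,E}\bigr).
\]
Iterating Lemma \fref{MultTopChern} along the filtration $0 \simeq E_0 \subset \cdots \subset E_r \simeq \pi_{X,E}^*E$ with successive quotients $\Ls_i$ yields $e(\pi_{X,E}^*E) = e(\Ls_1) \bullet \cdots \bullet e(\Ls_r)$, and since Euler classes of vector bundles are natural under pullback (the derived vanishing locus of the zero section pulls back to the derived vanishing locus of the zero section), this in turn equals $\pi_{X,E}^*e(E)$. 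The projection formula together with $\pi_{X,E!}(\eta_{X,E}) = 1_X$ (established at the end of Construction \fref{ChernClassCons}) then gives $c_r(E) = e(E) \bullet \pi_{X,E!}(\eta_{X,E}) = e(E)$.

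Neither statement presents a real obstacle; both are formal consequences of the construction. The only subtlety worth double checking is that $f'^*$ takes $e(\Ls_i)$ to $e(f'^*\Ls_i)$, which follows directly from the zero-section description of Euler classes and the homotopy Cartesian nature of the pullback square.
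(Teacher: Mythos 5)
Your proposal is correct and follows the paper's own argument essentially verbatim: naturality is read off from the pullback-compatibility built into Construction \fref{ChernClassCons}, and normalization is exactly the chain $c_r(E) = \pi_{X,E!}(e(\Ls_1)\bullet\cdots\bullet e(\Ls_r)\bullet\eta_{X,E}) = \pi_{X,E!}(e(\pi_{X,E}^*E)\bullet\eta_{X,E}) = e(E)$ via Lemma \fref{MultTopChern}, the projection formula, and $\pi_{X,E!}(\eta_{X,E})=1_X$. Your extra care in separating $e(\pi_{X,E}^*E)$ from $\pi_{X,E}^*e(E)$ is a minor refinement of notation the paper elides, not a different route.
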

\begin{proof}
\noindent
\begin{enumerate}
\item This follows immediately from Construction \fref{ChernClassCons}, since it is completely natural in pullbacks.

\item By the definition (\fref{ChernClassDef}) of the top Chern class, we have
\begin{align*}
c_r(E) &= \pi_{X,E!} \bigl( e(\Ls_1)\bullet \cdots \bullet  e(\Ls_r) \bullet \eta_{X,E} \bigr) \\
&= \pi_{X,E!} \bigl( e(E) \bullet \eta_{X,E} \bigr) \quad (\text{Lemma \fref{MultTopChern}}) \\
&= e(E) \bullet \pi_{X,E!} \bigl( \eta_{X,E} \bigr) \quad (\text{projection formula}) \\
&= e(E),
\end{align*}
proving the claim. \qedhere
\end{enumerate}
\end{proof}

Next we are going to show that the classical formula satisfied by the Chern classes of $\Oc(1)$ in the cohomology ring of $\Pb(E)$ holds in our precobordism rings $\PCob^*(\Pb(E))$ as well. This result will be used later in the proof of the Projective Bundle Formula (Theorem \fref{PBF}). The proof follows closely to the proofs of Lemmas  4.1.18 and 4.1.19 in \cite{LM}.

\begin{thm}\label{ProjectiveBundleChernClass}
Let $X$ be a quasi-projective derived scheme over a Noetherian ring $A$ of finite Krull dimension and let $E$ be a vector bundle of rank $r$ on $X$. Then
\begin{align}\label{PbEq1}
0 &= e(E(1)) = \sum_{i = 0}^r (-1)^{i} c_{r-i}(E) \bullet c_1(\Oc_{\Pb(E)} (-1))^i 
\end{align}
and
\begin{align}\label{PbEq2}
0 = e(E^\vee (-1)) = \sum_{i = 0}^r (-1)^{i} c_{r-i}(E^\vee) \bullet c_1(\Oc_{\Pb(E)} (1))^i = 0
\end{align}
in $\PCob^*(\Pb(E))$.
\end{thm}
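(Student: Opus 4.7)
The theorem bundles two independent assertions — that the summation equals $e(E(1))$, and that $e(E(1)) = 0$ — and I would handle each separately. For the vanishing of the Euler classes, I would apply Lemma \fref{MultTopChern} to a twist of the tautological sequence. Twisting $0 \to \Oc(-1) \to E \to Q \to 0$ on $\Pb(E)$ (abbreviating $\pi^* E$ by $E$) by $\Oc(1)$ yields $0 \to \Oc \to E(1) \to Q(1) \to 0$. The trivial line bundle $\Oc$ has the nowhere-vanishing section $1$, so $e(\Oc) = [V(1) \hookrightarrow \Pb(E)] = 0$, and Lemma \fref{MultTopChern} gives $e(E(1)) = e(\Oc) \bullet e(Q(1)) = 0$. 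Dualizing the tautological sequence to $0 \to Q^\vee \to E^\vee \to \Oc(1) \to 0$ and twisting by $\Oc(-1)$ yields $e(E^\vee(-1)) = 0$ in exactly the same way.

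\emph{Reformulating the summation as a pushforward.} It remains to show that the sum
$$S := \sum_{i=0}^r (-1)^i c_{r-i}(E) \bullet c_1(\Oc(-1))^i$$
vanishes. I would argue by induction on $r$. The base $r = 1$ is trivial: $\Pb(E) \simeq X$ and $\Oc(-1) \simeq E$, so $S = c_1(E) - c_1(\Oc(-1)) = 0$. For the induction step, let $\wtil\pi : \wtil{\Pb(E)}_E \to \Pb(E)$ be the map obtained from Construction \fref{ChernClassCons} applied to $E$ on $\Pb(E)$, with filtration line bundles $\Ls_1, \ldots, \Ls_r$ and class $\eta$ satisfying $\wtil\pi_!(\eta) = 1$. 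Combining the projection formula with the polynomial identity $\sum_{i=0}^r (-1)^i s_{r-i}(y_1,\ldots,y_r)\, x^i = \prod_{j=1}^r (y_j - x)$ rewrites
$$S = \wtil\pi_!\!\left( \prod_{j=1}^r \bigl( e(\Ls_j) - \wtil\pi^* c_1(\Oc(-1)) \bigr) \bullet \eta \right),$$
so it suffices to show this pushforward vanishes.

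\emph{The main obstacle.} The essential geometric input ought to be the canonical sub-line-bundle $\Oc_{\Pb(E)}(-1) \hookrightarrow E$ on $\Pb(E)$, which plays the role of a "preselected Chern root" of $E$. My approach is to compare Construction \fref{ChernClassCons} applied directly to $E$ with the variant obtained by first extracting this sub-line-bundle and then applying the construction to the quotient $Q$ of rank $r-1$; after identifying the two, the filtration on the variant manifestly contains $\Oc_{\Pb(E)}(-1)$, so one of the factors $e(\Ls_j) - \wtil\pi^* c_1(\Oc(-1))$ is zero and the whole product vanishes. Carrying out this comparison rigorously — without invoking the full Whitney sum formula or splitting principle, which are both proved later in the paper — is the hardest step, and it requires careful tracking of how the canonical section $\Pb(E) \to \Pb_{\Pb(E)}(E)$ induced by $\Oc_{\Pb(E)}(-1)$ interacts with the first blow-up of the construction (using Proposition \fref{ProjectiveBlowUp} with $s = 0$ and Lemma \fref{FiltrationOnBlowUpLem}), so that the "new" tautological line bundle $\Oc(\Ec)$ can be reconciled with $\wtil\pi^*\Oc_{\Pb(E)}(-1)$.
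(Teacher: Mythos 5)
Your first half is fine: the vanishing of $e(E(1))$ and $e(E^\vee(-1))$ via Lemma \fref{MultTopChern} applied to the twisted tautological sequence and its dual is correct, and is a mild variant of what the paper does (the paper shows directly that the tautological section of $E(1)$ is nowhere vanishing, and gets $e(E^\vee(-1))=0$ from a separate lemma on duals; your route through $0 \to \Oc \to E(1) \to Q(1) \to 0$ is equally valid). The reformulation of the sum as $\wtil\pi_!\bigl(\prod_j (e(\Ls_j) - \wtil\pi^* c_1(\Oc(-1))) \bullet \eta\bigr)$ also matches the paper.

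The gap is in how you propose to kill that product. You want to identify one of the filtration line bundles $\Ls_j$ of Construction \fref{ChernClassCons} with $\wtil\pi^*\Oc_{\Pb(E)}(-1)$ so that a single factor vanishes. But the construction's first blow-up is taken along the derived vanishing locus of the \emph{zero} section of $\pi^* E$ on $\Pb(E)$; by Proposition \fref{ProjectiveBlowUp} this blow-up is (classically) $\Pb(E) \times_X \Pb(E)$, and the first graded piece $\Oc(\Ec)$ is the tautological subbundle of the \emph{second} factor, not $\Oc_{\Pb(E)}(-1)$ pulled back from the first. Replacing the construction's tower by one whose first step is the canonical $\Oc_{\Pb(E)}(-1) \hookrightarrow \pi^*E$ requires knowing that $c_i(E)$ is independent of the chosen splitting tower — which is exactly the splitting principle, proved in the paper only \emph{after} (and by means of) this theorem, so the argument as sketched is circular, and you supply no substitute for the comparison you flag as "the hardest step." The paper's Lemma \fref{TrivTopChernOfTwist} avoids this entirely using the formal group law: since $e(\Ls_j) = F\bigl(e(\Ls_j(1)), e(\Oc(-1))\bigr)$ and $F(x,y) - y$ is divisible by $x$, each factor $e(\Ls_j) - e(\Oc(-1))$ is divisible by $e(\Ls_j(1))$, hence the \emph{whole product} is divisible by $\prod_j e(\Ls_j(1)) = e(E(1)) = 0$ by Lemma \fref{MultTopChern} — no identification of any individual Chern root is needed. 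You should replace your comparison-of-towers step with this divisibility argument.
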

\begin{proof}
We start by considering the vector bundle $E(1)$ on $\Pb(E)$. By basic properties of projective bundles, we have a natural identification of spaces
$$\Gamma\bigl(\Pb(E); E(1)\bigr) \simeq \Hom_X(E, E);$$
let $s$ be the global section of $E(1)$ corresponding to the identity morphism $E \to E$. It is easy to verify locally that the derived vanishing locus of $s$ is empty so that $c_r(E(1)) = 0$. The triviality of $e(E^\vee (-1))$ follows then from Lemma \fref{TrivTopChernOfDual} and the formulas (\fref{PbEq1}) and (\fref{PbEq2}) follow from Lemma \fref{TrivTopChernOfTwist}.
\end{proof}

\begin{lem}\label{TrivTopChernOfDual}
Let $E$ be a vector bundle on a quasi-projective derived scheme $X$ over a Noetherian ring $A$ of finite Krull dimension. If $e(E) = 0$, then $e(E^\vee) = 0$.
\end{lem}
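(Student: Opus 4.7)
The plan is to reduce to the line bundle case using the filtration built in Construction \fref{ChernClassCons}. First I would pull back along the projective quasi-smooth morphism $\pi := \pi_{X,E} \colon \wtil X_E \to X$, over which $\pi^* E$ carries a natural filtration with line bundle successive quotients $\Ls_1, \ldots, \Ls_r$. Dualizing this filtration gives a filtration of $\pi^* E^\vee$ whose associated graded pieces are the $\Ls_i^\vee$. Iterating Lemma \fref{MultTopChern} then yields
$$e(\pi^* E) = \prod_{i=1}^r e(\Ls_i) \qquad \text{and} \qquad e(\pi^* E^\vee) = \prod_{i=1}^r e(\Ls_i^\vee).$$

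Next, I would use the formal group law of Theorem \fref{FGLTheorem} to compare $e(\Ls_i^\vee)$ with $e(\Ls_i)$. Since $\Ls_i \otimes \Ls_i^\vee \simeq \Oc$ admits a nowhere vanishing section, $e(\Oc) = 0$, so $F(e(\Ls_i), e(\Ls_i^\vee)) = 0$. The formal inverse $\chi(x) \in \PCob^*(pt)[[x]]$, characterized by $F(x, \chi(x)) = 0$, has the form $\chi(x) = -x + O(x^2) = x \cdot v(x)$ where $v(x)$ has constant term $-1$. Because $e(\Ls_i)$ is nilpotent (the $r = 1$ case of Lemma \fref{NilpTopChern}), the series $\chi(e(\Ls_i))$ collapses to a finite sum; and uniqueness of the nilpotent solution $y$ to $F(e(\Ls_i), y) = 0$ (which holds because $F(x,y_1) - F(x,y_2)$ factors as $(y_1 - y_2)$ times a unit in the nilpotent setting) forces $e(\Ls_i^\vee) = \chi(e(\Ls_i)) = u_i \bullet e(\Ls_i)$, where $u_i := v(e(\Ls_i))$ is a unit in $\PCob^*(\wtil X_E)$ --- its constant term $-1$ is a unit and the remaining terms are nilpotent.

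Multiplying the identities together then gives $e(\pi^* E^\vee) = u \bullet e(\pi^* E) = u \bullet \pi^* e(E) = 0$ for the unit $u := \prod_i u_i$, using the hypothesis $e(E) = 0$. Finally, since $\pi$ is projective quasi-smooth of relative virtual dimension $0$ and $\pi_!(\eta_{X,E}) = 1_X$ by Construction \fref{ChernClassCons}, Corollary \fref{InjPullback} ensures that $\pi^* \colon \PCob^*(X) \to \PCob^*(\wtil X_E)$ is injective, whence $e(E^\vee) = 0 \in \PCob^*(X)$. The one delicate step is the unit argument of the second paragraph --- making precise that $v(e(\Ls_i))$ is invertible once $e(\Ls_i)$ is nilpotent, and that the formal inverse is genuinely realized by $e(\Ls_i^\vee)$ --- but both pieces are immediate consequences of nilpotence together with the form $F(x,y) = x + y + \cdots$ of the formal group law; the rest is a direct assembly of results already established in the paper.
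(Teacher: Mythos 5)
Your proof is correct and follows essentially the same route as the paper: pass to the splitting space of Construction \fref{ChernClassCons}, use Lemma \fref{MultTopChern} to factor both Euler classes as products over the line bundle graded pieces, and use the formal group law to write $e(\Ls_i^\vee)$ as a unit times $e(\Ls_i)$. The only (cosmetic) difference is that the paper works on $\wtil X_{E^\vee}$ and concludes via the pushforward formula defining $c_r(E^\vee)$, whereas you work on $\wtil X_E$ and descend via the injectivity of $\pi_{X,E}^*$ from Corollary \fref{InjPullback}; your careful justification that $e(\Ls^\vee)=\chi(e(\Ls))$ via uniqueness of nilpotent solutions is a welcome detail the paper leaves implicit.
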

\begin{proof}
By naturality, $e(E^\vee) = c_r(E)$. Consider the map $\pi_{X,E^\vee}: \wtil X_{E^\vee} \to X$, and recall that $E^\vee$ has a natural filtration on $\wtil X_{E^\vee}$ with line bundles $\Ls_i$ as associated graded pieces. By dualizing, we obtain a natural filtration of $E$ \emph{on $\wtil X_{E^\vee}$}, with associated graded pieces $\Ls_i^\vee$. Since $\PCob^*$ has a formal group law, we have 
\begin{align}\label{ChernClassOfDual}
e(\Ls_i) &= e(\Ls_i^\vee) \bullet \sum_j a_j e(\Ls_i^\vee)^j \\
&=: e(\Ls_i^\vee) \bullet \rho (\Ls_i^\vee) \notag
\end{align}  
for some $a_j \in \PCob^*(\Spec(A))$ (and even $a_0 = -1$).

We can then compute that
\begin{align*}
c_r(E^\vee) &= \pi_{X,E^\vee!} \bigl( e(\Ls_1) \bullet \cdots \bullet e(\Ls_r) \bullet \eta_{X,E^\vee} \bigr) \\
&=  \pi_{X,E^\vee!} \bigl( e(\Ls_1^\vee) \bullet \rho (\Ls_1^\vee) \bullet \cdots \bullet e(\Ls_r^\vee) \bullet \rho(\Ls_r^\vee) \bullet \eta_{X,E^\vee} \bigr) \quad (\fref{ChernClassOfDual}) \\
&= \pi_{X,E^\vee!} \bigl( e(E) \bullet \rho (\Ls_1^\vee) \bullet \cdots \bullet \rho(\Ls_r^\vee) \bullet \eta_{X,E^\vee} \bigr) \quad (\text{Lemma \fref{MultTopChern}}) \\
&= \pi_{X,E^\vee!} \bigl( 0 \bullet \rho (\Ls_1^\vee) \bullet \cdots \bullet \rho(\Ls_r^\vee) \bullet \eta_{X,E^\vee} \bigr) \\
&= 0,
\end{align*}
which proves the claim.
\end{proof}

\begin{lem}[Cf. \cite{LM} Lemma 4.1.18]\label{TrivTopChernOfTwist}
Let $E$ be a vector bundle on a quasi-projective derived scheme $X$ over a Noetherian ring $A$ of finite Krull dimension. If $e(E \otimes \Ls) = 0$, then
$$\sum_{i=0}^r (-1)^i c_{r-i}(E) \bullet c_1(\Ls^\vee)^i = 0.$$
\end{lem}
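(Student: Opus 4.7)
The plan is to pull the identity back along $\pi := \pi_{X,E}\colon \wtil X_E \to X$ from Construction \fref{ChernClassCons}, on which $E$ acquires a filtration with line-bundle graded pieces $\Ls_1, \ldots, \Ls_r$. Using the projection formula together with $\pi_!(\eta_{X,E}) = 1_X$ and the naturality of $c_1$, I would first rewrite
$$\sum_{i=0}^r (-1)^i c_{r-i}(E) \bullet c_1(\Ls^\vee)^i = \pi_!\!\left( \sum_{i=0}^r (-1)^i s_{r-i}\bigl(e(\Ls_1),\ldots,e(\Ls_r)\bigr) \bullet e(\pi^*\Ls^\vee)^i \bullet \eta_{X,E} \right).$$
The elementary polynomial identity $\sum_{i=0}^r (-1)^i s_{r-i}(x_1,\ldots,x_r)\, y^i = \prod_{j=1}^r (x_j - y)$ then reduces the lemma to showing that $\prod_{j=1}^r \bigl(e(\Ls_j) - e(\pi^*\Ls^\vee)\bigr) = 0$ in $\PCob^*(\wtil X_E)$.

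Next, I would exploit the hypothesis $e(E \otimes \Ls) = 0$. The filtration on $\pi^*E$ induces one on $\pi^*(E \otimes \Ls)$ whose graded pieces are the line bundles $\Ls_j \otimes \pi^*\Ls$, so iterated application of Lemma \fref{MultTopChern} yields
$$\prod_{j=1}^r e(\Ls_j \otimes \pi^*\Ls) = e\bigl(\pi^*(E \otimes \Ls)\bigr) = \pi^* e(E \otimes \Ls) = 0.$$
The main technical point, and what I expect to be the crux of the argument, is to bridge this vanishing to the one we actually want: for each $j$ the class $e(\Ls_j) - e(\pi^*\Ls^\vee)$ should differ from $e(\Ls_j \otimes \pi^*\Ls)$ by a unit.

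To verify this, let $F(x,y) = \sum a_{ij} x^i y^j$ be the formal group law from Theorem \fref{FGLTheorem}, and let $\iota(t) \in \PCob^*(pt)[[t]]$ be the unique formal power series with $\iota(0) = 0$ and $F(t,\iota(t)) = 0$. Applying Theorem \fref{FGLTheorem} to $\pi^*\Ls \otimes \pi^*\Ls^\vee \simeq \Oc$ gives $F(e(\pi^*\Ls), e(\pi^*\Ls^\vee)) = 0$, and combined with nilpotence of Euler classes (Lemma \fref{NilpTopChern}) this forces $e(\pi^*\Ls^\vee) = \iota(e(\pi^*\Ls))$. In $\PCob^*(pt)[[u,v]]$, the relation $F(\iota(v), v) = 0$ yields a Weierstrass-type factorization $F(u,v) = (u - \iota(v)) \cdot G(u,v)$; since both $F(u,v)$ and $u - \iota(v)$ reduce to $u + v$ modulo $(u,v)^2$, the power series $G$ has constant term $1$ and therefore becomes a unit after substituting the nilpotent classes $e(\Ls_j)$ and $e(\pi^*\Ls)$. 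Thus
$$e(\Ls_j \otimes \pi^*\Ls) = \bigl(e(\Ls_j) - e(\pi^*\Ls^\vee)\bigr) \bullet G_j$$
for some unit $G_j \in \PCob^*(\wtil X_E)$, and multiplying over $j$ gives $\prod_j \bigl(e(\Ls_j) - e(\pi^*\Ls^\vee)\bigr) = \bigl(\prod_j G_j\bigr)^{-1} \bullet \prod_j e(\Ls_j \otimes \pi^*\Ls) = 0$, completing the plan.
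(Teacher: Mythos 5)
Your proposal is correct and follows essentially the same route as the paper: split $E$ on $\wtil X_E$, observe that $\prod_j e(\Ls_j\otimes\pi^*\Ls)=\pi^*e(E\otimes\Ls)=0$ by Lemma \fref{MultTopChern}, show each factor $e(\Ls_j)-e(\pi^*\Ls^\vee)$ is a unit multiple of $e(\Ls_j\otimes\pi^*\Ls)$ via the formal group law, and push forward using the definition of Chern classes and the projection formula. The only (cosmetic) difference is that the paper gets the unit factor from the divisibility of $F(u,v)-v$ by $u$, whereas you use the formal inverse $\iota$ and the factorization $F(u,v)=(u-\iota(v))G(u,v)$; both are valid power-series manipulations.
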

\begin{proof}
Let $\Ls_i$ be the line bundles associated to the natural filtration of $E$ on $\wtil X_{E}$, and let $F$ denote the formal group law of the theory $\PCob^*$. We now have that
$$\Bigl(F\bigl(e(\Ls_1 \otimes \Ls), e(\Ls^\vee)\bigr) - e(\Ls^\vee) \Bigr) \bullet \cdots \bullet \Bigl(F\bigl(e(\Ls_r \otimes \Ls), e(\Ls^\vee)\bigr) - e(\Ls^\vee) \Bigr) = 0$$
since the product is clearly divisible by $e(\Ls_1 \otimes \Ls) \bullet \cdots \bullet e(\Ls_r \otimes \Ls) = e(E) = 0$. On the other hand, the above implies that 
$$\bigl(e(\Ls_1) - e(\Ls^\vee) \bigr) \bullet \cdots \bullet \bigl( e(\Ls_r) - e(\Ls^\vee) \bigr) = 0 \in \PCob(\wtil X_E),$$
and the desired formula follows from the definition (\fref{ChernClassDef}) of Chern classes after pushing forward along $\pi_{X,E}$ and applying projection formula.
\end{proof}

\subsection{Splitting principle and further properties of Chern classes}\label{SplittingPrincipleSubSect}

The purpose of this section is to derive further desirable properties of Chern classes that are going to be necessary in Section \fref{ApplicationsSect}. The main tool is going to be the following theorem:

\begin{thm}[Splitting principle]\label{SplittingPrinciple}
Suppose $X$ is a derived scheme over a Noetherian ring $A$ of finite Krull dimension, and suppose $E$ is a vector bundle of rank $r$ on $X$. If $E$ has a filtration with line bundles $\Ls_i$ as the associated graded pieces ($E$ \emph{has a splitting} by $\Ls_i$), then
$$c_i(E) = s_i(e(\Ls_1),...,e(\Ls_r)),$$
where $s_i$ is the $i^{th}$ elementary symmetric polynomial.
\end{thm}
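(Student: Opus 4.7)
The plan is to induct on the rank $r$. The base case $r=1$ is immediate from Theorem \fref{ChernProperties}(2), since $c_1(\Ls)=e(\Ls)$.

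For the inductive step ($r \geq 2$), let $Q:=E/\Ls_1$; the filtration of $E$ descends to one on $Q$ with associated graded $(\Ls_2,\ldots,\Ls_r)$, and the inductive hypothesis applied to $Q$ yields $c_j(Q) = s_j(e(\Ls_2),\ldots,e(\Ls_r))$. Using the identity $s_i(x_1,\ldots,x_r) = s_i(x_2,\ldots,x_r) + x_1 \cdot s_{i-1}(x_2,\ldots,x_r)$, the claim reduces to the ``partial Whitney formula''
$$c_i(E) = c_i(Q) + e(\Ls_1) \cdot c_{i-1}(Q). \qquad (\star)$$
The key reinterpretation of $(\star)$ is cohomological: writing $\pi := \pi_{X,Q}: \wtil X_Q \to X$ and denoting the natural filtration of $\pi^*Q$ by $\Mc'_1,\ldots,\Mc'_{r-1}$, the projection formula together with $\pi_!(\eta_{X,Q}) = 1_X$ yields
$$c_i(Q) + e(\Ls_1) \cdot c_{i-1}(Q) = \pi_!\bigl(s_i(e(\pi^*\Ls_1), e(\Mc'_1), \ldots, e(\Mc'_{r-1})) \bullet \eta_{X,Q}\bigr).$$
Thus $(\star)$ says that Chern classes of $E$ may equivalently be computed from the alternative resolution $\pi_{X,Q}$ equipped with the filtration $(\pi^*\Ls_1, \Mc'_1, \ldots, \Mc'_{r-1})$, which uses the given sub-line-bundle $\Ls_1 \hookrightarrow E$ in place of the blow-up at $V(0_E)$ used in the first step of Construction \fref{ChernClassCons}.

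To prove this independence of resolution, I would form the fibre product $Y := \wtil X_E \times_X \wtil X_Q$. Both resolutions pull back to $Y$, and the push-pull formula reduces matters to a comparison in $\PCob^*(Y)$ of the two filtrations of the pulled-back bundle. Multiplicativity of the Euler class (Lemma \fref{MultTopChern}) handles the top symmetric function, since both filtrations give $\prod_j e(\text{graded piece}) = e(\text{pullback of }E)$. For the remaining symmetric functions, one applies the formal group law (Theorem \fref{FGLTheorem}) together with the explicit expression for $\eta_{X,E}$ provided by Lemma \fref{BreakingApartUnity} and the identification of relevant blow-ups with linear projective embeddings via Proposition \fref{BlowUpOfLinearEmbedding}. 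The main obstacle lies exactly here: on $Y$ the symmetric polynomials of the two sets of Euler classes do not agree on the nose, so one must track carefully how their differences are absorbed by the $\eta$-classes upon integration, unraveling the recursion in Construction \fref{ChernClassCons} in the presence of the existing sub-line-bundle. This bookkeeping is the technical heart of the proof.
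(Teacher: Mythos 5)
Your reduction of the theorem to the ``partial Whitney formula'' $(\star)$ is correct, and the cohomological reinterpretation of $(\star)$ via the projection formula is also fine. But the proof has a genuine gap exactly where you flag ``the technical heart'': the comparison, over $Y := \wtil X_E \times_X \wtil X_Q$, of the symmetric polynomials in the Euler classes of the graded pieces of two \emph{different} filtrations of the same bundle. This is not bookkeeping that can be absorbed by unwinding the $\eta$-classes. The two filtrations (the one from Construction \fref{ChernClassCons}, whose first step is the sub-bundle $\Oc(\Ec) \hookrightarrow E$ produced by blowing up $V(0_E)$, and the one starting from the given $\Ls_1 \hookrightarrow E$) generally have non-isomorphic graded pieces, so there is no identity of symmetric functions holding before integration; the equality only emerges after pushing forward, and establishing it is essentially equivalent to the splitting principle itself. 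Neither the formal group law, nor Lemma \fref{BreakingApartUnity}, nor Proposition \fref{BlowUpOfLinearEmbedding} supplies the mechanism that forces the two integrals to agree, so as written the argument is circular in spirit: the unproven ``independence of resolution'' \emph{is} the theorem.

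The paper closes exactly this gap by a different route that you should compare with. From Theorem \fref{ProjectiveBundleChernClass} one has $\sum_{i=0}^{r}(-1)^i c_{r-i}(E)\bullet e(\Oc(1))^i = 0$ in $\PCob^*(\Pb(E^\vee))$, and since $E$ is filtered with graded pieces $\Ls_j$ the argument of Lemma \fref{TrivTopChernOfTwist} (multiplicativity of the Euler class plus the formal group law) gives the same vanishing with $c_{r-i}(E)$ replaced by $s_{r-i}(e(\Ls_1),\dots,e(\Ls_r))$. Subtracting the common top term $(-1)^r e(\Oc(1))^r$ and invoking the injectivity statement of Lemma \fref{PBFInj} identifies the remaining coefficients one by one. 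The point is that the universal relation in $\PCob^*(\Pb(E^\vee))$ is precisely the device that compares Chern classes against \emph{any} given splitting without ever matching graded pieces of two filtrations directly. If you want to rescue your inductive scheme, you would need to prove $(\star)$ by this same mechanism (apply the two vanishing relations on $\Pb(E^\vee)$ and on $\Pb(Q^\vee)$ and use Lemma \fref{PBFInj}), at which point the induction becomes redundant and you recover the paper's proof.
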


The above theorem will follow easily from Theorem \fref{ProjectiveBundleChernClass} after we prove the following lemma.

\begin{lem}\label{PBFInj}
Let $X$ be a quasi-projective derived scheme over a Noetherian ring $A$ of finite Krull dimension, and let $E$ be a vector bundle of rank $r$ on $X$. Then the morphism
$$\Ps roj: \bigoplus_{i=0}^{r-1} \PCob^{*-i}(X) \to \PCob^*(\Pb(E))$$
defined by
$$(\alpha_0, \alpha_1, ..., \alpha_{r-1}) \mapsto \sum_{i=0}^{r-1} \alpha_i \bullet e(\Oc(1))^i$$
is an injection.
\end{lem}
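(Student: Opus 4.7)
The plan is a two-step reduction: first, reduce to the case where $E$ is filtered by line bundles; second, handle the filtered case by induction on the rank $r$.

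For the reduction, Construction \ref{ChernClassCons} supplies a projective quasi-smooth morphism $\pi_{X,E}\colon \wtil X_E \to X$ of relative virtual dimension $0$ together with a class $\eta_{X,E} \in \PCob^0(\wtil X_E)$ satisfying $\pi_{X,E!}(\eta_{X,E}) = 1_X$. By Corollary \ref{InjPullback}, $\pi_{X,E}^*$ is injective, and on $\wtil X_E$ the pullback $\pi_{X,E}^* E$ carries a natural filtration by line bundles. Base-changing $\pi\colon \Pb(E)\to X$ along $\pi_{X,E}$ and using the naturality of $\Pb(-)$ together with the Euler class of $\Oc(1)$ produces a commutative square in which the vertical map on the source side is a direct sum of copies of $\pi_{X,E}^*$, hence injective. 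Thus it suffices to establish the injectivity of $\Ps roj$ for a filtered bundle on $\wtil X_E$.

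For the filtered case, I would induct on $r$. The base $r=1$ is trivial because $\Pb(\Ls)\simeq X$. For $r>1$ with filtration $0 = E_0 \subset \cdots \subset E_r = E$ and $E_k/E_{k-1}=\Ls_k$, I would pull back a hypothetical relation $\sum_{i=0}^{r-1}\pi^*(\alpha_i)\bullet \xi^i = 0$ along the linear embedding $j\colon \Pb(E_{r-1})\hookrightarrow \Pb(E)$ of Proposition \ref{LinearEmbeddingIsVanishingLocus}. By Theorem \ref{ProjectiveBundleChernClass} applied to $E_{r-1}$, the class $(j^*\xi)^{r-1}$ lies in the $\PCob^*(X)$-span of $1, j^*\xi, \ldots, (j^*\xi)^{r-2}$, so the pullback relation rewrites as an $(r-1)$-term relation on $\Pb(E_{r-1})$. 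The inductive hypothesis then forces $\alpha_0,\ldots,\alpha_{r-2}$ to be explicit $\PCob^*(X)$-multiples of $\alpha_{r-1}$.

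To isolate $\alpha_{r-1}$ itself, the plan is to complement the above with pushforward equations obtained from the iterated linear embeddings $j_k\colon \Pb(E_{r-k})\hookrightarrow \Pb(E)$. By Lemma \ref{MultTopChern} one has $\sigma_k := j_{k!}(1) = \prod_{\ell > r-k} c_1(\Ls_\ell \otimes \Oc(1))$, and by Theorem \ref{FGLTheorem} together with the nilpotence of line-bundle Chern classes, each factor $c_1(\Ls_\ell \otimes \Oc(1))$ equals $\xi \cdot u_\ell + c_1(\Ls_\ell)$ for some unit $u_\ell$. Hence $\sigma_k$ is a polynomial in $\xi$ with unit leading coefficient at $\xi^k$, and moreover $\pi \circ j_{r-1} = \mathrm{id}_X$ gives $\pi_!(\sigma_{r-1}) = 1$. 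Multiplying the original relation by suitable $\sigma_k$ and applying $\pi_!$ should yield equations independent of those produced by the pullback, and hence force $\alpha_{r-1}=0$.

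The hard part will be making this triangular argument rigorous. Since line-bundle Chern classes are nilpotent rather than algebraically independent, one cannot appeal to a Vandermonde determinant; one must genuinely exhibit an invertible $\PCob^*(X)$-linear change of basis between the powers of $\xi$ and the classes $\sigma_k$ (or a variant), and verify that substitution using the Chern class relation of Theorem \ref{ProjectiveBundleChernClass} does not cause the decisive equation to collapse into a tautology. As a backup strategy, one could exploit the injective Gysin pushforward $i_!\colon \PCob^*(\Pb(E))\to \PCob^*(\Pb(E\oplus \Oc))$ of Proposition \ref{SummandGivesInjectivePush} to reduce to a situation where a trivial summand is available and the classes $\sigma_k$ behave more transparently.
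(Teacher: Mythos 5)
Your opening reduction---pulling back along $\pi_{X,E}\colon \wtil X_E\to X$, which is injective on $\PCob^*$ by Corollary \fref{InjPullback}, so that $E$ may be assumed filtered by line bundles---is exactly the paper's. After that the routes diverge. The paper picks a surjection $\Oc_X^{\oplus n+1}\to(\Ls\otimes E)^\vee$, hence a linear embedding $i\colon\Pb(E)\hookrightarrow\Pb^n_X$ with $i^*\Ls(1)\simeq\Oc(1)$, and works inside the \emph{known} free module $\PCob^*(\Pb^n_X)\cong\PCob^*(X)[t]/(t^{n+1})$ of \cite{AY}: by Proposition \fref{LinearEmbeddingIsVanishingLocus} and Lemma \fref{MultTopChern}, $i_!(e(\Oc(1))^l)=e(\Ls(1))^l\bullet e(F(1))$ expands as a polynomial in $t$ whose coefficients are a unit on a shifted diagonal and nilpotent elsewhere, so $i_!\circ\Ps roj$ is injective and hence so is $\Ps roj$. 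Your inward induction along $\Pb(E_{r-1})\hookrightarrow\Pb(E)$ is a genuinely different (and potentially more self-contained) strategy, but as written it is not a proof: the step that actually forces $\alpha_{r-1}=0$ is only sketched as something that ``should'' work, and you flag it yourself as the hard part. That is the gap.

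The gap is closable, and more cheaply than the invertible change of basis you anticipate. After your pullback step the original relation collapses to $\alpha_{r-1}\bullet P(\xi)=0$ on $\Pb(E)$, where $P(\xi)=\xi^{r-1}-\sum_{i<r-1}w_i\,\xi^i$ and the $w_i$ are \emph{nilpotent} elements of $\PCob^*(X)$. Now a single application of $\pi_!$ suffices: since $\sigma_{r-1}=j_{r-1!}(1)=\prod_{\ell\ge 2}F\bigl(e(\Ls_\ell),\xi\bigr)$ equals $\xi^{r-1}$ plus terms each carrying a nilpotent coefficient from $\PCob^*(X)$, and $\pi_!(\sigma_{r-1})=1$ because $\pi\circ j_{r-1}=\mathrm{id}_X$, the projection formula shows that $\pi_!(\xi^{r-1})$ is a unit of $\PCob^*(X)$; hence $\pi_!(P(\xi))$ is a unit and $\alpha_{r-1}=\alpha_{r-1}\bullet\pi_!(P(\xi))\bullet\pi_!(P(\xi))^{-1}=0$. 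Two cautions in executing this. First, do \emph{not} obtain the relation on $\Pb(E_{r-1})$ from Theorem \fref{ProjectiveBundleChernClass}: its coefficients are the Chern classes $c_j(E_{r-1}^\vee)$, whose nilpotence (Theorem \fref{NilpChernClass}) is deduced downstream from the splitting principle, which rests on this very lemma---a circularity. In the split case use instead $0=e(E_{r-1}(1))=\prod_{\ell\le r-1}F\bigl(e(\Ls_\ell),\xi'\bigr)$ directly (Lemma \fref{MultTopChern} and Theorem \fref{FGLTheorem}), whose non-leading coefficients are manifestly nilpotent. Second, when you normalize that relation to express $\xi'^{r-1}$ in terms of lower powers with coefficients in $\PCob^*(X)$, note that the formal group law also produces powers $\xi'^m$ with $m>r-1$ and units depending on $\xi'$; these must be eliminated by a short division-with-remainder argument, which terminates precisely because all the offending coefficients are nilpotent.
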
 

\begin{rem}
Of course, later we will show that the above map is surjective as well, giving rise to the projective bundle formula (Theorem \fref{PBF}). Unfortunately, the injectivity part seems to be necessary for the proof of splitting principle, and that in turn seems to be necessary to conclude that all Chern classes are nilpotent. Nilpotence of Chern classes is used in the proof Theorem \fref{PBF}.
\end{rem}

\begin{proof}
Note that we can assume that $E$ has a splitting by some line bundles $\Ls_1,...,\Ls_r$ on $X$: by Corollary \fref{InjPullback} the pullback morphism $\PCob^*(X) \to \PCob(\wtil X_E)$ is an injection (see Construction \fref{ChernClassCons}) and moreover the pullback of $E$ splits on $\wtil X_E$.

Since $X$ is quasi-projective we can find a short exact sequence
$$0 \to \Ls \otimes E \to \Oc_X^{\oplus n+1} \to F \to 0$$ 
of vector bundles on $X$ inducing a linear embedding $i: \Pb(E) \hookrightarrow \Pb_X^n$ with the property that 
\begin{equation}\label{PullbackOfAntitautulogical}
i^* \Ls(1) \simeq \Oc(1).
\end{equation}
Of course, the advantage of this embedding is that the structure of $\PCob^*(\Pb^n_X)$ is understood: by the results of \cite{AY} $\PCob^*(\Pb^n_X) \cong \PCob^*(X)[t]/(t^{n+1})$, where $t = e(\Oc(1))$.

By Proposition \fref{LinearEmbeddingIsVanishingLocus}, $\Pb(E)$ is the derived vanishing locus of a section of $F(1)$. Without loss of generality we may assume that $F$ has a splitting by line bundles $\Ms_1, ..., \Ms_s$ on $X$ (note that $r+s=n$) so that 
\begin{align*}
i_!(1_{\Pb(E)}) &= e(F(1)) \\
&= F\bigl(e(\Ms_1), e(\Oc(1)) \bigr) \bullet \cdots \bullet F\bigl(e(\Ms_r), e(\Oc(1)) \bigr) \quad (\text{Proposition \fref{MultTopChern}}), 
\end{align*}
where $F$ is the formal group law of $\PCob^*$. Expanding the right hand side of the equation, we see that
\begin{align}\label{PresentationOfUnit}
i_!(1_{\Pb(E)}) &= \sum_{j=0}^n \beta_j \bullet e(\Oc(1))^j
\end{align}
where $\beta_j \in \PCob^*(X)$ is nilpotent for $j \not = n-s = r$ and a unit for $j = n-s = r$. Using this, we can conclude that for $l \leq r$
\begin{align*}
i_!\bigl( e(\Oc(1))^l \bigr) &=e(\Ls(1)) \bullet i_!(1_{\Pb(E)}) \quad (\text{projection formula and (\fref{PullbackOfAntitautulogical})}) \\
&= F \bigl(e(\Ls), e(\Oc(1)) \bigr) \bullet e(F(1)) \\
&= \sum_{j=0}^n \beta_{l,j} \bullet e(\Oc(1))^j \quad (\text{\fref{PresentationOfUnit}})
\end{align*}
where $\beta_{l,j} \in \PCob^*(X)$ is nilpotent for $j \not = r - l$ and a unit for $j = r - l$.

To put the above in other words, the images of the basis vectors in 
$$i_! \circ \Ps roj: \bigoplus_{i=0}^{r-1} \PCob^{*-i}(X) \to \PCob^*(\Pb^n_X)$$ 
are linearly independent over $\PCob^*(X)$ and therefore $i_! \circ \Ps roj$ is injective. But this implies the injectivity of $\Ps roj$, proving the claim.
\end{proof}

We can now prove the validity of splitting principle:

\begin{proof}[Proof of Theorem \fref{SplittingPrinciple}]
By Theorem \fref{ProjectiveBundleChernClass}, we have an equality
\begin{equation*}
\sum_{i=0}^r (-1)^i c_{r-i}(E) \bullet e(\Oc(1))^i = 0 \in \PCob^*(\Pb(E^\vee)).
\end{equation*}
On the other hand, since $E$ has a splitting by line bundles $\Ls_i$, we can argue as in the proof of Lemma \fref{TrivTopChernOfTwist} that
\begin{equation*}
\sum_{i=0}^r (-1)^i s_{r-i}\bigl(e(\Ls_1), ..., e(\Ls_r)\bigr) \bullet e(\Oc(1))^i = 0 \in \PCob^*(\Pb(E^\vee)).
\end{equation*}
The left hand sides of both formulas both share the term $(-1)^r e(\Oc(1))^r$, so we can conclude that
\begin{equation*}
\sum_{i=0}^{r-1} (-1)^i c_{r-i}(E) \bullet e(\Oc(1))^i = \sum_{i=0}^{r-1} (-1)^i s_{r-i}\bigl(e(\Ls_1), ..., e(\Ls_r)\bigr) \bullet e(\Oc(1))^i.
\end{equation*}
It then follows from Lemma \fref{PBFInj} that
$$c_i(E) = s_i(e(\Ls_1),...,e(\Ls_r)),$$
which is exactly what we wanted.
\end{proof}

As an immediate application we prove the following properties of Chern classes, which will be useful later.

\begin{thm}[Nilpotence of Chern classes]\label{NilpChernClass}
Let $X$ be a quasi-projective derived scheme over a Noetherian ring $A$ of finite Krull dimension and suppose $E$ is a vector bundle on $X$. Then the Chern classes $c_i(E) \in \PCob^*(X)$ are nilpotent.
\end{thm}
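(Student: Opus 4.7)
The plan is to reduce immediately to the split case using the morphism $\pi_{X,E}: \wtil X_E \to X$ of Construction \ref{ChernClassCons}, where by design the pullback of $E$ admits a filtration by line bundles. Concretely, I would first invoke Corollary \ref{InjPullback}, noting that $\pi_{X,E}$ is projective and quasi-smooth with $\pi_{X,E !}(\eta_{X,E}) = 1_X$, so the pullback $\pi_{X,E}^*: \PCob^*(X) \to \PCob^*(\wtil X_E)$ is injective. Hence it suffices to prove that $\pi_{X,E}^* c_i(E)$ is nilpotent in $\PCob^*(\wtil X_E)$.

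By naturality of Chern classes (Theorem \ref{ChernProperties}) we have
$$\pi_{X,E}^* c_i(E) = c_i(\pi_{X,E}^* E),$$
and by construction $\pi_{X,E}^*E$ has a filtration with line bundles $\Ls_1, \ldots, \Ls_r$ as associated graded pieces. The splitting principle (Theorem \ref{SplittingPrinciple}) then gives
$$c_i(\pi_{X,E}^* E) = s_i\bigl(e(\Ls_1), \ldots, e(\Ls_r)\bigr),$$
where $s_i$ is the $i$-th elementary symmetric polynomial, a polynomial with no constant term.

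Finally, each Euler class $e(\Ls_j)$ of a line bundle is nilpotent by the case $r=1$ of Lemma \ref{NilpTopChern} (i.e.\ Lemma 6.2 of \cite{AY}). Since the nilpotent elements of the commutative ring $\PCob^*(\wtil X_E)$ form an ideal, any polynomial with no constant term evaluated on nilpotent elements is nilpotent. Therefore $\pi_{X,E}^* c_i(E)$ is nilpotent, and by the injectivity established in the first paragraph so is $c_i(E)$. There is essentially no obstacle: all the machinery --- the injective pullback from Construction \ref{ChernClassCons}, the splitting principle, and the nilpotence of line bundle Euler classes --- has already been put in place, and the proof is a three-line concatenation of these facts.
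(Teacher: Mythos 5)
Your proposal is correct and is essentially identical to the paper's own argument: reduce to the split case via the injective pullback $\pi_{X,E}^*$ (Corollary \fref{InjPullback}), then apply the splitting principle and the nilpotence of Euler classes of line bundles. The only difference is that you spell out the routine ring-theoretic step (elementary symmetric polynomials of nilpotents are nilpotent) that the paper leaves implicit.
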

\begin{proof}
Since Euler classes are nilpotent, the theorem follows trivially from Theorem \fref{SplittingPrinciple} whenever $E$ splits. On the other hand, the pullback morphism
$$\pi_{X,E}^*: \PCob^*(X) \to \PCob^*(\wtil X_E)$$
(see Construction \fref{ChernClassCons}) is injective by Corollary \fref{InjPullback}, and as $E$ splits on $\wtil X_E$, the claim follows.
\end{proof}

\begin{thm}[Whitney sum formula]\label{WSF}
Let $X$ be a quasi-projective derived scheme over a Noetherian ring $A$ of finite Krull dimension and let 
$$0 \to E' \to E \to E'' \to 0$$
be a short exact sequence of vector bundles on $X$. Then 
$$c(E) = c(E') \bullet c(E'') \in \PCob^*(X).$$
\end{thm}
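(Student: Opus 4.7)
The strategy is to reduce to the case where both $E'$ and $E''$ (and hence $E$) admit splittings by line bundles, where the formula becomes a purely combinatorial identity on elementary symmetric polynomials.

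First I would construct an auxiliary map $\pi\colon Y \to X$ that is projective, quasi-smooth of relative virtual dimension $0$, admits a class pushing forward to $1_X$, and on which all three bundles $E', E, E''$ split compatibly by line bundles. Take the two-step tower
$$Y := \wtil{\bigl(\wtil X_{E'}\bigr)}_{\,q^*E''} \xrightarrow{\; p \;} \wtil X_{E'} \xrightarrow{\; q \;} X,$$
where the first stage splits $E'$ by Construction \fref{ChernClassCons} and the second stage splits the pullback of $E''$. On $Y$ the pulled-back sequence $0 \to E' \to E \to E'' \to 0$ provides the middle piece of a filtration of $E$ whose associated graded is the concatenation of the line bundles splitting $E'$ and those splitting $E''$. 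By Corollary \fref{InjPullback} together with Construction \fref{ChernClassCons} (applied twice), the pullback $\pi^*\colon \PCob^*(X) \to \PCob^*(Y)$ is injective, so it suffices to verify $\pi^* c(E) = \pi^* c(E') \bullet \pi^* c(E'')$ on $Y$.

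On $Y$, naturality of Chern classes (Theorem \fref{ChernProperties}) combined with the splitting principle (Theorem \fref{SplittingPrinciple}) expresses both sides as polynomials in the Euler classes of the splitting line bundles. Let $x_1,\ldots,x_{r'}$ and $y_1,\ldots,y_{r''}$ denote the Euler classes coming from the splittings of $\pi^*E'$ and $\pi^*E''$ respectively. Then the splitting principle gives
$$\pi^* c(E') = \prod_{i=1}^{r'}(1+x_i), \qquad \pi^* c(E'') = \prod_{j=1}^{r''}(1+y_j),$$
while the combined filtration of $\pi^*E$ with graded pieces $x_1,\ldots,x_{r'},y_1,\ldots,y_{r''}$ together with Theorem \fref{SplittingPrinciple} yields
$$\pi^* c(E) = \prod_{i=1}^{r'}(1+x_i) \cdot \prod_{j=1}^{r''}(1+y_j).$$
The equality $\pi^* c(E) = \pi^* c(E') \bullet \pi^* c(E'')$ is then immediate from commutativity of the cup product.

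The only delicate point is the reduction step: one must check that pulling back via $q$ and then via $p$ really does yield a filtration of $\pi^*E$ by line bundles whose graded pieces are the concatenation of those of $\pi^*E'$ and those of $\pi^*E''$. This is straightforward because a short exact sequence $0 \to E' \to E \to E'' \to 0$ combined with filtrations on $E'$ and on $E''$ always assembles into a filtration of $E$ by taking the preimages of the filtration on $E''$ and splicing in the filtration on $E'$; so this is not a serious obstacle, just a bookkeeping check. After this, injectivity of $\pi^*$ (guaranteed by the existence of the distinguished classes $\eta$ on each stage of the tower and Corollary \fref{InjPullback}) transports the identity back to $X$, completing the proof.
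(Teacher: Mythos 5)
Your proposal is correct and follows essentially the same route as the paper: the paper's proof also reduces to the split case via a map $\pi$ with $\pi^*$ injective and then invokes the splitting principle, merely remarking that ``there are multiple ways'' to find such a $\pi$ without specifying one. Your two-stage tower $\wtil{(\wtil X_{E'})}_{q^*E''} \to \wtil X_{E'} \to X$, with injectivity supplied by Corollary \fref{InjPullback} at each stage and the spliced filtration on $\pi^*E$, is a valid explicit instance of exactly that construction.
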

\begin{proof}
Again, the theorem follows trivially from Theorem \fref{SplittingPrinciple} whenever both $E'$ and $E''$ split. There are multiple ways to find a map $\pi: \wtil X \to X$ so that $\pi^*$ is injective and the desired splitting occurs on $\wtil X$, so the claim follows.
\end{proof}

\section{Applications}\label{ApplicationsSect}

The purpose of this section is to use the theory of Chern classes to study the relationship between the universal precobordism theory $\PCob^*(X)$, $K$-theory and intersection theory, as well as strengthen the weak projective bundle theorem from \cite{AY}. In Section \fref{CFSubSect} we will show that the algebraic $K$-theory ring $K^0(X)$ can be recovered in a simple way from the universal precobordism ring $\PCob^*(X)$. In Section \fref{RRSubSect}, we will study a candidate theory for the Chow-cohomology of $X$ (the \emph{universal additive precobordism}, see Definition \fref{AdditivePCob} below), and show that the classical Grothendieck-Riemann-Roch theorem generalizes to this setting.

Before beginning, we need some preliminary definitions. Recall (from Section \fref{PrecobordismBackground}) that for a fixed Noetherian ground ring $A$ of finite Krull dimension \emph{precobordism theories} are defined as certain quotients of rings $\Mc^*_+$ of cobordism cycles. For the following constructions, it is convenient to think about the second set of relations defining $\PCob^*$.

\begin{defn}\label{AdditivePCob}
The \emph{universal additive precobordism} $\PCob_a^*$ the quotient theory obtained from $\Mc_+^*$ by enforcing the homotopy fibre relation, as well as the relation 
$$e(\Ls \otimes \Ls') = e(\Ls) + e(\Ls')$$ 
on Euler classes of line bundles.
\end{defn}

\begin{defn}\label{MultiplicativePCob}
The \emph{universal multiplicative precobordism} $\PCob^0_m$ the quotient theory obtained from $\Mc_+^*$ by enforcing the homotopy fibre relation, as well as the relation 
$$e(\Ls \otimes \Ls') = e(\Ls) + e(\Ls') - e(\Ls) \bullet e(\Ls')$$ 
on Euler classes of line bundles. Note that the latter relation does not respect the grading of $\Mc_+^*$, and therefore we do not get a natural grading on $\PCob^0_m$.
\end{defn}

\begin{prop}\label{TensoringToGetUnivAdditiveAndMultProp}
Consider the precobordism rings $\PCob^*(X \to Y)$ as $\Lb$-algebras via the morphism $\Lb \to \PCob^*(pt)$ classifying the formal group law. Let $\Lb \to \Zb_a$ and $\Lb \to \Zb_m$ be the ring homomorphisms classifying respectively the additive and the multiplicative formal group laws on the integers. Then we have natural equivalences
$$\Zb_a \otimes_\Lb \PCob^* \cong \PCob^*_a$$
and
$$\Zb_m \otimes_\Lb \PCob^* \cong \PCob^0_m.$$
\end{prop}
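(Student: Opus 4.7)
The plan is to realize $\PCob^*_a$ and $\PCob^0_m$ as oriented cohomology theories in the sense of Definition \fref{OrientedCohomDef}, apply the universal property of $\PCob^*$ from Theorem \fref{UniversalPropOfPreCob} to obtain maps in one direction, and construct inverse maps using the common presentation of all the theories as quotients of $\Mc_+^*$.

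Checking the axioms of Definition \fref{OrientedCohomDef} for $\PCob^*_a$ and $\PCob^0_m$ (the latter concentrated in degree $0$, as with $K^0$) is mostly formal: additivity, Gysin pushforwards, functoriality, push-pull and normalization hold already on $\Mc_+^*$ and pass to any quotient, while the $(FGL)$ axiom holds by definition with the appropriate additive or multiplicative coefficients. The one point requiring care is the nilpotence of Euler classes of line bundles: for a globally generated $\Ls$ on $X$ generated by $n$ sections, the associated morphism $X \to \Pb^{n-1}$ pulls $\Oc(1)$ back to $\Ls$, and $e(\Oc(1))^n = 0$ on $\Pb^{n-1}$ because the derived intersection of the $n$ coordinate hyperplanes is empty; for a general line bundle one uses quasi-projectivity to find $\Ls'$ making both $\Ls'$ and $\Ls \otimes \Ls'$ very ample and solves the additive or multiplicative formula for $e(\Ls)$ in terms of the now-nilpotent $e(\Ls')$ and $e(\Ls \otimes \Ls')$, noting that sums, differences and products of nilpotents are nilpotent (and that $1 - e(\Ls')$ is invertible in the multiplicative case). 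Granted the axioms, Theorem \fref{UniversalPropOfPreCob} supplies unique natural transformations $\eta_a\colon \PCob^* \to \PCob^*_a$ and $\eta_m\colon \PCob^* \to \PCob^0_m$. The composition $\Lb \to \PCob^*(pt) \to \PCob^*_a(pt)$ classifies the additive formal group law, so by the universal property of the Lazard ring it factors through $\Lb \to \Zb_a$; the multiplicative case is analogous. This yields maps
$$\bar\eta_a\colon \Zb_a \otimes_\Lb \PCob^* \to \PCob^*_a, \qquad \bar\eta_m\colon \Zb_m \otimes_\Lb \PCob^* \to \PCob^0_m.$$

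For the inverse maps, each tensor product receives a natural surjection from $\Mc_+^*$ via $\PCob^*$ and inherits the homotopy fibre relations from $\PCob^*$. By Theorem \fref{FGLTheorem} the identity
$$e(\Ls_1 \otimes \Ls_2) = \sum_{i,j} a_{ij} e(\Ls_1)^i \bullet e(\Ls_2)^j$$
already holds in $\PCob^*$, so after tensoring with $\Zb_a$ (resp.\ $\Zb_m$) over $\Lb$ the $a_{ij}$ specialize to the additive (resp.\ multiplicative) coefficients, and the identity becomes the defining relation of $\PCob^*_a$ (resp.\ $\PCob^0_m$). Both defining relations of $\PCob^*_a$ (resp.\ $\PCob^0_m$) therefore hold in the tensor product, producing maps $\iota_a\colon \PCob^*_a \to \Zb_a \otimes_\Lb \PCob^*$ and $\iota_m\colon \PCob^0_m \to \Zb_m \otimes_\Lb \PCob^*$ compatible with the canonical maps from $\Mc_+^*$. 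The pairs $(\bar\eta_a,\iota_a)$ and $(\bar\eta_m,\iota_m)$ are then mutually inverse because every map in sight agrees with the identity on the generating image of $\Mc_+^*$. The main obstacle is the verification of the oriented-cohomology-theory axioms for the two quotient theories, most notably the nilpotence of Euler classes without access to the geometric FGL relation of $\PCob^*$; the remainder is essentially formal.
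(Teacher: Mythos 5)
Your proposal is correct and follows essentially the same route as the paper: one map comes from the defining relations of $\PCob^*_a$ (resp. $\PCob^0_m$) holding in the tensor product via Theorem \fref{FGLTheorem}, the other from verifying the oriented cohomology theory axioms and invoking Theorem \fref{UniversalPropOfPreCob}, and the two are inverse because both are the identity on cycle classes. Your treatment of the nilpotence of Euler classes (globally generated case first, then solving the additive or multiplicative relation for a general line bundle written as a twist of very ample ones) is exactly the paper's argument, just spelled out in slightly more detail for the multiplicative case.
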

\begin{proof}
Proofs of the claims are essentially the same, so let us prove the first one. We first note that by construction
$$e(\Ls_1 \otimes \Ls_2) = e(\Ls_1) + e(\Ls_2) \in \Zb_a \otimes_\Lb \PCob_a^* (X)$$
and therefore we obtain a well defined morphism $\psi: \PCob^*_a(X) \to \Zb_a \otimes_\Lb \PCob^*(X)$ given by identity on the level of cycles. To finish the claim, we need the inverse to be well defined, and by Theorem \fref{UniversalPropOfPreCob} this is essentially equivalent to showing that $\PCob^*_a(X)$ is a oriented cohomology theory in the sense of Definition \fref{OrientedCohomDef}. The nontrivial axioms to check are $(Norm)$ and $(FGL)$.
\begin{enumerate}
\item[$(Norm)$] Applying $(PP)$ to the definition of Euler class, we see that $e(\Ls) = i_{0!} (1_{D_0})$, where $i_0: D_0 \hookrightarrow X$ is the inclusion of the derived vanishing locus of the zero section of $\Ls$. But it follows from the homotopy fibre relation that $i_{0!}(1_{D_0}) = i_!(1_D)$ for any inclusion $i: D \hookrightarrow X$ of a virtual Cartier divisor $D$ in the linear system of $\Ls$. This proves the claim.

\item[$(FGL)$] By above, $e(\Ls)$ is nilpotent for $\Ls$ globally generated. A general line bundle $\Ls$ can be expressed as $\Ls_1 \otimes \Ls_2^\vee$ with $\Ls_i$ globally generated, and therefore $e(\Ls) = e(\Ls_1) - e(\Ls_2)$ is nilpotent for general $\Ls$. Moreover, the Euler classes satisfy a formal group law by construction, so we are done.
\end{enumerate}
Since $\PCob_a(X)$ is an oriented cohomology theory, and since it satisfies the additive formal group law, we obtain a well defined morphism $\eta: \Zb_a \otimes_\Lb \PCob^*(X) \to \PCob^*_a(X)$ commuting with pullbacks, pushforwards and preserving the identity elements. Hence, on the level of cycles, $\eta$ is the identity, and therefore $\eta$ and $\psi$ are inverses of each other.
\end{proof}

\subsection{Conner-Floyd theorem}\label{CFSubSect}

The purpose of this section is to prove the following theorem:

\begin{thm}\label{GeneralCF}
Suppose we are working over a Noetherian base ring $A$ of finite Krull dimension. Then the natural map
$$\eta_K: \Zb_m \otimes_\Lb \PCob^*(X) \cong \PCob^0_m (X) \to K^0(X)$$
sending $[V \xrightarrow{f} X]$ to $[f_* \Oc_V]$ is a natural isomorphism of rings commuting with pullbacks and Gysin pushforwards.
\end{thm}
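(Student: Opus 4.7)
The plan is to construct an explicit inverse $\zeta$ to $\eta_K$ and to invoke the universal property of $\PCob^*$ twice.

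First, I would verify that $K^0$ is an oriented cohomology theory on $d \Sc ch_A$ in the sense of Definition \fref{OrientedCohomDef}, with Gysin pushforward given by derived pushforward of perfect complexes and Euler class $e(\Ls) = 1 - [\Ls^\vee]$; the multiplicative formal group law
$$e(\Ls_1) + e(\Ls_2) - e(\Ls_1) \bullet e(\Ls_2) = 1 - [\Ls_1^\vee][\Ls_2^\vee] = e(\Ls_1 \otimes \Ls_2)$$
is then immediate. Theorem \fref{UniversalPropOfPreCob} together with Proposition \fref{TensoringToGetUnivAdditiveAndMultProp} produces the unique natural morphism $\eta_K \colon \PCob^0_m \to K^0$ of oriented theories, and the formula $\eta_K([V \xrightarrow{f} X]) = [f_* \Oc_V]$ follows because $\eta_K$ commutes with Gysin pushforwards.

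Next, I would define the inverse $\zeta \colon K^0(X) \to \PCob^0_m(X)$ on a vector bundle $E$ of rank $r$ by
$$\zeta([E]) := r - c_1(E^\vee) \in \PCob^0_m(X),$$
with $c_1$ the Chern class of Construction \fref{ChernClassCons}, and extend to perfect complexes by alternating sums. The Whitney sum formula (Theorem \fref{WSF}) gives additivity on short exact sequences, and the splitting principle (Theorem \fref{SplittingPrinciple}) combined with the multiplicative formal group law yields multiplicativity under tensor products, so $\zeta$ is a well-defined ring homomorphism. The composition $\eta_K \circ \zeta$ is the identity of $K^0$: the splitting principle inside $K^0$ gives $c_1^{K^0}(E^\vee) = r - [E]$, whence $\eta_K(\zeta([E])) = r - c_1^{K^0}(E^\vee) = [E]$; in particular $\eta_K$ is surjective.

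The main obstacle will be to show $\zeta \circ \eta_K = \mathrm{Id}_{\PCob^0_m}$. My plan is to apply Theorem \fref{UniversalPropOfPreCob} a second time, now with target $\PCob^0_m$ itself regarded as an oriented cohomology theory: both the canonical quotient $\pi \colon \PCob^* \to \PCob^0_m$ and the composition $\zeta \circ \eta_K \circ \pi$ are natural ring homomorphisms that preserve pullbacks and Euler classes of line bundles (the latter by the direct computation $\zeta(\eta_K(e(\Ls))) = \zeta(1-[\Ls^\vee]) = 1 - (1-c_1(\Ls)) = e(\Ls)$). By the uniqueness clause of Theorem \fref{UniversalPropOfPreCob}, the two maps coincide once both are shown to commute with Gysin pushforwards, after which surjectivity of $\pi$ yields the desired identity. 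The genuine obstacle is therefore to verify that $\zeta$ itself commutes with Gysin pushforwards, which is essentially a Grothendieck--Riemann--Roch identity inside $\PCob^0_m$. I would attempt this by factoring every projective quasi-smooth morphism through some trivial bundle $\Pb^N_X$, reducing to the two cases of the structure morphism $\Pb^N_X \to X$ (where the explicit description $\PCob^*(\Pb^N_X) \cong \PCob^*(X)[t]/\langle t^{N+1} \rangle$ of \cite{AY}, together with the identities $[\Pb^n] = 1 \in \PCob^0_m(pt)$ forced by the multiplicative formal group law, makes the computation tractable) and of a quasi-smooth closed embedding (where both sides may be expressed in terms of Chern classes of the conormal bundle using the normalization axiom, Lemma \fref{MultTopChern}, and the splitting principle).
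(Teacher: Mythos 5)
Your proposal follows essentially the same route as the paper: your $\zeta$ is exactly the paper's inverse $ch_m[E] = \mathrm{rank}(E) - c_1(E^\vee)$, ring-homomorphy is obtained the same way from the Whitney sum formula and splitting principle, and both arguments reduce the identity $\zeta \circ \eta_K = \mathrm{Id}$ to showing that $\zeta$ commutes with Gysin pushforwards, handled by factoring a projective quasi-smooth morphism through $\Pb^N_X$ and treating the projection and the closed embedding separately (the paper outsources this step to Lemmas 4.1--4.4 of \cite{An}). Your final appeal to the uniqueness clause of Theorem \fref{UniversalPropOfPreCob} is just a repackaging of the paper's direct evaluation on the generators $f_!(1_V)$, so there is no substantive difference.
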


Note that even though the most interesting part of the above theorem is that it generalizes Theorem 4.6 of \cite{An} to work over a more general base ring, it actually generalizes (slightly) also the theorem in characteristic 0 since we have potentially fewer relations. Moreover, it is self contained in that it does not any previous results of \cite{LM} or \cite{LS}. As usual, the hardest part of the theorem is the construction of Chern classes, which is already taken care of. As the proof is essentially the same as in \cite{An}, we will merely outline the  steps here, and refer to \emph{loc. cit.} for the details.

The idea is to show that the morphism $ch_m: K^0(X) \to \PCob^0_m(X)$, which is defined by the formula
\begin{equation}\label{CFCharacter}
ch_m[E] = \mathrm{rank}(E) - c_1(E^\vee),
\end{equation}
gives an inverse to $\eta_K$. This morphism is a homomorphism of rings: the additivity follows from Theorem \fref{WSF} and multiplicativity can be shown as in Section 4.1 of \cite{An}. Since $\eta_K$ preserves Chern classes, it is easy to check that $\eta_K \circ ch_m$ is the identity. Our next task is to show that $ch_m$ commutes with Gysin pushforwards along projective quasi-smooth morphisms, which is the hardest part of showing that also $ch_m \circ \eta_K$ is the identity. We are going to need several lemmas for this.

\begin{lem}
The map $ch_m: K^0 \to \PCob^0_m$ preserves Chern classes.
\end{lem}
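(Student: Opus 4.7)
The plan is to reduce the statement to the case of line bundles via the splitting principle, exploiting the fact that $ch_m$ is a ring homomorphism.

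First I would compute what $ch_m$ does to Euler classes of line bundles. In $K^0$, the Euler class of a line bundle $\Ls$ is $e^{K^0}(\Ls) = 1 - [\Ls^\vee]$ (this is $i_!(1)$ for $i: V(s) \hookrightarrow X$ the vanishing locus of any section, via the Koszul resolution $[\Ls^\vee \to \Oc] \simeq i_*\Oc_{V(s)}$ in $K^0$). Applying the formula (\ref{CFCharacter}) and using that $ch_m$ is a ring map gives
$$ch_m(e^{K^0}(\Ls)) = ch_m(1) - ch_m([\Ls^\vee]) = 1 - (1 - c_1(\Ls)) = c_1(\Ls) = e^{\PCob_m^0}(\Ls).$$
So the identity $ch_m(c_i^{K^0}(E)) = c_i^{\PCob_m^0}(E)$ holds whenever $E$ is a line bundle (take $i = 1$ and use the normalization Theorem \ref{ChernProperties}).

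Next, given an arbitrary vector bundle $E$ of rank $r$ on $X$, I would apply Construction \ref{ChernClassCons} to obtain a projective quasi-smooth morphism $\pi_{X,E}: \wtil X_E \to X$ of relative virtual dimension $0$ along which $\pi_{X,E}^*E$ acquires a splitting by line bundles $\Ls_1, \ldots, \Ls_r$, together with a class $\eta_{X,E}$ satisfying $\pi_{X,E!}(\eta_{X,E}) = 1_X$. By Corollary \ref{InjPullback}, the pullback $\pi_{X,E}^*$ is injective on $\PCob_m^0$. Since $ch_m$ and both Chern class constructions are natural in pullbacks, it suffices to check the identity after pulling back to $\wtil X_E$, where $E$ splits.

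In the split case, the splitting principle (Theorem \ref{SplittingPrinciple}) in $\PCob_m^0$ gives
$$c_i^{\PCob_m^0}(\pi_{X,E}^*E) = s_i\bigl(e^{\PCob_m^0}(\Ls_1), \ldots, e^{\PCob_m^0}(\Ls_r)\bigr).$$
The analogous identity in $K^0$ holds either by observing that the arguments of Section \ref{ChernClassConsSect} depend only on the oriented cohomology theory structure (which $K^0$ satisfies), or equivalently by defining $c_i^{K^0}$ as the image of $c_i^{\PCob_m^0}$ under the universal map $\PCob_m^0 \to K^0$. Either way, on $\wtil X_E$ we obtain
$$c_i^{K^0}(\pi_{X,E}^*E) = s_i\bigl(e^{K^0}(\Ls_1), \ldots, e^{K^0}(\Ls_r)\bigr).$$
Applying the ring homomorphism $ch_m$ and invoking the line-bundle computation from the first paragraph finishes the proof.

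The main technical obstacle is making sure that the splitting principle and Construction \ref{ChernClassCons} are available inside $K^0$. This is a minor point: all the geometric ingredients (derived blow-ups, derived vanishing loci, Euler classes, Gysin pushforwards, the projection formula) exist in any oriented cohomology theory, and the specific proof of Lemma \ref{PBFInj} transfers without change — so the splitting principle is a purely formal consequence of the oriented cohomology structure and holds for $K^0$ as well. Once this point is acknowledged, the argument collapses to the line-bundle calculation and the injectivity of $\pi_{X,E}^*$.
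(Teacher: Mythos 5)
Your proof is correct, and it supplies an actual argument where the paper simply defers to Lemma~4.1 of \cite{An} (with $\beta=1$); the route you take --- compute $ch_m$ on Euler classes of line bundles, then reduce to the split case via $\pi_{X,E}^*$ and the splitting principle --- is the standard one and almost certainly what the cited lemma does. The line-bundle computation $ch_m(1-[\Ls^\vee]) = c_1(\Ls)$ is right, the injectivity of $\pi_{X,E}^*$ on $\PCob^0_m$ does follow from Corollary \fref{InjPullback} (its proof only uses the projection formula, available in $\PCob^0_m$), and the final symmetric-function manipulation is fine since $ch_m$ is a ring homomorphism, which the paper establishes before this lemma.

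One small caveat: your claim that the splitting principle in $K^0$ is ``a purely formal consequence of the oriented cohomology structure'' overstates the case, because the proof of Lemma \fref{PBFInj} invokes the computation $\PCob^*(\Pb^n_X)\cong \PCob^*(X)[t]/(t^{n+1})$ from \cite{AY}, which is specific to $\PCob^*$ and not an axiom of Definition \fref{OrientedCohomDef}. Transferring that proof to $K^0$ requires the projective bundle theorem for $K^0$ of (derived) schemes as an external input --- true, but not formal. The cleaner of your two options is the second one: take $c_i^{K^0}(E) := \eta_K(c_i(E))$, so that the splitting principle in $K^0$ is the image of Theorem \fref{SplittingPrinciple} under $\eta_K$, and check that this normalizes correctly on line bundles ($\eta_K(e(\Ls)) = 1-[\Ls^\vee]$, since $\eta_K$ commutes with pushforwards and pullbacks). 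With that choice the argument is complete as written.
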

\begin{proof}
Proceed as in the proof of Lemma 4.1 in \cite{An} and set $\beta=1$.
\end{proof}

Next we take care of a toy case.

\begin{lem}
Let $E$ be a vector bundle on $X$ and let $s: X \simeq \Pb(\Oc_X) \hookrightarrow \Pb(\Oc_X \oplus E)$ be the natural quasi-smooth inclusion. Then $ch_m$ commutes with $s_!$.
\end{lem}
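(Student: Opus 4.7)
The plan is to reduce the lemma to the single identity $ch_m(s_!(1_X)) = s_!(1_X)$ in $\PCob^0_m(\Pb(\Oc_X \oplus E))$, exploiting the fact that $s$ admits $\pi: \Pb(\Oc_X \oplus E) \to X$ as a retraction. Since $\pi \circ s \simeq \mathrm{Id}_X$, every class $\alpha \in K^0(X)$ has the form $s^*(\pi^*\alpha)$. The projection formula in $K^0$ gives $s_!(\alpha) = \pi^*(\alpha) \bullet s_!(1_X)$, so by the ring-homomorphism property of $ch_m$ together with its compatibility with pullbacks,
$$ch_m(s_!(\alpha)) = \pi^*(ch_m(\alpha)) \bullet ch_m(s_!(1_X)).$$
The analogous projection formula in $\PCob^0_m$ yields $s_!(ch_m(\alpha)) = \pi^*(ch_m(\alpha)) \bullet s_!(1_X)$. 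Comparing these two expressions reduces the lemma to the claimed identity on the unit class.

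To identify $s_!(1_X)$ in $\PCob^0_m$, I would apply Proposition \fref{LinearEmbeddingIsVanishingLocus} to the split exact sequence $0 \to \Oc_X \to \Oc_X \oplus E \to E \to 0$, which realizes $s$ as the derived vanishing locus of a canonical section of $E(1)$ on $Y := \Pb(\Oc_X \oplus E)$. Hence $s_!(1_X) = e(E(1))$ by the definition of Euler classes. On the $K^0$ side, the Koszul complex associated to this regular closed immersion provides the resolution
$$[s_* \Oc_X] = \sum_{i=0}^r (-1)^i [\wedge^i E(1)^\vee] \in K^0(Y),$$
where $r = \mathrm{rank}(E)$.

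Finally, I would invoke the splitting principle. By Corollary \fref{InjPullback}, the pullback $\pi_{Y,E(1)}^*: \PCob^0_m(Y) \to \PCob^0_m(\wtil Y_{E(1)})$ is injective, so it suffices to verify the identity after passing to $\wtil Y_{E(1)}$, where $E(1)$ acquires a filtration with line-bundle quotients $\Ms_1,\ldots,\Ms_r$. On this space the Koszul class pulls back to $\prod_{i=1}^r \bigl(1 - [\Ms_i^\vee]\bigr)$ by the standard multiplicativity of $\lambda_{-1}$ for filtered bundles in $K$-theory, so
$$ch_m\bigl([s_* \Oc_X]\bigr) = \prod_{i=1}^r \bigl(1 - ch_m[\Ms_i^\vee]\bigr) = \prod_{i=1}^r c_1(\Ms_i),$$
using the defining formula $ch_m[\Ms_i^\vee] = 1 - c_1(\Ms_i)$ for line bundles. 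By iterated application of Lemma \fref{MultTopChern}, the Euler-class side pulls back to $\prod_i e(\Ms_i) = \prod_i c_1(\Ms_i)$. The two pullbacks agree and the lemma follows.

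The main subtlety is the splitting-principle step: the Koszul presentation of $[s_*\Oc_X]$ only becomes transparent after a filtration on $E(1)$ is available, and one must invoke the injectivity of $\pi_{Y,E(1)}^*$ to justify checking the identity on $\wtil Y_{E(1)}$. All remaining ingredients, namely the projection formula, the normalization property of Euler classes, and the multiplicativity of $ch_m$ together with its preservation of first Chern classes, are already established earlier in the paper.
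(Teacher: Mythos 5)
Your proof is correct and follows essentially the same route as the paper, which simply cites the preceding lemma (``$ch_m$ preserves Chern classes'') together with the argument of Lemma 4.2 of \cite{An}: both reductions identify $s_!(1_X)$ with the top Chern class of $E(1)$ on the precobordism side and with the Koszul class $\lambda_{-1}(E(1)^\vee)$ on the $K$-theory side, and then compare. The only difference is that you verify the key identity $ch_m\bigl(\lambda_{-1}(E(1)^\vee)\bigr) = e(E(1))$ directly via the splitting principle and Lemma \fref{MultTopChern} instead of invoking the previous lemma, which makes the argument self-contained but is not a genuinely different method.
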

\begin{proof}
Follows from the previous lemma as in the proof of Lemma 4.2 in \cite{An}.
\end{proof}

\begin{prop}\label{ChmCommutesWithPushforward}
The map $ch_m$ commutes with Gysin pushforwards along arbitrary projective quasi-smooth morphisms.
\end{prop}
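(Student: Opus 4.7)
The plan is to follow the strategy of the analogous argument in \cite{An}, reducing the general case to the toy case of the preceding lemma via standard devissage for projective quasi-smooth morphisms. Any projective quasi-smooth morphism $f: V \to Y$ factors as $f = p \circ i$, where $i: V \hookrightarrow \Pb^n_Y$ is a quasi-smooth closed embedding (induced by a very ample line bundle on $V$) and $p: \Pb^n_Y \to Y$ is the projection. By functoriality of Gysin pushforwards in both $K^0$ and $\PCob^0_m$, together with the fact that $ch_m$ respects pullbacks and the ring structure, it suffices to handle $i$ and $p$ separately.

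For the quasi-smooth closed embedding $i: Z \hookrightarrow X$, I would use a derived deformation to the normal bundle: a projective quasi-smooth morphism $M \to \Pb^1 \times X$ together with a quasi-smooth closed embedding $\wtil i: \Pb^1 \times Z \hookrightarrow M$ whose restriction over $\infty \in \Pb^1$ recovers $i$ and whose restriction over $0$ recovers the zero section $s: Z \hookrightarrow \Nc_{Z/X}$. The homotopy fibre relation built into the definition of $\PCob^0_m$, together with its Koszul-resolution counterpart in $K^0$, then reduces the embedding case to that of zero sections of vector bundles. A zero section $s: Z \hookrightarrow E$ in turn factors through the toy case: the complement of $\Pb_Z(E) \subset \Pb_Z(\Oc \oplus E)$ is precisely the total space of $E$, and the distinguished section $s_0: Z \hookrightarrow \Pb_Z(\Oc \oplus E)$ (coming from the summand $\Oc \hookrightarrow \Oc \oplus E$) corresponds to the zero section of $E$ under this identification. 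Writing $j: E \hookrightarrow \Pb_Z(\Oc \oplus E)$ for the open immersion, the resulting square relating $s$ and $s_0$ via $j$ is homotopy Cartesian, so the push-pull formula gives $s_! = j^* \circ s_{0!}$. Since $ch_m$ commutes with $j^*$ (pullbacks) and with $s_{0!}$ (the toy case), it commutes with $s_!$.

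For the projection $p: \Pb^n_Y \to Y$, I would invoke the presentation $\PCob^*(\Pb^n_Y) \cong \PCob^*(Y)[t]/(t^{n+1})$ (with $t = e(\Oc(1))$) recalled from \cite{AY} in Section \fref{FGLSect}, together with the analogous presentation of $K^0(\Pb^n_Y)$ as a free $K^0(Y)$-module on $1, [\Oc(-1)], \ldots, [\Oc(-n)]$. Since $ch_m([\Oc(-1)]) = 1 - t$ by construction, these bases correspond under $ch_m$, and the needed compatibility reduces to a direct computation of the pushforwards $p_!(t^k)$ and $p_*([\Oc(-k)])$: the former equals $[\Pb^{n-k}_Y \to Y]$, which becomes $1 \in \PCob^0_m(Y)$ under the multiplicative formal group law, matching the Serre-vanishing computation on the $K^0$ side.

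The main technical obstacle is rigorously implementing the derived deformation-to-the-normal-bundle step and identifying its $K^0$-counterpart with the homotopy fibre relation in $\PCob^0_m$. Once that reduction is in hand, the rest of the argument --- the open-immersion push-pull for the zero section and the explicit projective-bundle computation --- follows by direct calculation using the multiplicative formal group law on $\PCob^0_m$ together with basic Koszul/Serre-vanishing facts on the K-theory side, supplemented when necessary by the splitting principle (Theorem \fref{SplittingPrinciple}) to reduce to line bundles.
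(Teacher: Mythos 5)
The paper's own proof is a one-line citation of Lemma 4.4 of \cite{An}, and your devissage --- factor $f$ through $\Pb^n_Y$, handle the projection via the weak projective bundle formula together with the multiplicative formal group law (your computation $p_!(t^k)=[\Pb^{n-k}_Y\to Y]=1$ in $\PCob^0_m(Y)$ versus $p_*[\Oc(-k)]=0$ for $1\le k\le n$ is correct), and handle the quasi-smooth embedding by a derived deformation to the normal bundle reducing to the toy case of the preceding lemma --- is precisely the strategy that citation points to. One correction: the special fibre of the deformation space $\bl_{Z\times\infty}(X\times\Pb^1)$ is the sum of virtual Cartier divisors $\Pb_Z(\Nc_{Z/X}\oplus\Oc)$ and $\bl_Z(X)$, and the deformed embedding of $\Pb^1\times Z$ meets it in the section $Z\hookrightarrow\Pb_Z(\Nc_{Z/X}\oplus\Oc)$; no fibre of the degeneration is the open total space $\Nc_{Z/X}$, so the reduction lands directly in the toy case and your intermediate step through the total-space zero section followed by push-pull along the open immersion $j$ is a harmless but unnecessary detour that does not reflect what the degeneration actually produces. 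The genuine crux --- propagating the identity $ch_m(i_*\alpha)=i_!(ch_m(\alpha))$ through the degeneration, which requires base-change compatibility of the $K$-theoretic pushforward along the fibres over $0$ and $\infty$ and careful bookkeeping with the second component $\bl_Z(X)$ of the special fibre --- is flagged by you as the main technical obstacle but not carried out; since the paper defers entirely to \cite{An} for exactly this point, your proposal matches its level of detail rather than exceeding or falling short of it.
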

\begin{proof}
Proved in the same way as Lemma 4.4 of \cite{An}.
\end{proof}

We are now ready to prove the main theorem of this section.

\begin{proof}[Proof of Theorem \fref{GeneralCF}]
We already know that $\eta_m \circ ch_K$ is the identity transformation, so it is enough to show that $ch_m \circ \eta_K$ is. As both $ch_m$ and $\eta_K$ preserve the identity element and commute with Gysin pushforwards, the same is true for the composition $ch_K \circ \eta_K$ as well. But then it must be the identity on generators: indeed
\begin{align*}
ch_K \circ \eta_K ([V \xrightarrow{f} X]) &= ch_K \circ \eta_K (f_!(1_V)) \\
&= f_!(ch_K \circ \eta_K (1_V)) \quad (\text{Proposition \fref{ChmCommutesWithPushforward}}) \\
&= f_!(1_V) \\
&= [V \xrightarrow{f} X] 
\end{align*}
and we can conclude that $ch_K \circ \eta_K$ is the identity.
\end{proof}

As an immediate consequence of this theorem, we can prove the following analogoue of a classical theorem of Levine-Morel.

\begin{cor}\label{UnivPropOfKThy}
Suppose we are working over a Noetherian base ring $A$ of finite Krull dimension. Then the algebraic $K$-theory of vector bundles on quasi-projective $A$-schemes $X$ is the universal oriented cohomology theory (in the sense of Definition \fref{OrientedCohomDef}) satisfying the multiplicative formal group law $F(x,y) = x + y - xy$.
\end{cor}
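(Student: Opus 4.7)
The plan is to derive the corollary as a formal consequence of Theorem \fref{GeneralCF} combined with the universal property of $\PCob^*$ from Theorem \fref{UniversalPropOfPreCob}. The one substantive preliminary is to verify that $K^0$ itself is an oriented cohomology theory whose formal group law is multiplicative: the oriented cohomology structure was noted in the example following Definition \fref{OrientedCohomDef}, so the only point to check is axiom $(FGL)$. Using $(Norm)$ together with the Koszul resolution, one computes $e(\Ls) = 1 - [\Ls^\vee] \in K^0(X)$ for any line bundle $\Ls$, and then the direct calculation
$$e(\Ls_1) + e(\Ls_2) - e(\Ls_1) \bullet e(\Ls_2) = 1 - [\Ls_1^\vee][\Ls_2^\vee] = 1 - [(\Ls_1 \otimes \Ls_2)^\vee] = e(\Ls_1 \otimes \Ls_2)$$
identifies the formal group law on $K^0$ as the multiplicative one.

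Next, let $\Hb^*$ be any oriented cohomology theory on $d\Sc ch_A$ satisfying $F(x,y) = x + y - xy$. Theorem \fref{UniversalPropOfPreCob} produces a unique natural transformation $\eta: \PCob^* \to \Hb^*$ respecting pullbacks, Gysin pushforwards and the ring structure; by naturality this is, in particular, a map of $\Lb$-algebras when both theories carry the $\Lb$-algebra structure coming from the classifying maps of their respective formal group laws. Since the formal group law on $\Hb^*$ is multiplicative, the structure map $\Lb \to \Hb^*(\mathrm{pt})$ factors through $\Zb_m$; tensoring $\eta$ with $\Zb_m$ over $\Lb$ therefore yields a natural transformation
$$\bar\eta: \Zb_m \otimes_\Lb \PCob^*(X) \to \Zb_m \otimes_\Lb \Hb^*(X) = \Hb^*(X).$$
Precomposing with the inverse of the Conner--Floyd isomorphism $K^0(X) \cong \Zb_m \otimes_\Lb \PCob^*(X)$ from Theorem \fref{GeneralCF} then gives the sought natural transformation $K^0 \to \Hb^*$, which automatically commutes with pullbacks, Gysin pushforwards and ring structure since all these compatibilities are inherited from $\eta$.

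For uniqueness, any natural transformation $K^0 \to \Hb^*$ of the required type precomposes with the canonical surjection $\PCob^* \twoheadrightarrow K^0$ (obtained by combining the quotient $\PCob^* \twoheadrightarrow \PCob^0_m$ with Theorem \fref{GeneralCF}) to produce a natural transformation $\PCob^* \to \Hb^*$ satisfying all the hypotheses of Theorem \fref{UniversalPropOfPreCob}, and hence must coincide with $\eta$; surjectivity of $\PCob^* \twoheadrightarrow K^0$ then forces the downstairs map to be unique as well. The main thing to be careful about is simply the tensor-product manipulations in the middle step; no substantive geometric obstacle is expected, since all the hard work has already been carried out in establishing Theorems \fref{UniversalPropOfPreCob} and \fref{GeneralCF}.
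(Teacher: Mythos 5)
Your overall strategy is the intended one: the paper offers no separate proof of this corollary, treating it as the evident combination of Theorem \fref{GeneralCF} with the universal property of Theorem \fref{UniversalPropOfPreCob}, and your verification that $K^0$ carries the multiplicative formal group law (via $e(\Ls) = 1 - [\Ls^\vee]$ from $(Norm)$ and the Koszul resolution) and your uniqueness argument via the surjection $\PCob^* \twoheadrightarrow K^0$ are both fine.

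There is, however, a gap in the middle step, where you tensor $\eta$ with $\Zb_m$ over $\Lb$. For $\Zb_m \otimes_\Lb \eta$ to be defined you need $\eta$ to be $\Lb$-linear for \emph{compatible} $\Lb$-structures, i.e.\ you need the composite $\Lb \to \PCob^*(pt) \xrightarrow{\eta} \Hb^*(pt)$ to kill the kernel of $\Lb \to \Zb_m$; equivalently, $\eta$ must send the coefficients $a_{ij}$ of the precobordism formal group law to the multiplicative coefficients in $\Hb^*(pt)$. Applying $\eta$ to Theorem \fref{FGLTheorem} only tells you that $\sum_{i,j}\eta(a_{ij})\,e(\Ls_1)^i \bullet e(\Ls_2)^j = \sum_{i,j} b_{ij}\,e(\Ls_1)^i \bullet e(\Ls_2)^j$ for all pairs of line bundles; since Definition \fref{OrientedCohomDef} does not include a projective bundle formula, the monomials $e(\Oc(1,0))^i \bullet e(\Oc(0,1))^j$ on $\Pb^n \times \Pb^m$ need not be linearly independent over $\Hb^*(pt)$, so you cannot compare coefficients and conclude $\eta(a_{ij}) = b_{ij}$. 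In other words, ``the structure map $\Lb \to \Hb^*(pt)$ factors through $\Zb_m$'' is true for the classifying map of the multiplicative law on $\Hb^*$, but that is a priori a different map from $\eta$ composed with the classifying map of the precobordism law, and it is the latter that your tensor construction uses.

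The gap is easily closed by arguing as in the proof of Proposition \fref{TensoringToGetUnivAdditiveAndMultProp}: use the presentation $\Zb_m \otimes_\Lb \PCob^* \cong \PCob^0_m$ of Definition \fref{MultiplicativePCob}. The cycle-level transformation $\Mc^*_+ \to \Hb^*$ of Lemma \fref{MapFromCycleGroup} descends to $\PCob^0_m$ because $\Hb^*$ satisfies the homotopy fibre relation (a consequence of $(PP)$ and $(Norm)$, as in the verification of $(Norm)$ in that proposition) and satisfies the relation $e(\Ls_1 \otimes \Ls_2) = e(\Ls_1) + e(\Ls_2) - e(\Ls_1)\bullet e(\Ls_2)$ by hypothesis. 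This produces the map $\PCob^0_m \to \Hb^*$ directly from the defining relations, with no need to identify $\eta(a_{ij})$ with $b_{ij}$; composing with the isomorphism of Theorem \fref{GeneralCF} gives $K^0 \to \Hb^*$, and the rest of your argument goes through unchanged.
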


\subsection{Intersection theory and Riemann-Roch}\label{RRSubSect}

Recall that we can define a \emph{Chern character} map $ch_a: K^0(X) \to \Qb \otimes \PCob^*_a(X)$ by the formula
$$[\Ls] \mapsto e^{-c_1(\Ls)}$$
on line bundles and then extending formally to all vector bundles by using the splitting principle, and requiring $ch_a$ to be additive. Similarly, we may define the \emph{Todd class} $\Td: K^0(X) \to \Qb \otimes \PCob_a^*(X)$ by the formula 
$$[\Ls] \mapsto {c_1(\Ls) \over e^{-c_1(\Ls)} - 1}$$
on line bundles, and extending formally to all vector bundles by splitting principle, and requiring that $\Td(E \oplus F) = \Td(E) \bullet \Td(F)$. Note for $X$ quasi-projective, both classes are well defined on $K$-theory classes of perfect complexes, since $K^0(X)$ has a presentation as the Grothendieck group of vector bundles modulo exact sequences.

The purpose of this section is to prove the following theorem.

\begin{thm}\label{GeneralRR}
The maps
$$ch_a: K^0(X) \to \Qb \otimes \PCob_a^*(X)$$
are ring homomorphisms and commute with pullbacks. Moreover, given $f: X \to Y$ quasi-smooth and projective, we have that
\begin{equation}\label{ToddClasPF}
f_! \bigl( ch_a(\alpha) \bullet \Td(\Lb_{f}) \bigr) =  ch_a(f_!(\alpha))
\end{equation}
for all $\alpha \in K^0(X)$, where $\Lb_{f}$ is the relative cotangent complex of $X \to Y$. Finally, the induced map
$$ch_a: \Qb \otimes K^0(X) \to \Qb \otimes \PCob_a^*(X)$$
is an isomorphism.
\end{thm}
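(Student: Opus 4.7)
The plan is to handle the three parts in sequence, with the Riemann--Roch formula being the technical core.

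For the first part, I would verify the ring homomorphism property on line bundles, where everything is explicit: since the FGL on $\PCob^*_a$ is additive by Definition \fref{AdditivePCob}, we have $ch_a(\Ls_1 \otimes \Ls_2) = e^{-c_1(\Ls_1) - c_1(\Ls_2)} = ch_a(\Ls_1) \bullet ch_a(\Ls_2)$. Additivity in short exact sequences follows from the Whitney sum formula (Theorem \fref{WSF}), and multiplicativity on general vector bundles reduces to the line-bundle case via the splitting principle (Theorem \fref{SplittingPrinciple}), the reduction going through pullback to the blow-up $\wtil X_E$ of Construction \fref{ChernClassCons}, which is injective by Corollary \fref{InjPullback}. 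Naturality of $ch_a$ under pullbacks is immediate from the naturality of Chern classes.

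For the Riemann--Roch formula, the strategy is to apply the universal property of $K^0$ (Corollary \fref{UnivPropOfKThy}) to $\Qb \otimes \PCob^*_a$ equipped with a \emph{twisted} orientation
\[ f_!^{new}(\alpha) := f_!\bigl(\alpha \bullet \Td(\Lb_f)\bigr). \]
The key verifications are: (i) functoriality, via the cotangent-complex triangle $f^*\Lb_g \to \Lb_{gf} \to \Lb_f$ giving $\Td(\Lb_{gf}) = f^*\Td(\Lb_g) \bullet \Td(\Lb_f)$ by multiplicativity of $\Td$, combined with the usual projection formula; (ii) the push--pull and projection formulas for $f_!^{new}$ follow formally from those of $f_!$ since the cotangent complex is stable under derived base change; (iii) the new Euler class of a line bundle is $e^{new}(\Ls) = 1 - e^{c_1(\Ls)}$, computed as follows: if $i: D \hookrightarrow X$ is cut out by a section of $\Ls$, then $\Lb_i \simeq \Ls^\vee|_D[1]$, so $\Td(\Lb_i) = \Td(\Ls^\vee|_D)^{-1}$, and applying the projection formula with $i_!(1_D) = c_1(\Ls)$ yields the claim; (iv) these $e^{new}$ satisfy the multiplicative FGL $x + y - xy$, by direct expansion. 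Corollary \fref{UnivPropOfKThy} then produces a unique natural transformation $\eta: K^0 \to \Qb \otimes \PCob^*_a$ of oriented cohomology theories (with the twisted orientation on the target), and its commutativity with Gysin pushforwards in the twisted sense unpacks as formula (\fref{ToddClasPF}). One checks $\eta = ch_a$ on line bundles (hence everywhere by splitting principle): $\eta([\Ls]) = \eta(1 - e_K(\Ls^\vee)) = 1 - e^{new}(\Ls^\vee) = e^{c_1(\Ls^\vee)} = e^{-c_1(\Ls)}$.

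For the isomorphism after $\Qb$-tensoring, the point is that over $\Qb$ the multiplicative and additive formal group laws are strictly isomorphic via the formal logarithm $\log_m(x) = -\log(1-x) = \sum_{n \geq 1} x^n/n$, which is a well-defined polynomial expression on nilpotent elements. Running the same universal-property argument with the roles reversed—applying Theorem \fref{UniversalPropOfPreCob} to $\Qb \otimes K^0$ equipped with a dually twisted orientation whose line-bundle Euler class is the formal logarithm of the multiplicative Euler class—one obtains, via Proposition \fref{TensoringToGetUnivAdditiveAndMultProp}, a map $\tau: \Qb \otimes \PCob^*_a \to \Qb \otimes K^0$ going in the opposite direction. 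Both composites $ch_a \circ \tau$ and $\tau \circ ch_a$ are natural transformations preserving the multiplicative structure whose action on classes of line bundles can be read off from the explicit formulas for $ch_a$ and $\tau$ together with $\log \circ \exp = \mathrm{id}$; the uniqueness parts of the universal properties then force both composites to equal the identity. The main obstacle throughout is the Euler-class computation (iii): careful bookkeeping of the shift $\Lb_i \simeq \Ls^\vee|_D[1]$ and its effect on the Todd class via $\Td(E[1]) = \Td(E)^{-1}$ is required, but once this is in place the rest of the proof is a formal consequence of the universal properties.
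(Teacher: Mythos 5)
Your proposal is correct and follows essentially the same route as the paper: twisting the orientation of $\Qb \otimes \PCob^*_a$ by $\Td(\Lb_f)$, computing the twisted Euler class $c_1^t(\Ls) = 1 - e^{c_1(\Ls)}$ from $\Lb_{s_0} \simeq \Ls^\vee[1]$, invoking the universal property of $K^0$ (Corollary \fref{UnivPropOfKThy}) to produce $ch_a$ together with \eqref{ToddClasPF}, and then inverting over $\Qb$ by a dually twisted orientation on $\Qb \otimes \PCob^0_m$ with the formal logarithm as Euler class. The only cosmetic differences are that the paper obtains the ring-homomorphism property directly from the universal property rather than checking it first, and that it makes explicit the Todd-class cancellations $\Td'(\Lb_f)^t \bullet \Td(\Lb_f) = 1$ that you summarize via $\log \circ \exp = \mathrm{id}$.
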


The proof is the same as in \cite{An}, but we have decided to write it down in a more explicit form here avoiding the bivariant formalism.

\begin{proof}
Let us start by \emph{twisting} the theory $\Qb \otimes \PCob^*_a$. We define a new oriented cohomology theory $\Hb_t^0$ by setting
$$\Hb_t^0(X) := \Qb \otimes \PCob^*_a$$
and
$$f_!^t(-) := f_!(- \bullet \Td(\Lb_f))$$ 
for any $f: X \to Y$ projective and quasi-smooth. The pullbacks of $\Hb_t^0$ are by definition the pullbacks of $\Qb \otimes \PCob^*_a$. Note that the pushforwards no longer respect degrees, and therefore we do not get a natural grading on $\Hb^0_t$. To prove that $\Hb^0_t$ is an oriented cohomology theory, we need to check that it satisfies the axioms in Definition \fref{OrientedCohomDef}.
\begin{enumerate}
\item[$(Fun)$] This is a computation: given $\alpha \in \PCob^*(X)$
\begin{align*}
(f \circ g)^t_! (\alpha) &= f_! \Bigl( g_! \bigl(\alpha \bullet \Td(\Lb_{f \circ g})\bigr)\Bigr) \\
&= f_! \Bigl( g_! \bigl(\alpha\bullet \Td(\Lb_g) \bullet g^* \bigl(\Td(\Lb_f) \bigr) \bigr)\Bigr) \\
&= f_! \Bigl( g_! \bigl(\alpha\bullet \Td(\Lb_g)\bigr) \bullet \Td(\Lb_f) \Bigr) \quad (\text{projection formula}) \\
&= f^t_!(g^t_!(\alpha)),
\end{align*}
proving the claim.

\item[$(PP)$] Follows easily from the fact that Todd-classes and cotangent compexes are stable under derived pullbacks.
\item[$(Norm)$] Follows from $(PP)$ and the homotopy fibre relation as in the proof of Proposition \fref{TensoringToGetUnivAdditiveAndMultProp}.
\item[$(FGL)$] Note that the cotangent complex of the zero section $s_0: X \to \Ls$ is $\Ls^\vee [1]$. Let us denote by $\pi$ the natural projection $\Ls \to X$. We can now compute the twisted Chern class for a line bundle $\Ls$ on $X$ in terms of the untwisted Chern classes (satisfying the formal group law)
\begin{align}\label{FirstTwistedC1}
c_1^t(\Ls) &= s_0^* s^t_{0!} (1_X) \\
&= s_0^* s_{0!} (1_X \bullet \Td(\Ls^\vee)^{-1}) \notag\\
&= s_0^* \bigl( s_{0!} (1_X) \bullet \pi^*\Td(\Ls^\vee)^{-1} \bigr) \quad (\text{projection formula}) \notag\\
&= s_0^*\bigl( s_{0!}(1_X) \bigr) \bullet \Td(\Ls^\vee)^{-1} \notag\\
&= c_1(\Ls) \bullet \Td(\Ls^\vee)^{-1} \notag\\
&= c_1(\Ls) \bullet  {e^{-c_1(\Ls^\vee)} - 1 \over c_1(\Ls^\vee)} \notag\\
&= 1 - e^{c_1(\Ls)}.\notag
\end{align}
Therefore
\begin{align*}
c^t_1(\Ls_1 \otimes \Ls_2) &= 1 - e^{c_1(\Ls_1 \otimes \Ls_2)}\\
&= 1 - e^{c_1(\Ls_1)} \bullet e^{c_1(\Ls_2)} \\ 
&= \bigl(1 - e^{c_1(\Ls_1)}\bigr) + \bigl(1 - e^{c_1(\Ls_2)}\bigr) - \bigl(1 - e^{c_1(\Ls_1)}\bigr) \bullet \bigl(1 - e^{c_1(\Ls_2)}\bigr) \\
&= c^t_1(\Ls_1) + c^t_1(\Ls_2) - c^t_1(\Ls_1) \bullet c^t_1(\Ls_2),
\end{align*}
and $\Hb_t^0$ satisfies the multiplicative formal group law. Nilpotency of Chern classes follows as in Proposition \fref{TensoringToGetUnivAdditiveAndMultProp}.
\end{enumerate}
The universal property of $K$-theory (Corollary \fref{UnivPropOfKThy}) therefore gives a morphisms of rings $ch_a': K^0(X) \to \Qb \otimes \PCob_a^*(X)$ commuting with pullbacks and satisfying formula (\fref{ToddClasPF}). To prove that $ch_a'$ coincides with $ch_a$ defined earlier, we note that $ch_a'$ must send Chern classes in $K$-theory to twisted Chern classes in the target, and compute
\begin{align*}
ch_a'([\Ls]) &= ch_a'(1 - c_1(\Ls^\vee)) \\
&= 1 - c_1^t(\Ls^\vee) \\
&= e^{c_1(\Ls^\vee)} \\
&= e^{-c_1(\Ls)} \\
&= ch_a([\Ls]).
\end{align*}
We have therefore proven everything else except equivalence with rational coefficients.

In order to finish the proof, it is useful to consider the presentation $\PCob^0_m$ for $K$-theory (see Theorem \fref{GeneralCF}). We want to find morphisms
$$\phi: \Qb \otimes \PCob^*_a(X) \to \Qb \otimes \PCob^0_m(X)$$ 
giving inverses to $ch_a$. To do this, consider the Todd classes in $\Qb \otimes \PCob^0_m(X)$ defined on line bundles by the formula
$$\Td'(\Ls) := {c_1(\Ls^\vee) \over \log(1 - c_1(\Ls^\vee))}$$
and the theory $\Hb^*_{t'}$ obtained from $\Qb \otimes \PCob^0_m(X)$ by a twisting construction as above. One then computes that
\begin{align}\label{SecondTwistedC1}
c_1^{t'}(\Ls) &= c_1(\Ls) \bullet \Td'(\Ls^\vee)^{-1} \\
&= c_1(\Ls) \bullet {\log\bigl(1 - c_1(\Ls)\bigr) \over c_1(\Ls)} \notag \\
&= \log\bigl( 1 - c_1(\Ls) \bigr) \notag
\end{align}
and therefore
\begin{align*}
c_1^{t'} (\Ls_1 \otimes \Ls_2) &= \log\bigl(1 - c_1(\Ls_1 \otimes \Ls_2) \bigr) \\
&= \log \bigl((1 - c_1(\Ls_1)) \bullet (1 - c_1(\Ls_2))  \bigr) \\
&= \log \bigl(1 - c_1(\Ls_1)\bigr) + \log \bigl( 1 - c_1(\Ls_2)  \bigr) \\
&= c_1^{t'}(\Ls_1) + c_1^{t'}(\Ls_2)
\end{align*}
so that $\Hb^*_{t'}$ is a oriented cohomology theory satisfying the additive formal group law. The universal property of $\PCob^*_a$ induces a unique morphisms
$$\phi: \Qb \otimes \PCob^*_a(X) \to \Qb \otimes \PCob_m^0(X)$$
which are defined on the level of cycles as
$$[V \xrightarrow{f} X] \mapsto f_!(1_V \bullet \Td'(\Lb_f)) \in \Qb \otimes \PCob_m^0(X).$$
Note that also the morphism $ch_a$ has a cycle level description as
$$[V \xrightarrow{f} X] \mapsto f_!(1_V \bullet \Td(\Lb_f)) \in \Qb \otimes \PCob_a^*(X).$$
These are inverses of each other. This is a computation, but before doing it, we note that we have to be slightly careful, since the Chern classes will be different in different theories. The first composition gives
\begin{align*}
ch_a\bigl(\phi([V \xrightarrow{f} X])\bigr) &= ch_a\bigl(f_!(1_V \bullet \Td'(\Lb_f))\bigr) \\
&= f_!(1_V \bullet \Td'(\Lb_f)^t \bullet \Td(\Lb_f))
\end{align*}
and the second one gives
\begin{align*}
\phi\bigl(ch_a([V \xrightarrow{f} X])\bigr) &= \phi\bigl(f_!(1_V \bullet \Td(\Lb_f))\bigr) \\
&= f_!(1_V \bullet \Td(\Lb_f)^{t'} \bullet \Td'(\Lb_f))
\end{align*}
where the superscripts $t,t'$ indicate the necessity of taking twists. We are done if we can show that $\Td'(\Lb_f)^t \bullet \Td(\Lb_f) = 1$ and $\Td(\Lb_f)^{t'} \bullet \Td'(\Lb_f)$.

But this reduces via splitting principle to checking the identity for line bundles. We compute that
\begin{align*}
\Td'(\Ls)^t \bullet \Td(\Ls) &= {c^t_1(\Ls^\vee) \over \log(1 - c^t_1(\Ls^\vee))} \bullet {c_1(\Ls) \over e^{-c_1(\Ls)} - 1} \\
&= {1 - e^{c_1(\Ls^\vee)} \over \log(e^{c_1(\Ls^\vee)})} \bullet {c_1(\Ls) \over e^{-c_1(\Ls)} - 1} \quad (\text{\fref{FirstTwistedC1}}) \\
&= {1 - e^{-c_1(\Ls)} \over -c_1(\Ls)} \bullet {c_1(\Ls) \over e^{-c_1(\Ls)} - 1} \\
&= 1
\end{align*}
and
\begin{align*}
\Td(\Ls)^{t'} \bullet \Td'(\Ls) &= {c_1^{t'}(\Ls) \over e^{-c_1^{t'}(\Ls)} - 1} \bullet {c_1(\Ls^\vee) \over \log(1 - c_1(\Ls^\vee))} \\
&= {\log\bigl( 1 - c_1(\Ls) \bigr)  \over e^{-\log\bigl( 1 - c_1(\Ls) \bigr) } - 1} \bullet {c_1(\Ls^\vee) \over \log(1 - c_1(\Ls^\vee))} \quad (\text{\fref{SecondTwistedC1}}) \\
&= -{\log\bigl( 1 - c_1(\Ls^\vee) \bigr)  \over e^{\log\bigl( 1 - c_1(\Ls^\vee) \bigr) } - 1} \bullet {c_1(\Ls^\vee) \over \log(1 - c_1(\Ls^\vee))} \\
&= -{\log\bigl( 1 - c_1(\Ls^\vee) \bigr)  \over - c_1(\Ls^\vee) } \bullet {c_1(\Ls^\vee) \over \log(1 - c_1(\Ls^\vee))} \\
&= 1
\end{align*}
which shows that $\phi$ and $ch_a$ are inverses of each other and finishes the proof.
\end{proof}

\subsection{Projective bundle formula}\label{PBFSubSect}

The purpose of this section is to generalize the projective bundle formula of \cite{AY} on trivial projective bundles to hold for arbitrary projective bundles. More precisely, we want to prove the following theorem:

\begin{thm}[Projective bundle formula]\label{PBF}
Let $X$ be a quasi-projective derived scheme over a Noetherian ring $A$ and let $E$ be a vector bundle of rank $r$ on $X$. Then
$$\PCob^*(\Pb(E)) \cong \PCob^*(X)[t]/ \bigl(c_r(E^\vee) - c_{r-1}(E^\vee)t + \cdots + (-1)^r t^r \bigr)$$
where $t \in \PCob^1(\Pb(E))$ is the first Chern class of $\Oc(1)$.
\end{thm}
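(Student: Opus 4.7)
Write $f(t) = \sum_{i=0}^r (-1)^i c_{r-i}(E^\vee) t^i$ and let $t = c_1(\Oc(1))$. Theorem \fref{ProjectiveBundleChernClass} says $f(t) = 0$ in $\PCob^*(\Pb(E))$, so there is a well-defined $\PCob^*(X)$-algebra homomorphism
$$\Phi \colon \PCob^*(X)[t]/f(t) \longrightarrow \PCob^*(\Pb(E)), \qquad t \mapsto c_1(\Oc(1)).$$
Since $f$ has leading coefficient $(-1)^r$, the source is a free $\PCob^*(X)$-module of rank $r$ on $\{1, t, \dots, t^{r-1}\}$. Under this identification, $\Phi$ coincides with the morphism $\Ps roj$ of Lemma \fref{PBFInj} and is therefore injective. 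The main content of the theorem is thus the surjectivity of $\Phi$.

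For surjectivity, the plan is to exploit the structure theorem for $\PCob^{*,1}(X)$ from \cite{AY} (Theorems 6.12 and 6.13), as singled out in the introduction. Given a cycle $[V \xrightarrow{g} \Pb(E)]$ representing an arbitrary $\beta \in \PCob^*(\Pb(E))$, I would package it, together with the pulled-back tautological line bundle $g^*\Oc(1)$, into a class in $\PCob^{*-r+1, 1}(X)$. The \cite{AY} presentation rewrites such an enhanced class as a $\PCob^*(X)$-linear combination of polynomial expressions in the Euler class of the line bundle data (of the same shape as Lemmas \fref{ClassOfLine} and \fref{ClassOfProjectivizedLine}); transported back to $\Pb(E)$ via the projection formula and push-pull along $\pi$, this exhibits $\beta$ as a $\PCob^*(X)$-linear combination of powers $t^i$ for various $i \geq 0$. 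The nilpotence of Chern classes (Theorem \fref{NilpChernClass}) ensures that any formal series arising from the \cite{AY} formulas become finite sums, and then the relation $f(t) = 0$ truncates to $i < r$, landing $\beta$ in the image of $\Phi$.

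I expect the main obstacle to be the bookkeeping required to pass from the \cite{AY} presentation of $\PCob^{*,1}(X)$ (where the decorating line bundle varies freely from cycle to cycle) back to the rigid tautological setting on $\Pb(E)$, where the universal surjection $\pi^*E^\vee \twoheadrightarrow \Oc(1)$ forces the precise relation $f(t) = 0$ rather than some weaker one. Making the $\PCob^{*,1}$-style identities compatible with this extra rigidity, so that all higher powers $t^i$ with $i \geq r$ may be reduced via $f(t)$ rather than merely via the nilpotence of $t$, is the delicate part; once this compatibility is in place, surjectivity and hence the PBF follow formally.
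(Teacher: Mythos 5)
Your injectivity half is fine: $f(t)=0$ by Theorem \fref{ProjectiveBundleChernClass}, the quotient ring is free over $\PCob^*(X)$ on $1,t,\dots,t^{r-1}$, and Lemma \fref{PBFInj} gives injectivity of $\Phi$. You have also correctly located where surjectivity must come from, namely the comparison with $\PCob^{*,1}(X)$. But the surjectivity half has a genuine gap, and it is not mere bookkeeping. The map you propose, $[V\xrightarrow{g}\Pb(E)]\mapsto[V\to X,\,g^*\Oc(1)]$, goes \emph{from} $\PCob^*(\Pb(E))$ \emph{to} $\PCob^{*,1}(X)$; there is no map in the other direction, so ``transporting back'' a rewriting performed in $\PCob^{*,1}(X)$ presupposes two things you never establish. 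First, that this comparison map $\iota$ is injective --- otherwise knowing the shape of $\iota(\beta)$ tells you nothing about $\beta$. This is a substantial step in its own right: the paper proves it by sandwiching $\PCob^*(\Pb(E))$ into $\colim_n\PCob^*(\Pb(\Ac^{\oplus n}\oplus E))$ for a line bundle $\Ac$ with $\Ac\otimes E^\vee$ globally generated, and showing that this colimit is isomorphic to $\PCob^{*,1}(X)$ (Lemmas \fref{SurjDoesNotMatter}, \fref{jInjLem}, \fref{PrecobordismOfLinesIsPInfty}, Proposition \fref{EmbeddingToTriv}, all resting on Proposition \fref{SummandGivesInjectivePush}). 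Second, that the image of $\iota$ is exactly the $\PCob^*(X)$-span of $\iota(1),\iota(t),\dots,\iota(t^{r-1})$.

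That identification of the image is where the real work lies. The operator $\partial_{c_1}$ on $\PCob^{*,1}(X)$ intertwines with multiplication by $t$ under $\iota$, so writing $\iota(\beta)=\sum_i a_i\bullet[\Pb^i,\Oc(1)]$ in the basis of \cite{AY}, the relation (\fref{PbEq2}) forces the recursion $\sum_{j=0}^r(-1)^j c_{r-j}(E^\vee)\bullet a_{n+j}=0$ for all $n\ge 0$; hence the image sits inside a free module of rank $r$ determined by $a_0,\dots,a_{r-1}$. One must then show that $\iota(t^j)$, $0\le j\le r-1$, actually span this module, and this is not automatic because $\iota(t^j)$ is \emph{not} $[\Pb^j,\Oc(1)]$: the paper only obtains $\iota(t^j)\equiv[\Pb^{r-1-j},\Oc(1)]$ modulo nilpotents, and only after reducing (by twisting $E$ with a line bundle, which is harmless by the formal group law) to the case where $\Pb(E)$ embeds linearly into a trivial projective bundle $\Pb^{N-1}_X$ and computing $i_!(1_{\Pb(E)})$ as the top Chern class $c_{N-r}(E''(1))$ via Proposition \fref{LinearEmbeddingIsVanishingLocus}. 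None of these steps appears in your sketch, so the surjectivity of $\Phi$ --- the main content of the theorem --- remains unproved.
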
 

Our strategy is to first embed $\PCob^*(\Pb(E))$ to $\PCob^{*,1}(X)$, where $\PCob^{*,1}$ is the precobordism of line bundles from Section 6 of \cite{AY}. The Theorem \fref{PBF} will then follow from the results of \cite{AY} and some elementary manipulation of algebraic expressions. Let $\Ac$ be a line bundle on $X$ so that $\Ac \otimes E^\vee$ is globally generated, i.e., there exists a surjection $(\Ac^{\oplus R})^\vee \to E^\vee$ for some $R > 0$.

Next we are going to construct an embedding of $\PCob^*(\Pb(E))$ into $\PCob^{*,1}(X)$. In order to do so, we will first have to consider the diagram
\begin{equation}\label{ApproximationDiagram}
\begin{tikzcd}
& \colim_n \PCob^*(\Pb(\Ac^{\oplus n})) \arrow[->]{d}{j} \\
\PCob^*(\Pb(E)) \arrow[->]{r}{i} & \colim_n \PCob^*(\Pb(\Ac^{\oplus n} \oplus E)) \arrow[->]{d}{\Fc}  \\ 
& \PCob^{*,1}(X) 
\end{tikzcd}
\end{equation}
where the colimits over $n$ are induced by the obvious inclusions $\Ac^{\oplus n} \to \Ac^{\oplus n+1}$, where $\PCob^{*,1}(X)$ is the precobordism of line bundles over $X$ as in \cite{AY} Section 6. The maps $i$ and $j$ are induced by obvious inclusions of summands, and they are therefore injective by Proposition \fref{SummandGivesInjectivePush}. Finally, $\Fc$ is defined by the formula 
\begin{equation}\label{ForgetMapDef}
\Fc([V \xrightarrow{f} \Pb(\Ac^{\oplus n} \oplus E)]) := [V \xrightarrow{\pi \circ f} X, f^* \Oc(1)] \in \PCob^{*,1}(X),
\end{equation}
although it is not obvious yet that this gives rise to a well defined morphism. Note that if we can show that $\Fc$ is injective, then $\Fc \circ i$ provides the desired embedding. It turns out, that the right way to proceed is showing that $j$ and $\Fc$ are \emph{isomorphisms}.

Let us start with the well definedness of $\Fc$.

\begin{lem}
The formula (\fref{ForgetMapDef}) gives rise to a well defined homomorphism of Abelian groups $\Fc: \colim_n\PCob^*(\Pb(\Ac^{\oplus n} \oplus E)) \to \PCob^{*,1}(X)$.
\end{lem}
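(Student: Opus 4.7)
The plan is to verify well-definedness in three stages: first on the level of cobordism cycles, then by descending past the double point relations on each term, and finally by checking compatibility with the colimit transition maps.

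First, the formula tautologically defines, for each $n$, a homomorphism of Abelian groups
$$\tilde \Fc_n: \Mc_+^*(\Pb(\Ac^{\oplus n} \oplus E)) \to \Mc_+^{*,1}(X),$$
since composing a projective quasi-smooth morphism $f: V \to \Pb(\Ac^{\oplus n} \oplus E)$ with the smooth projective structure map $\pi: \Pb(\Ac^{\oplus n} \oplus E) \to X$ yields a projective quasi-smooth morphism of the same relative virtual dimension, and $f^*\Oc(1)$ is a line bundle on $V$.

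Second, I would show that $\tilde\Fc_n$ respects the double-point relations. Given a projective quasi-smooth $g_W: W \to \Pb^1 \times \Pb(\Ac^{\oplus n} \oplus E)$ realizing a double point degeneration with $W_\infty = A + B$, postcomposing with $\mathrm{id}_{\Pb^1} \times \pi$ yields a projective quasi-smooth morphism $W \to \Pb^1 \times X$ exhibiting an analogous degeneration with the same $W_0$, $A$, $B$ and $\Pb_{A \times_W B}(\Oc(A) \oplus \Oc)$. Equipping $W$ with the line bundle $\Oc_W := f_W^*\Oc(1)$, where $f_W$ is $g_W$ composed with projection to $\Pb(\Ac^{\oplus n} \oplus E)$, the restriction of $\Oc_W$ to each of these four loci is by functoriality the pullback of $\Oc(1)$ along the evident map. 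Hence the image of the double point relation under $\tilde\Fc_n$ is precisely an instance of the defining cobordism-with-bundles relation of $\PCob^{*,1}(X)$, and $\tilde\Fc_n$ descends to a well-defined $\Fc_n: \PCob^*(\Pb(\Ac^{\oplus n} \oplus E)) \to \PCob^{*,1}(X)$.

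Third, to see that the $\Fc_n$ assemble into a map from the colimit, I would note that the linear embedding $i_n: \Pb(\Ac^{\oplus n} \oplus E) \hookrightarrow \Pb(\Ac^{\oplus n+1} \oplus E)$ satisfies $i_n^*\Oc(1) \simeq \Oc(1)$, as follows from the universal property of the anticanonical bundle recalled in Section~2.1. Consequently, for any cycle $[V \xrightarrow{f} \Pb(\Ac^{\oplus n} \oplus E)]$ one has
$$\Fc_{n+1}\bigl(i_{n!}[V \xrightarrow{f} \Pb(\Ac^{\oplus n} \oplus E)]\bigr) = [V \to X, f^*i_n^*\Oc(1)] = [V \to X, f^*\Oc(1)] = \Fc_n\bigl([V \xrightarrow{f} \Pb(\Ac^{\oplus n} \oplus E)]\bigr),$$
so $\Fc_{n+1} \circ i_{n!} = \Fc_n$ and the $\Fc_n$ induce the required $\Fc$ out of $\colim_n \PCob^*(\Pb(\Ac^{\oplus n} \oplus E))$. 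No step poses a serious obstacle; the only point meriting real care is the verification in the second step that $\Oc_W$ restricts on the four loci to exactly the line bundles appearing in the relation defining $\PCob^{*,1}(X)$, which is however immediate from functoriality of pullback.
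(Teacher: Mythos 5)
Your proof is correct and follows essentially the same route as the paper's: reduce to each $\Fc_n$, observe that the image of a double point degeneration over $\Pb(\Ac^{\oplus n}\oplus E)$, equipped with the pullback of $\Oc(1)$, is exactly an instance of the defining relation of $\PCob^{*,1}(X)$, and check compatibility with the colimit via $i_n^*\Oc(1)\simeq\Oc(1)$. You are in fact slightly more explicit than the paper about the colimit compatibility, which the paper dismisses as clear.
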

\begin{proof}
It is enough to show that the maps $\Fc_n: \PCob^*(\Pb(\Ac^{\oplus n} \oplus E)) \to \PCob^{*,1}(X)$ defined by
$$[V \xrightarrow{f} \Pb(\Ac^{\oplus n} \oplus E)] \mapsto [V \xrightarrow{\pi \circ f} X, f^* \Oc(1)]$$
are well defined, as they clearly commute with the structure morphisms of the colimit. By definition it is enough to check that $\Fc_n$ descent the double point cobordism relation: suppose we have a projective quasi-smooth morphism
$W \to \Pb^1 \times \Pb(\Ac^{\oplus n} \oplus E)$
so that the fibre $W_\infty$ over $\infty$ is a sum of two divisors $A + B$ in $W$. Moreover, let us denote by $\Ls$ the pullback of $\Oc(1)$ to $W$, and by $W_0$ the fibre over $0$. We now compute that
\begin{align*}
&\Fc_n([A \to \Pb(\Ac^{\oplus n} \oplus E)] + [B \to \Pb(\Ac^{\oplus n} \oplus E)] - [\Pb_{A \cap B}(\Oc(A) \oplus \Oc) \to \Pb(\Ac^{\oplus n} \oplus E)]) \\
&= [A \to X, \Ls \vert_A] + [B \to X, \Ls \vert_B] - [\Pb_{A \cap B}(\Oc(A) \oplus \Oc) \to X, \Ls \vert_{A \cap B}] \\
&= [W_0 \to X, \Ls \vert_{W_0}] \quad (\text{\cite{AY} Remark 6.7}) \\
&= \Fc_n([W_0 \to \Pb(\Ac^{\oplus n} \oplus E)])
\end{align*}
proving the claim.
\end{proof}

The following lemma will do a lot of the work for us.

\begin{lem}\label{SurjDoesNotMatter}
Let $X$ be a derived scheme and $E$ a vector bundle on $X$. Suppose that we have a line bundle $\Ls$ on a quasi-smooth and projective derived $X$-scheme $V$ and two surjections $s_1, s_2: E^\vee \vert_V \to \Ls$ giving rise to two maps $f_1, f_2: V \to \Pb(E)$. Then
$$[V \xrightarrow{f_1} \Pb(E)] = [V \xrightarrow{f_2} \Pb(E)] \in \PCob^*(\Pb(E)).$$
\end{lem}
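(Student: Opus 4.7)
The plan is to construct a cobordism over $\Pb^1$ that linearly interpolates between the two surjections, and then apply the homotopy fibre relation defining $\PCob^*$. Viewing $s_1, s_2$ as sections of $E \otimes \Ls$ via the identification $\iHom(E^\vee, \Ls) \simeq E \otimes \Ls$, and letting $[s:t]$ be homogeneous coordinates on $\Pb^1$, I would form the section
$$\sigma := s \cdot s_1 + t \cdot s_2 \in \Gamma\bigl(V \times \Pb^1; E \otimes \Ls(1)\bigr),$$
whose restriction to $V \times \{[1:0]\}$ recovers $s_1$ (after trivialising $\Oc(1)$) and whose restriction to $V \times \{[0:1]\}$ recovers $s_2$.

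Next I would let $Z \hookrightarrow V \times \Pb^1$ be the derived vanishing locus of $\sigma$, a quasi-smooth closed immersion of virtual codimension $r := \mathrm{rank}(E)$. Since $s_1$ and $s_2$ are surjections, they are nowhere vanishing as sections of $E \otimes \Ls$, so $Z$ is disjoint from both distinguished fibres. Set $W := \bl_Z(V \times \Pb^1)$. By Lemma \fref{FiltrationOnBlowUpLem} there is on $W$ a canonical surjection $(E \otimes \Ls(1))^\vee \to \Oc(-\Ec)$, or equivalently $E^\vee\vert_W \to \Ls(1-\Ec)$, inducing a morphism $g: W \to \Pb(E)$. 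Combining $g$ with the structure map $W \to \Pb^1$ gives a morphism $W \to \Pb^1 \times \Pb(E)$ that is projective (since $W \to \Pb^1$ is projective and $\Pb(E)$ is proper) and quasi-smooth (since $\Pb^1 \times \Pb(E) \to \Pb^1$ is smooth while $W \to \Pb^1$ is projective quasi-smooth as the composition of a blow up of a quasi-smooth centre with the projection from $V \times \Pb^1$).

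To conclude via the homotopy fibre relation, I would identify the fibres of this cycle over $[1:0]$ and $[0:1]$ with $[V \xrightarrow{f_1} \Pb(E)]$ and $[V \xrightarrow{f_2} \Pb(E)]$ respectively. Since $Z$ avoids these two fibres, $W \to V \times \Pb^1$ is an isomorphism on neighbourhoods of both; in particular $\Ec$ restricts trivially and $\Oc(1)$ restricts trivially after pulling back to $V$. Applying Example \fref{ProjBlowUpEx} at each fibre then shows that the surjection $E^\vee\vert_W \to \Ls(1-\Ec)$ specialises to the defining surjection $s_1$ on $V \times \{[1:0]\}$ and to $s_2$ on $V \times \{[0:1]\}$, so the induced maps to $\Pb(E)$ are $f_1$ and $f_2$. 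The homotopy fibre relation then gives the desired equality in $\PCob^*(\Pb(E))$.

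The main point requiring care will be verifying that the canonical surjection of Lemma \fref{FiltrationOnBlowUpLem} really specialises to $s_1$ and $s_2$ on the two special fibres, rather than some twist; but this is precisely controlled by the naturality of the blow up construction together with Example \fref{ProjBlowUpEx}, which identifies the tautological surjection on the blow up with the dual of the defining section whenever that section is nowhere vanishing.
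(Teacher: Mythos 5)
Your proof is correct, but it takes a genuinely different route from the paper's. The paper avoids blowing up altogether: it embeds $\Pb(E)$ into $\Pb(E \oplus E)$ via the two summand inclusions $\iota_1, \iota_2$, observes that $x_0 s_1 + x_1 s_2 \colon E^\vee \oplus E^\vee \to \Ls(1)$ is surjective on all of $\Pb^1 \times V$ (at each point of $\Pb^1$ at least one coordinate is invertible and each $s_i$ is already surjective), and so obtains a cobordism in $\Pb^1 \times \Pb(E \oplus E)$ proving $\iota_{1!}[V \xrightarrow{f_1} \Pb(E)] = \iota_{2!}[V \xrightarrow{f_2} \Pb(E)]$; a second cobordism interpolating $s_2$ with itself gives $\iota_{1!}[V \xrightarrow{f_2} \Pb(E)] = \iota_{2!}[V \xrightarrow{f_2} \Pb(E)]$, and the conclusion follows from the injectivity of $\iota_{1!}$ (Proposition \fref{SummandGivesInjectivePush}). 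You instead stay on $\Pb(E)$, accept that the naive interpolation $s\, s_1 + t\, s_2$ degenerates along a locus $Z$, and resolve the indeterminacy by blowing up $Z$, using Proposition \fref{ProjectiveBlowUp} and Example \fref{ProjBlowUpEx} to extend the family over $W = \bl_Z(V \times \Pb^1)$ and to identify the two distinguished fibres with $f_1$ and $f_2$. Both arguments are valid; yours uses a single application of the homotopy fibre relation and no pushforward-injectivity input, at the price of invoking the blow-up machinery directly (which the paper's Proposition \fref{SummandGivesInjectivePush} also relies on under the hood). One small correction: to see that $W \to \Pb^1 \times \Pb(E)$ is quasi-smooth you should factor through $\Pb^1 \times X$ rather than $\Pb^1$ --- neither $V$ nor $\Pb(E)$ is assumed quasi-smooth over the base point, but $W \to \Pb^1 \times X$ is projective and quasi-smooth while $\Pb^1 \times \Pb(E) \to \Pb^1 \times X$ is smooth, which is exactly what the cotangent-complex argument requires.
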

\begin{proof}
Let $\iota_1: \Pb(E) \simeq \Pb(E \oplus 0) \hookrightarrow \Pb(E \oplus E)$ and $\iota_2: \Pb(E) \simeq \Pb(0 \oplus E) \hookrightarrow \Pb(E \oplus E)$ be the natural linear embeddings. As the pushforward morphism $\iota_{1!}$ is injective by Proposition \fref{SummandGivesInjectivePush}, it is enough to show that
$$\iota_{1!} [V \xrightarrow{f_1} \Pb(E)] = \iota_{1!} [V \xrightarrow{f_2} \Pb(E)] \in \PCob^*(\Pb(E \oplus E)).$$
We can now define a surjection $x_0 s_1 + x_1 s_2: E^\vee \vert_{\Pb^1_V} \oplus E^\vee \vert_{\Pb^1_V} \to \Ls (1)$ of vector bundles on $\Pb^1 \times V$ giving rise to an algebraic cobordism
$$\Pb^1 \times V \to \Pb^1 \times \Pb(E \oplus E)$$
showing that
$$\iota_{1!} [V \xrightarrow{f_1} \Pb(E)] = \iota_{2!} [V \xrightarrow{f_2} \Pb(E)] \in \PCob^*(\Pb(E \oplus E)).$$
On the other hand, we can argue similarly using the surjection $x_0 s_2 + x_1 s_2$ that
$$\iota_{1!} [V \xrightarrow{f_2} \Pb(E)] = \iota_{2!} [V \xrightarrow{f_2} \Pb(E)] \in \PCob^*(\Pb(E \oplus E)),$$
so we are done. 
\end{proof}

The following two lemmas show that $j$ and $\Fc$ are isomorphisms.

\begin{lem}\label{jInjLem}
The morphism $j: \colim_n \PCob^*(\Pb(\Ac^{\oplus n})) \to \colim_n \PCob^*(\Pb(\Ac^{\oplus n} \oplus E))$ from diagram (\fref{ApproximationDiagram}) is an isomorphism.
\end{lem}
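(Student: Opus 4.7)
The plan is to establish injectivity and surjectivity separately, using Proposition \ref{SummandGivesInjectivePush} for the former and Lemma \ref{SurjDoesNotMatter} for the latter, with the global generation of $\Ac \otimes E^\vee$ providing the bridge.

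For injectivity: at each level $n$, the map $j_n: \PCob^*(\Pb(\Ac^{\oplus n})) \to \PCob^*(\Pb(\Ac^{\oplus n} \oplus E))$ is the Gysin pushforward along the linear embedding $\Pb(\Ac^{\oplus n}) \hookrightarrow \Pb(\Ac^{\oplus n} \oplus E)$ coming from the first-summand inclusion $\Ac^{\oplus n} \hookrightarrow \Ac^{\oplus n} \oplus E$. By Proposition \ref{SummandGivesInjectivePush} each $j_n$ is injective, and since the transition maps on both sides are also of this form (hence also injective), a standard diagram chase shows that the induced morphism $j$ on filtered colimits is injective.

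For surjectivity, I would pick a generator $\alpha = [V \xrightarrow{f} \Pb(\Ac^{\oplus n} \oplus E)]$, given by a surjection $s: (\Ac^{\oplus n} \oplus E)^\vee \vert_V \to \Ls$ onto some line bundle $\Ls$ on $V$. Because $\Ac \otimes E^\vee$ is globally generated on $X$, we get a surjection $(\Ac^{\oplus R})^\vee \vert_V \to E^\vee \vert_V$; pasting this together with the identity on $(\Ac^{\oplus n})^\vee \vert_V$ yields a surjection $(\Ac^{\oplus n+R})^\vee \vert_V \to (\Ac^{\oplus n} \oplus E)^\vee \vert_V$, whose composition with $s$ gives a surjection $s': (\Ac^{\oplus n+R})^\vee \vert_V \to \Ls$. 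This defines a map $g: V \to \Pb(\Ac^{\oplus n+R})$ and a class $\beta := [V \xrightarrow{g} \Pb(\Ac^{\oplus n+R})]$, which is my candidate preimage.

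To conclude I would compare $j_{n+R}(\beta)$ with the image of $\alpha$ under the transition map $\PCob^*(\Pb(\Ac^{\oplus n} \oplus E)) \to \PCob^*(\Pb(\Ac^{\oplus n+R} \oplus E))$. Both are represented by classes of the form $[V \to \Pb(\Ac^{\oplus n+R} \oplus E)]$ with the same underlying $V$, and both maps are produced from surjections of $(\Ac^{\oplus n+R} \oplus E)^\vee \vert_V$ onto the same line bundle $\Ls$ (one factors through projection to $(\Ac^{\oplus n} \oplus E)^\vee$ followed by $s$, the other through projection to $(\Ac^{\oplus n+R})^\vee$ followed by $s'$). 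By Lemma \ref{SurjDoesNotMatter} these classes coincide in $\PCob^*(\Pb(\Ac^{\oplus n+R} \oplus E))$, so they have the same image in the colimit, exhibiting $\alpha$ as $j(\beta)$.

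The step I expect to require the most care is verifying that the construction of $s'$ really is naturally a surjection of $(\Ac^{\oplus n+R})^\vee \vert_V$ onto the \emph{same} line bundle $\Ls$ (and not merely an isomorphic one), so that Lemma \ref{SurjDoesNotMatter} applies literally rather than only up to a twist. Everything else reduces to formal manipulation: additivity on generators, the commutation of pushforward along linear embeddings with the transition maps defining the two colimits, and the fact that a filtered colimit of injections of abelian groups is injective.
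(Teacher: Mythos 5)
Your proposal is correct and follows essentially the same route as the paper: injectivity comes from Proposition \fref{SummandGivesInjectivePush} (applied levelwise and passed to the filtered colimit), and surjectivity is obtained by composing the given surjection $(\Ac^{\oplus n}\oplus E)^\vee\vert_V \to \Ls$ with the surjection $(\Ac^{\oplus R})^\vee \to E^\vee$ supplied by global generation of $\Ac\otimes E^\vee$, then invoking Lemma \fref{SurjDoesNotMatter} to identify the two resulting classes in $\PCob^*(\Pb(\Ac^{\oplus n+R}\oplus E))$. The detail you flag --- that both surjections target literally the same $\Ls$ --- is automatic here, since one is just the other precomposed with a projection.
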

\begin{proof}
We already know that $j$ is an injection, so it is enough to show that it is a surjection as well. Consider
$$\alpha := [V \to \Pb(\Ac^{\oplus m} \oplus E)] \in \colim_n \PCob^*(\Pb(\Ac^{\oplus n} \oplus E))$$
corresponding to a surjection $(\Ac^{\oplus m})^\vee \vert_V \oplus E^\vee \vert_V \to \Ls$. By construction there exists a surjection $(\Ac^{\oplus R})^\vee \to E^\vee$ so that we can form a composition of surjections
$$(\Ac^{\oplus R + m})^\vee \vert_V \to (\Ac^{\oplus m})^\vee \vert_V \oplus E^\vee \vert_V \to \Ls$$
showing by Lemma \fref{SurjDoesNotMatter} that $\alpha$ is in the image of $j$.
\end{proof}

\begin{lem}\label{PrecobordismOfLinesIsPInfty}
The morphism $\Fc \circ j: \colim_n \PCob^*(\Pb(\Ac^{\oplus n})) \to \PCob^{*,1}(X)$ from diagram (\fref{ApproximationDiagram}) is an isomorphism.
\end{lem}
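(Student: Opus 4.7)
The plan is to reduce the assertion to a result already available in \cite{AY}. The key observation is that projectivization is invariant under tensoring a vector bundle by a line bundle: writing $\Ac^{\oplus n} = \Oc_X^{\oplus n} \otimes \Ac$ gives a canonical equivalence
$$\Pb(\Ac^{\oplus n}) \simeq \Pb(\Oc_X^{\oplus n}) = \Pb^{n-1}_X,$$
under which the anticanonical bundle $\Oc_{\Pb(\Ac^{\oplus n})}(1)$ corresponds to $\Oc_{\Pb^{n-1}_X}(1) \otimes \pi^*\Ac$ (where $\pi$ denotes the structure morphism to $X$). These equivalences are compatible with the linear inclusions $\Pb(\Ac^{\oplus n}) \hookrightarrow \Pb(\Ac^{\oplus n+1})$ and $\Pb^{n-1}_X \hookrightarrow \Pb^n_X$, so passing to colimits we obtain a canonical isomorphism
$$\colim_n \PCob^*(\Pb(\Ac^{\oplus n})) \isomto \colim_n \PCob^*(\Pb^{n-1}_X) =: \PCob^*_{\Pb^\infty}(X).$$

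Next, I would invoke Theorems 6.12--6.13 of \cite{AY}, which produce a canonical isomorphism $\PCob^*_{\Pb^\infty}(X) \isomto \PCob^{*,1}(X)$ sending a cycle $[V \xrightarrow{g} \Pb^{n-1}_X]$ to $[V \to X,\, g^*\Oc_{\Pb^{n-1}_X}(1)]$. Chasing a cycle $[V \to \Pb(\Ac^{\oplus n})]$ through the above identifications and the map $\Fc \circ j$, one sees that $\Fc \circ j$ agrees with this iso followed by the self-map
$$T_\Ac \colon \PCob^{*,1}(X) \to \PCob^{*,1}(X), \qquad [V \xrightarrow{h} X,\, \Ls] \mapsto [V \xrightarrow{h} X,\, \Ls \otimes h^*\Ac]$$
(the twist coming from the extra factor of $\pi^*\Ac$ in $\Oc_{\Pb(\Ac^{\oplus n})}(1)$). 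Clearly $T_\Ac$ preserves the additive structure and admits $T_{\Ac^\vee}$ as an inverse; that it is well defined on $\PCob^{*,1}(X)$ reduces to noting that the double point cobordism relation defining $\PCob^{*,1}$ is preserved under tensoring the line bundle decoration by a fixed line bundle pulled back from $X$ (the cobordism $W \to \Pb^1 \times X$ carries the line bundle $\Mc \otimes p^*\Ac$, where $p\colon W \to X$, and this realizes the twisted relation).

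Combining the two isomorphisms with the bijection $T_\Ac$ shows that $\Fc \circ j$ is an isomorphism. The main technical point --- and the only place one needs to be careful --- is tracking the anticanonical line bundle through the projectivization-invariance equivalence and verifying the compatibility on structure maps of the two colimits; once this bookkeeping is set up, the conclusion is immediate from the cited theorems of \cite{AY}.
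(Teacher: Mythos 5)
Your proposal is correct and follows essentially the same route as the paper: identify $\Pb(\Ac^{\oplus n})$ with $\Pb^{n-1}_X$, track the resulting twist of the anticanonical bundle, invoke the isomorphism $\PCob^*_{\Pb^\infty}(X) \cong \PCob^{*,1}(X)$ from \cite{AY}, and observe that the twist is invertible. The only cosmetic difference is that the paper packages your $T_\Ac$ as multiplication by the unit class $[\Ac] = [X \xrightarrow{\mathrm{Id}} X, \Ac] \in \PCob^{0,1}(X)$ under $\bullet_\otimes$, which makes well-definedness and invertibility immediate.
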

\begin{proof}
Let us denote by $[\Ls] \in \PCob^{0,1}(X)$ the class of the cycle $[X \xrightarrow{\mathrm{Id}} X, \Ls]$. Note that $[\Ls]$ is a unit with inverse given by $[\Ls^\vee]$. Using the natural equivalences $\psi_\Ac: \Pb(\Ac^{\oplus n}) \cong \Pb(\Oc^{\oplus n})$, which on the level of functor of points are given by
$$[(\Ac^{\oplus n})^\vee \to \Ls] \mapsto [\Oc^{\oplus n} \to \Ac \otimes \Ls],$$
we can form the commutative diagram
\begin{equation*}
\begin{tikzcd}
\colim_n \PCob(\Pb(\Ac^{\oplus n})) \arrow[->]{r}{\psi_\Ac} \arrow[->]{d}{\Fc \circ j} & \colim_n \PCob(\Pb(\Oc^{\oplus n})) \arrow[->]{d}{j'} \\
\PCob^{*,1}(X) \arrow[->]{r}{[\Ac] \bullet} & \PCob^{*,1}(X)
\end{tikzcd}
\end{equation*}
The map $j'$ is by construction the isomorphism $\PCob_{\Pb^\infty}^*(X) \to \PCob^{*,1}(X)$ constructed in the Section 6 of \cite{AY}, and as also $\psi_\Ac$ and $[\Ac] \bullet$ are isomorphisms, also $\Fc \circ j$ must be one.
\end{proof}

Let us record the following result, which now is just a combination of the preceding lemmas.

\begin{prop}\label{EmbeddingToTriv}
Let $X$ be a quasi-projective derived scheme over a Noetherian ring $A$, and let $E$ be a vector bundle on $X$. Then the morphism $\iota: \PCob^*(\Pb(E)) \to \PCob^{*,1}(X)$ defined by the formula
$$[V \xrightarrow{f} \Pb(E)] \mapsto [V \xrightarrow{\pi \circ f} X, f^* \Oc(1)],$$
where $\pi$ is the natural projection $\Pb(E) \to X$, is an injection.
\end{prop}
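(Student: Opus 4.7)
The plan is to identify $\iota$ with the composition $\mathcal{F} \circ i$ appearing in diagram (\ref{ApproximationDiagram}) and then assemble injectivity from the pieces already proven. First I would observe that the linear embedding $\iota_n: \Pb(E) \hookrightarrow \Pb(\mathcal{A}^{\oplus n} \oplus E)$ commutes with the projections to $X$, and that the pullback of the anticanonical bundle $\Oc(1)$ along $\iota_n$ is canonically $\Oc(1)$ on $\Pb(E)$ (both come from the canonical surjection of the appropriate dual bundle). Hence, for a cycle $[V \xrightarrow{f} \Pb(E)]$, applying $i$ (which is the Gysin pushforward along $\iota_n$) and then $\mathcal{F}$ yields exactly $[V \xrightarrow{\pi \circ f} X, f^*\Oc(1)]$, i.e.\ $\mathcal{F} \circ i = \iota$.

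Next I would assemble the three facts already proven. By Proposition \fref{SummandGivesInjectivePush} applied to the inclusion of the direct summand $E \hookrightarrow \mathcal{A}^{\oplus n} \oplus E$, the map $i$ is injective. By Lemma \fref{jInjLem}, the map $j$ is an isomorphism, and by Lemma \fref{PrecobordismOfLinesIsPInfty}, the composition $\mathcal{F} \circ j$ is an isomorphism. Combining these two statements forces $\mathcal{F}$ itself to be an isomorphism. Therefore $\iota = \mathcal{F} \circ i$ is the composition of an injection and an isomorphism, which is the desired injection.

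The only step that requires any genuine care is the verification that $\iota$ really factors as $\mathcal{F} \circ i$: one must check that the composite is well-defined independent of the choice of $n$ (which follows from the compatibility of the defining formula of $\mathcal{F}_n$ with the structure morphisms of the colimit) and that the Gysin pushforward along the linear embedding $\iota_n$ is given, on cycle representatives, by simply postcomposing the map to $\Pb(E)$ with $\iota_n$ (which is tautological from the definition of Gysin pushforwards on $\mathcal{M}_+^*$). I do not expect any substantive obstacle, as the content of the proposition is entirely packaged in the two preceding lemmas and Proposition \fref{SummandGivesInjectivePush}.
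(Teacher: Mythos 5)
Your proposal is correct and is exactly the paper's argument: the paper records this proposition as ``just a combination of the preceding lemmas,'' namely that $i$ is injective by Proposition \fref{SummandGivesInjectivePush}, that $j$ and $\Fc\circ j$ are isomorphisms by Lemmas \fref{jInjLem} and \fref{PrecobordismOfLinesIsPInfty} (hence $\Fc$ is one), and that $\iota=\Fc\circ i$. Your additional care about the factorization $\iota=\Fc\circ i$ (compatibility with the colimit and the cycle-level description of the pushforward along the linear embedding) is sound and matches the setup preceding diagram (\fref{ApproximationDiagram}).
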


We have reduced the proof of Theorem \fref{PBF} to understanding the image of $\iota$. Recall that the differentiation operator $\partial_{c_1}$ on $\PCob^{*,1}(X)$ was defined by linearly extending
$$\partial_{c_1} ([V \xrightarrow{f} X, \Ls]) := f_!(c_1(\Ls) \bullet 1_V)$$
so that they are linear over the subring $\PCob^*(X) \hookrightarrow \PCob^{*,1}(X)$ (cycles with trivial line bundles) and satisfy the formula $\partial_{c_1}([\Pb^i, \Oc(1)]) = [\Pb^{i-1}, \Oc(1)]$, where $\Pb^i$ is the empty scheme for $i<0$. Moreover, it is clear that the embedding $\iota$ of Proposition \fref{EmbeddingToTriv} exchanges the action of $c_1(\Oc(1))$ on the source to the action of $\partial_{c_1}$ on the target. We can therefore use the formula (\fref{PbEq2}) of Theorem \fref{ProjectiveBundleChernClass} conclude that if
$$\sum_{i \geq 0} a_i \bullet [\Pb^i, \Oc(1)] \in \PCob^{*,1}(X)$$
lies in the image of $\iota$, then the ``coefficients'' $a_i$ must satisfy the formulas
$$\sum_{j = 0}^r (-1)^{j} c_{r-j}(E^\vee) \bullet a_{n+j} = 0 \in \PCob^*(X)$$
for all $n \geq 0$. Examining the case $n=0$, we see that we are free to choose the coefficients $a_0, a_1, ..., a_{r-1}$, and after that everything is determined. Notice how arbitrary such a choice gives rise to a well defined element of $\PCob^{*,1}(X)$ since the Chern classes of $E^\vee$ are nilpotent, and therefore only finitely many $a_i$ are nonzero. We have therefore shown that \underline{\emph{the image of $\iota$ is contained in a free $\PCob^*(X)$-module of rank $r$}}. To finish off the proof, all that is left to do is to show that the images of $c_1(\Oc(1))^j$ for $0 \leq j \leq r-1$ form a basis.

\begin{proof}[Proof of Theorem \fref{PBF}]
It is enough to show that the elements $1, c_1(\Oc(1)), ..., c_1(\Oc(1))^{r-1}$ form a $\PCob^*(X)$-linear basis for $\PCob^*(\Pb(E))$. We note that the truthfulness of this claim does not change if we twist the vector bundle $E$ by a line bundle $\Ls$. Indeed: the derived scheme $\Pb(\Ls \otimes E)$ is isomorphic to $\Pb(E)$, but its anticanonical line bundle is equivalent to $\Ls^\vee \otimes \Oc(1)$. On the other hand, using (\fref{PbEq2}) and the formal group law of $\PCob^*$, we can conclude that 
$$c_1(\Ls^\vee \otimes \Oc(1)) = a_0 + a_1 \bullet c_1(\Oc(1)) + \cdots + a_{r-1} \bullet c_1(\Oc(1))^{r-1}$$
where $a_1$ is a unit and all the other $a_i$ are nilpotent. Hence $1, c_1(\Oc(1)), ..., c_1(\Oc(1))^{r-1}$ forms an $\PCob^*(X)$-basis if and only if $1, c_1(\Ls^\vee \otimes \Oc(1)), ..., c_1(\Ls^\vee \otimes \Oc(1))^{r-1}$ does.

As $X$ was assumed to be quasi-projective over a Noetherian ring $A$, we can assume w.l.o.g. that $E$ embeds into a trivial bundle $\Oc^{\oplus N}$ so that we get an exact sequence 
$$0 \to E \to \Oc^{\oplus N} \to E'' \to 0$$
whose left hand side gives rise to a linear embedding $i: \Pb(E) \hookrightarrow \Pb^{N-1}_X$. Moreover, note that the embedding $\iota: \PCob^*(\Pb(E)) \to \PCob^{*,1}(X)$ factors through $i_!$ and the embedding $\PCob^*(\Pb^{N-1}_X) \to \PCob^{*,1}(X)$, which, by the results of \cite{AY}, sends $[\Pb^i_X \hookrightarrow \Pb^{N-1}_X]$ to $[\Pb^i, \Oc(1)]$.

Note that by Proposition \fref{LinearEmbeddingIsVanishingLocus} $i_! (1_{\Pb(E)})$ is just the top Chern class $c_{N-r}(E''(1))$. On the other hand, using splitting principle and the formal group law, one can compute that, modulo nilpotent elements coming from $\PCob^*(X)$, 
\begin{align*}
c_{N-r}(E''(1)) &\equiv \sum_{j=0}^{N-r} c_{N-r-j}(E'') \bullet c_1(\Oc(1))^{N-r} \\
&\equiv c_1(\Oc(1))^j \\
&= [\Pb^{r-1}_X \hookrightarrow \Pb^{N-1}_X] \in \PCob^{*}(\Pb_X),
\end{align*}
and therefore $\iota(c_1(\Oc(1))^j) \equiv [\Pb^{r-1-i}, \Oc(1)]$ modulo nilpotent elements of $\PCob^*(X)$. Thus $\iota(c_1(\Oc(1))^i)$, for $0 \leq i \leq r-1$ must form an $\PCob^*(X)$-linear basis for the module of all elements of $\PCob^{*,1}(X)$ that can possibly be in the image of $\iota$ (by the analysis preceding this proof). The injectivity of $\iota$ then shows that they must also form a basis for $\PCob^*(\Pb(E))$, finishing the proof of the theorem.
\end{proof}

\subsection*{Bivariant projective bundle formula for precobordism theories}

Above, we have chosen to restrict to the associated cohomology theory of the universal precobordism theory, since we didn't introduce the bivariant formalism in the introduction. The purpose of this section is to show that Theorem \fref{PBF} can be easily generalized in two directions:
\begin{enumerate}
\item we can replace the universal precobordism rings $\PCob^*(X)$ with any precobordism rings $\Bb^*(X)$ (in other words, we can add relations);
\item instead of proving the projective bundle formula for the cohomology rings $\Bb^*(X)$, we can prove an analogous theorem for all the bivariant groups $\Bb^*(X \to Y)$.
\end{enumerate}
For background on bivariant precobordism theories, the reader can consult \cite{AY} Section 6.

\begin{thm}\label{BivariantPBF}
Let $\Bb^*$ be a bivariant precobordism theory in the sense of \cite{AY}, and suppose we have a morphism $X \to Y$ between derived schemes quasi-projective over a Noetherian ring $A$ of finite Krull dimension. Let $E$ is a vector bundle of rank $r$ on $X$. Then
\begin{enumerate}
\item we have a natural isomorphism of rings
$$\Bb^*(\Pb(E)) \cong \Bb^*(X)[t] / \langle f \rangle,$$
where $t = e(\Oc(1))$ and $f = \sum_{i=0}^r (-1)^i c_{r-i}(E^\vee) t^i$;

\item the morphism
$$\Bb^*(\Pb(E)) \otimes_{\Bb^*(X)} \Bb^*(X \to Y) \to \Bb^*(\Pb(E) \to Y)$$
defined by
$$\alpha \otimes \beta \mapsto \alpha \bullet \theta(\pi) \bullet \beta$$
gives an isomorphism of $\Bb^*(\Pb(E))$-modules, where $\pi$ is the structure morphism $\Pb(E) \to X$.
\end{enumerate}
\end{thm}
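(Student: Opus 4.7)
The overall strategy is to observe that the proofs in Section \fref{ChernClassConsSect} and Section \fref{PBFSubSect} were carried out entirely using cycle-level operations (pullback, pushforward, product) together with relations that are stable under these operations. Consequently, all the prerequisite machinery---Chern classes $c_i(E)$, their nilpotence (Theorem \fref{NilpChernClass}), the splitting principle (Theorem \fref{SplittingPrinciple}), and the key vanishing (\fref{PbEq2})---is automatically available in any precobordism theory $\Bb^*$. For part (1) I would simply replay the proof of Theorem \fref{PBF} with $\Bb^*$ in place of $\PCob^*$: construct the embedding $\iota: \Bb^*(\Pb(E)) \hookrightarrow \Bb^{*,1}(X)$ as in Proposition \fref{EmbeddingToTriv} (Lemmas \fref{SurjDoesNotMatter}, \fref{jInjLem}, and \fref{PrecobordismOfLinesIsPInfty} all go through verbatim, the last using the $\Bb^*$-analog of Theorem 6.12 of \cite{AY}), use (\fref{PbEq2}) to see that the image of $\iota$ lies in a free $\Bb^*(X)$-submodule of rank $r$, and then argue via nilpotence that the images of $1, t, \ldots, t^{r-1}$ generate this submodule.

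For part (2), the plan is to run the same argument bivariantly. There is an evident bivariant embedding
\[
\iota_Y: \Bb^*(\Pb(E) \to Y) \to \Bb^{*,1}(X \to Y),
\]
defined on cycle representatives by $[V \xrightarrow{f} \Pb(E)] \mapsto [V \to X, f^*\Oc(1)]$ (viewing both sides as bivariant over $Y$ via the structure morphism $X \to Y$), and its injectivity should follow formally from the same chain of lemmas provided one invokes the bivariant extension of Theorem 6.12 of \cite{AY}. That latter result identifies $\Bb^{*,1}(X \to Y)$ as a module over $\Bb^*(X \to Y)$ with an explicit power-series description, under which $c_1(\Oc(1))$-multiplication on the source translates to a bivariant differentiation operator on the target. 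Applying relation (\fref{PbEq2}) bivariantly then constrains the image of $\iota_Y$ to a free $\Bb^*(X \to Y)$-submodule of rank $r$, and the nilpotence argument from the proof of Theorem \fref{PBF} produces a basis of the form $\iota_Y(\theta(\pi) \bullet c_1(\Oc(1))^i)$ for $0 \le i \le r-1$. This identifies $\Bb^*(\Pb(E) \to Y)$ with the free $\Bb^*(X \to Y)$-module on $1, t, \ldots, t^{r-1}$, which by part (1) is exactly $\Bb^*(\Pb(E)) \otimes_{\Bb^*(X)} \Bb^*(X \to Y)$.

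The hard part will be verifying that each of the auxiliary lemmas from Section \fref{PBFSubSect} genuinely survives the passage to bivariant groups---particularly the bivariant generalization of Lemma \fref{PrecobordismOfLinesIsPInfty}. I expect this to be largely a matter of bookkeeping, since the bivariant versions are either already recorded, or immediately follow, from Section 6 of \cite{AY}; but one must also check that the cycle-level definition of $\iota_Y$ descends modulo the bivariant double-point relation, and that the orientation class $\theta(\pi)$ interacts correctly with the embedding, both of which should reduce to applying the projection formula and the bivariant-theoretic Remark 6.7 of \cite{AY}.
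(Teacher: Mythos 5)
Your proposal is correct and follows essentially the same route as the paper: the paper likewise constructs the bivariant analogue of the approximation diagram, establishes injectivity/isomorphism of its arrows via bivariant versions of Proposition \fref{SummandGivesInjectivePush} and Lemmas \fref{jInjLem} and \fref{PrecobordismOfLinesIsPInfty} (the latter using the isomorphism $\Bb^{*}_{\Pb^\infty}(X \to Y) \cong \Bb^{*,1}(X \to Y)$ from \cite{AY}), and then reduces both claims to the same linear-recursion analysis as in Theorem \fref{PBF}. The points you flag as needing verification (descent of the cycle-level map through the double-point relation, compatibility with $\theta(\pi)$) are exactly the ones the paper treats as routine.
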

\begin{proof}
The proof of Theorem \fref{PBF} goes through with essentially no changes. The first task is to show that we have a natural embedding $\Bb^*(\Pb(E) \to Y) \hookrightarrow \Bb^{*,1}(X \to Y)$ analogous to the embedding of Proposition \fref{EmbeddingToTriv}. To do this, consider a bivariant version
\begin{equation}\label{ApproximationDiagram2}
\begin{tikzcd}
& \colim_n \Bb^*(\Pb(\Ac^{\oplus n}) \to Y) \arrow[->]{d}{j} \\
\Bb^*(\Pb(E) \to Y) \arrow[->]{r}{i} & \colim_n \Bb^*(\Pb(\Ac^{\oplus n} \oplus E) \to Y) \arrow[->]{d}{\Fc}  \\ 
& \Bb^{*,1}(X \to Y) 
\end{tikzcd}
\end{equation}
of the diagram (\fref{ApproximationDiagram}), where $\Ac$ is a line bundle so that we can find a surjection $(\Ac^{\oplus R})^\vee \to E^\vee$ for $R$ large enough. We note that
\begin{enumerate}
\item the maps $i,j$ induced by bivariant pushforwards are injective by an obvious bivariant analogue of Proposition \fref{SummandGivesInjectivePush};

\item the map $j$ is an isomorphism (cf. Lemma \fref{jInjLem});

\item the map $\Fc \circ j$ is a well defined isomorphism (this is proven exactly like Lemma \fref{PrecobordismOfLinesIsPInfty}, but using the more general isomorphism $\Bb^{*}_{\Pb^\infty}(X \to Y) \cong \Bb^{*,1}(X \to Y)$ constructed in \cite{AY}), and therefore also $\Fc$ is a well defined isomorphism.
\end{enumerate}
We therefore obtain an embedding $\Bb^*(\Pb(E) \to Y) \to \Bb^{*,1}(X \to Y)$ as in Proposition \fref{EmbeddingToTriv}. As in the proof of Theorem \fref{PBF}, we are reduced to understand the set of solutions to a easy linear recursion. The claims 1. and 2. can be proved in a very similar way as Theorem \fref{PBF}.
\end{proof}

\Addresses
\end{document}